\DeclareMathOperator{\Ran}{Ran}
\DeclareMathOperator{\Ker}{Ker}
\DeclareMathOperator*{\slim}{s-lim}
\DeclareMathOperator{\diag}{diag}
\newcommand{\ess}{\text{\rm ess}}
\newcommand{\ac}{\text{\rm (ac)}}
\newcommand{\const}{\text{\rm const}}
\renewcommand\Im{\text{\rm Im}\,}
\renewcommand\Re{\text{\rm Re}\,}
\newcommand{\abs}[1]{\lvert#1\rvert}
\newcommand{\babs}[1]{\pmb\lvert#1\pmb\rvert}
\newcommand{\norm}[1]{\lVert#1\rVert}
\newcommand{\q}{\quad}
\newcommand{\wt}{\widetilde}
\newcommand{\wh}{\widehat}
\newcommand{\R}{{\mathbb R}}
\newcommand{\C}{{\mathbb C}}
\newcommand{\calH}{{\mathcal H}}
\newcommand{\calK}{{\mathcal K}}
\newcommand{\calN}{\mathcal{N}}
\newcommand{\eps}{\varepsilon}
\numberwithin{equation}{section}
\theoremstyle{plain}
\newtheorem{theorem}{\bf Theorem}[section]
\newtheorem{lemma}[theorem]{\bf Lemma}
\newtheorem{proposition}[theorem]{\bf Proposition}
\newtheorem{assumption}[theorem]{\bf Assumption}
\newtheorem{corollary}[theorem]{\bf Corollary}
\theoremstyle{definition}
\newtheorem{definition}[theorem]{\bf Definition}
\theoremstyle{remark}
\newtheorem*{remark*}{\bf Remark}
\let\goth\mathfrak
\let\cal\mathcal
\begin{document}

\title[A multichannel scheme]{A multichannel scheme in smooth scattering theory}
\author{Alexander Pushnitski}
\address{Department of Mathematics,
King's College London,
Strand, London, WC2R~2LS, U.K.}
\email{alexander.pushnitski@kcl.ac.uk}

\author{Dmitri Yafaev}
\address{Department of Mathematics, University of Rennes-1,
Campus Beaulieu, 35042, Rennes, France}
\email{yafaev@univ-rennes1.fr}


\begin{abstract}
We develop the   scattering theory for a pair  of self-adjoint operators $A_{0}=A_{1}\oplus\cdots \oplus A_{N}$ and $A=A_{1}+\cdots +A_{N}$ under the assumption that all pair products $A_{j}A_{k}$ with $j\neq k$ satisfy certain regularity conditions. Roughly speaking, these conditions mean that 
the products $A_{j}A_{k}$, $j\neq k$, can be represented as integral operators with smooth kernels in the spectral representation of the operator $A_{0}$.  We show that the absolutely continuous parts of the operators $A_{0}$ and $A$ are unitarily equivalent. This yields a smooth version of Ismagilov's theorem known earlier in the trace class framework. We also prove that the singular continuous
spectrum of the operator $A$ is empty and that its eigenvalues may accumulate only to  ``thresholds"  of the absolutely continuous
spectra of the operators $A_{j}$. Our approach relies on a system of resolvent equations which can be considered as a  generalization of Faddeev's equations for  three particle quantum systems.
\end{abstract}

\subjclass[2000]{Primary 47A40; Secondary 47B25}

\keywords{Multichannel problem, Fredholm resolvent equations, smoothness, absolutely continuous and singular spectra, wave operators, scattering matrix}

\maketitle

\section{Introduction} \label{sec.a}

\subsection{Trace class and smooth methods}
  The methods used in scattering theory are naturally  divided into two groups:
 trace class and smooth. The fundamental result of the trace class method  is the Kato -- Rosenblum theorem     which establishes the existence   of wave operators for a pair of self-adjoint operators $A_{0}$ and $A$ under the assumption that the perturbation  $A-A_0$ belongs to the trace class ${\goth S}_1$. In particular, the absolutely continuous (a.c.) parts of $A_0$ and $A$ turn out to be unitarily equivalent
to each other. 

D.~Pearson extended the Kato -- Rosenblum theorem to operators $A_{0}$ and $A$ 
acting in different Hilbert spaces.
More precisely, assuming that $A_0$ acts in $\calH_0$, $A$ acts in $\calH$ and $J:\calH_0\to\calH$
is a bounded operator (``identification'') such that the effective perturbation $AJ-JA_0$ is trace class, 
Pearson proved that the wave operators (see definition \eqref{eq:WO} below) 
for the triple $A_0,A,J$ exist.
Earlier, sufficient conditions which ensure 
both\footnote{Under Pearson's assumptions the wave operators are not necessarily isometric.}
the existence and isometricity of wave operators
in the two space setting were found by A.~L.~Belopol$'$ski\u{\i} and M.~Sh.~Birman.

The smooth method in scattering theory (see, e.g., the original papers  \cite{F2, Kuroda} or the book  \cite{Yafaev}) 
is heavily based on the explicit spectral analysis of the unperturbed operator $A_{0}$. This approach requires that the perturbation, 
which we will denote by $A_\infty$, be sufficiently ``regular" (smooth) 
in the spectral representation of the operator $A_{0}$.  
This allows one to deduce information about the operator $A=A_{0} +A_\infty$ from an equation relating the resolvents of the operators $A_{0}$ and $A$.  
Besides the existence and completeness of the wave operators for the pair $A_0$, $A$, 
the smooth method also yields nontrivial information about the singular component of the operator $A$. Normally, one proves that the singular continuous spectrum of $A$ is empty and that the eigenvalues of $A$ have finite multiplicities and can accumulate only to some exceptional spectral points (thresholds) of the operator $A_{0}$.

The standard scheme of smooth   theory works for $\calH_0=\calH$ and  $J=I$. In the present paper we extend it (under rather special assumptions) to the two spaces framework.

\subsection{Ismagilov's theorem and its smooth version}

In \cite{Isma}, R.~S.~Ismagilov found an important generalization of the Kato -- Rosenblum theorem. 
He considered the operator
\begin{equation}
A=A_\infty+A_1+\cdots+A_N 
\label{1.2a}
\end{equation}
where $A_\infty$ and $A_{j}$ are bounded self-adjoint operators in a Hilbert space $\cal H$  
and $A_\infty \in {\goth S}_1$, $A_{j}A_{k} \in {\goth S}_1$ for all $j\neq k$. 
The result of paper  \cite{Isma} can be formulated as follows. 
Consider the operator
\begin{equation}
A_0 =0\oplus A_1\oplus\cdots\oplus A_N 
\label{1.2b}
\end{equation}
in the Hilbert space
$$
\calH_0=\calH^{N+1}
\overset{\text{def}}{=}
\underbrace{\calH\oplus\dots\oplus\calH}_{N+1}
$$
i.e. $\calH_0$ is the direct sum of $N+1$ copies of $\calH$.  
Then  the a.c.\  parts  of the  
operators $A_{0}$ and $A$ are unitarily equivalent to each other. 
The scattering theory for this pair $A_{0}$,  $A$ was constructed later in the papers \cite{Su} by A.~V.~Suslov  and \cite{HoKa} by J.~S.~Howland and T.~Kato where it was shown that the    wave operators for $A_{0}$, $A$ exist and are complete. Strictly speaking, in \cite{Isma, Su,HoKa} the case   $A_\infty=0$, $N=2$ was considered, but the general case reduces easily to this one.

Our goal is to develop the   scattering theory for    the operators $A_{0} $ and $A $ 
given by \eqref{1.2a} and \eqref{1.2b}
under the assumption that $A_\infty$ and all pair products $A_{j}A_{k}$ with $j\neq k$ satisfy conditions of ``smooth" type.  In particular, we show that the a.c. parts of the operators $A_{0}$ and $A$ are unitarily equivalent. This yields a smooth version of Ismagilov's theorem. We also prove that the singular continuous
spectrum of the operator $A$ is empty and that its eigenvalues may accumulate only to 
the thresholds   of the operators $A_{j}$.

The unitary equivalence of the a.c.\  parts of $A_0$, $A$, of course, means that the a.c.\  spectrum of $A$ 
consists of the union of the a.c.\  spectra of $A_1$,\dots, $A_N$, and the a.c.\  subspace of $A$ has the structure of the orthogonal sum of the subspaces corresponding to the operators $A_j$. In the terminology of the multiparticle scattering, each operator $A_j$ contributes its
own channel to the a.c.\  subspace of $A$. Because of this, we refer to the scattering problem 
\eqref{1.2a}, \eqref{1.2b} for $N\geq2$ as a \emph{multichannel problem}, 
and call the case $N=1$ the single channel case.

In the forthcoming paper \cite{PY2} we apply our results to the spectral analysis 
of the operators of 
the type $\theta(H)-\theta(H_{0})$, where $H_{0}$, $H$ is a pair of operators 
for which the assumptions of the smooth scattering 
theory are satisfied, and $\theta$ is a piecewise continuous function.
We are also planning to
apply our results to the spectral theory 
of Hankel operators with piecewise continuous symbols $\theta$. 
In the trace class framework,  the latter problem was considered by J.~S.~Howland in  \cite{Howl0}.
In both cases, we represent the piecewise continuous function $\theta$ as a sum 
\begin{equation}
\theta=\theta_\infty+\theta_1+\cdots+\theta_N,
\label{1.2c}
\end{equation}
where  
$\theta_j$ are certain ``standard'' functions, each of which has only one discontinuity, 
and $\theta_\infty$ is a smooth function. 
Each $\theta_j$ gives rise to a ``simple'' explicitly diagonalisable 
self-adjoint operator $A_j$  and thus \eqref{1.2c} produces the decomposition \eqref{1.2a}  for $A=\theta(H)-\theta(H_{0})$. 

\subsection{A two spaces setup}\label{sec.a2}

Actually, we consider the problem in a more general framework where self-adjoint operators $A_{0}$ and $A$ act in different Hilbert spaces $\calH_{0}$ and $\calH$, respectively,  and a bounded operator (``identification") $J :\calH_{0}\to \calH $ is given. 
 We do not assume $J$ to be either isometric or invertible. 
For simplicity, we suppose that the operators $A_0$ and $A$  are  bounded.
Under some  assumptions, we construct a variant of smooth scattering 
theory for the triple $A_0$, $A$, $J$. 
This generalises the scheme well known for the case
$\calH=\calH_0$, $J=I$. Our assumptions on the ``abstract'' triple $A_0$, $A$, $J$
are rather special and are tailored in such a way that they are satisfied
by the ``concrete'' operators \eqref{1.2a}, \eqref{1.2b} 
and the identification 
\begin{equation}\begin{split}
J:\calH_0 &\to\calH,
\\
\mathbf{f}=(f_\infty,f_1,f_2,\dots,f_N)^\top
&\mapsto 
J\mathbf{f}=f_\infty +f_1+f_2+\dots+f_N.
\end{split}
\label{1.2e}\end{equation}
Our main assumption on the ``abstract'' triple $A_0$, $A$, $J$ is that the effective
perturbation $AJ-JA_0$ can be factorised as 
\begin{equation}
AJ-JA_0=JT 
\label{bb2}
\end{equation}
where $T$ is a bounded operator in $\calH_0$ and the operator $T^2$ is compact. Moreover, the operator  $TA_{0}$ (but not $T$ itself)   possesses some smoothness
properties with respect to $A_0$. 

  For the triple \eqref{1.2a}, \eqref{1.2b}, \eqref{1.2e}, 
it can be easily checked by a direct inspection that
factorisation \eqref{bb2} holds with  the operator $T$    given in the space 
$\calH_0 =\cal H^{N+1}$ by the matrix
\begin{equation}
T=  
\begin{pmatrix}
{A_\infty}& {A_\infty}& {A_\infty}&\ldots& {A_\infty}
\\
A_1& 0&A_1&\ldots&  A_{1}
 \\
 A_{2}& A_2 &0&\ldots&  A_{2}
 \\
 \vdots&  \vdots&\vdots& \ddots&\vdots &
  \\
 A_{N}& A_{N} & A_N&\ldots  & 0
 \end{pmatrix} .
\label{eq:AA}
\end{equation}

The importance of the factorisation \eqref{bb2} is as follows.
Subtracting $zJ$ from both sides, we get
$
(A-z)J=J(A_0-z)+JT
$
for any $z\in\C$. 
From here, using the notation
\begin{equation}
R(z)=(A-z)^{-1}, \quad R_0(z)=(A_0-z)^{-1},
\label{1.2h}
\end{equation}
we immediately obtain the resolvent identity
\begin{equation}
R(z)J(I+TR_0(z))=JR_0(z),
\quad 
\Im z\not=0.
\label{bb3}
\end{equation}
The key point of our construction is that we consider
\eqref{bb3} as an equation for $R(z)J$ (rather than for $R(z)$). 
Under our assumptions,   \eqref{bb3} turns out to be a Fredholm equation. 
Using this fact, we establish   the limiting absorption principle for $A$. 
That is, we prove that $R(z)J$ has boundary values in an appropriate sense when 
$z$ approaches the real axis staying away from the eigenvalues of $A$. Roughly speaking, our analysis   hinges on 
inverting the operator $I+TR_0(z)$ in \eqref{bb3}. 
Note that this operator acts in  $\calH_0$, and so 
we are able to carry out much of the analysis in the framework of a single
Hilbert space $\calH_0$ rather than in a pair of spaces $\calH_0$, $\calH$.

Analytic results obtained in this way allow us to verify the assumptions  of smooth scattering theory for the operators $A_{0}$,  $A$ and the auxiliary identification  $\wt{J}= J A_{0} $. Note that the identification $\wt J$ was used previously in \cite{HoKa} for the construction of the scattering  theory for the pair \eqref{1.2a}, \eqref{1.2b} under Ismagilov's assumptions. We work in the framework of a local version of scattering
theory, that is, all our smoothness assumptions are made for the 
spectral parameter $\lambda$ confined to an open bounded interval $\Delta\subset\R$
with $0\notin\Delta$. We first construct local wave operators for the triple $A_0$, $A$, $\wt{J}$ and the  interval $\Delta$.
Then using that $0\not\in\Delta$, we replace $\wt{J}$ by the original identification $J$.
We show that the  local wave operators for the triple $A_0$, $A$, $J$ are isometric and complete. By the standard density arguments, global spectral results 
can be easily derived from the local ones, if necessary. We also show that the singular continuous
spectrum of the operator $A$ is empty and that its eigenvalues in $\Delta$ do not have interior points of accumulation.

It is important that all our results for the ``abstract'' triple $A_0$, $A$, $J$ 
apply to the ``concrete'' triple 
\eqref{1.2a}, \eqref{1.2b}, \eqref{1.2e}.

\subsection{Comparison with a three particle scattering problem}

Let us discuss the analogy between \eqref{1.2a} and   the three
particle Hamiltonian
\begin{equation}
H=H_{0}+ V_{1}+V_{2}+V_{3}
\quad
\text{ in $L^2(\R^{2d })$, $d\geq1$.}
\label{1.2i}
\end{equation}
Here $H_{0}=-\Delta$ is the operator of the total kinetic energy 
and  $V_{j}$ are potentials of interaction of pairs of particles 
(for example, $V_{1}$ corresponds to the interaction of the second and the third particles);
the motion of the center of mass is removed.
The  potentials $V_j$ do  not decay at infinity, but the products $V_{j}V_{k}$ with $j\neq k$  possess this property.  Thus, we have a formal analogy between \eqref{1.2a} and \eqref{1.2i} if $H_0=0$. 
In fact, this analogy goes further. 
Recall that every two particle Hamiltonian $H_{j}=H_{0}+ V_j$ yields its own  
channels of scattering to the  three particle system 
(provided the corresponding two particle subsystem has a point spectrum).
Similarly, in the problem \eqref{1.2a},
every operator $A_{j}$ contributes its own band of the a.c.\  spectrum 
to the spectrum of the operator $A$.
Furthermore,   the resolvent equation \eqref{bb3} is  algebraically similar to 
the famous Faddeev's equations \cite{Fadd} for the three particle quantum system. This is discussed in the Appendix.

Nevertheless, our problem preserves many   features of the two particle scattering. 
Indeed, the two particle scattering matrix  differs from the identity operator by a compact operator, 
and a result of this type remains true for the pair \eqref{1.2a}, \eqref{1.2b}.
This should be compared with the fact that, as observed by R.~Newton,  the singularities of the three particle scattering matrix are determined by the scattering matrices for all  two particle Hamiltonians $H_{j}$; see \cite[Section~14.2]{LNM}  for a discussion of this phenomenon.
In particular, the  scattering matrix  minus the identity operator is not compact in the three particle case.

\subsection{The structure of the paper} 
The basic definitions of scattering theory are formulated in the   abstract framework in Section~\ref{sec.b}.
Our  main results concerning the pair \eqref{1.2a}, \eqref{1.2b} are stated in Section~\ref{sec.bb} under Assumption~\ref{assu}. Section~\ref{sec.ccs} plays the central role. Here we ``forget'' about the nature of the operators $A_0$, $A$, $J$,  formulate a list of hypotheses (Assumption~\ref{ass2})
on an ``abstract'' triple $A_0,A,J$ and then   develop a version of smooth scattering theory.
  In short Section~\ref{sec.f}, we show that under Assumption~\ref{assu} these hypotheses   are satisfied for the triple $A_{0}$, $A$ and $J$ 
defined by \eqref{1.2a}, \eqref{1.2b} and \eqref{1.2e}. Then we
translate the ``abstract'' results of Section~\ref{sec.ccs}  back into the setting of the
operators $A_j$, $A$.   
In Section~\ref{sec.e} we briefly discuss  
the stationary representations for the scattering matrix and the wave operators.

 \section{Scattering theory in a two spaces setting}\label{sec.b}    
         
In this preliminary section, we collect the required general definitions and results 
from scattering theory. For the details, see, e.g.,   the book \cite{Yafaev}.

\subsection{Wave operators}

Let $A_{0}$ (resp.\ $A$) be a bounded self-adjoint operator 
in a Hilbert space ${\cal H}_{0}$ (resp.\ ${\cal H}$). 
Throughout the paper, 
we denote by $E_{0}(\cdot)$ and $E(\cdot)$ 
the projection valued spectral measures of $A_0$ and $A$
and by $P_{0}^{\ac}$ and $P^{\ac}$ 
the orthogonal projections onto the a.c.\  subspaces ${\cal H}_{0}^{\ac}$ and ${\cal H}^{\ac}$ of these operators, and set
$$
E_{0}^{\ac}(\Delta)=E_{0}(\Delta) P_{0}^{\ac}, 
\quad
E^{\ac} (\Delta  )= E (\Delta  )  P^{\ac}
$$
for $\Delta\subset\R$. 
We also use the notation $R_0(z)$, $R(z)$ for the resolvents of 
$A_0$, $A$, see \eqref{1.2h}.
We denote by $ \sigma_\ess (A)$ and
$ \sigma_p(A)$   the essential and point spectra of a self-adjoint operator $A$.
The class of   compact operators is denoted by ${\goth S}_{\infty}$.

The wave operators for the operators $A_{0}$,  $A$, a bounded operator $J:  {\cal H}_{0}  \to  {\cal H}$ and a bounded open  interval $\Delta$ are defined   by the relation
 \begin{equation}
W_{\pm} (A,A_{0}; J , \Delta) = \slim_{t\to\pm\infty} e^{iAt}J e^{- iA_{0}t}E_{0}^{\ac}(\Delta),
\label{eq:WO}\end{equation}
 provided this strong limit exists. It is easy to see that under the assumption of its existence the wave operator possesses the intertwining property
  \begin{equation}
W_{\pm} (A,A_{0}; J  , \Delta ) A_{0}= A W_{\pm} (A,A_{0};
J ,  \Delta ). 
\label{eq:Int}\end{equation}

We need the following elementary assertion.

\begin{lemma}\label{iso}
Suppose that the wave operator \eqref{eq:WO} exists.
 If for  a real valued function $\varphi$
\begin{equation}
  J^* J -\varphi (A_{0})  \in{\goth S}_{\infty},
\label{eq:W8h}\end{equation}
then
\begin{equation}
W_{\pm}^* (A,A_{0};  J , \Delta ) W_{\pm} (A,A_{0}; J, \Delta ) =\varphi (A_{0}) E_{0}^{\ac} (\Delta).
\label{eq:W8m}\end{equation} 
In particular, if 
$ J^* J- I \in{\goth S}_{\infty}$,
then the operator $W_{\pm} (A,A_{0}; J,\Delta)$ is isometric on the subspace 
$\Ran E_{0}^{\ac}(\Delta)$. 
\end{lemma}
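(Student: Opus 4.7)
The plan is to reduce the claim to a computation of the quadratic form $\langle W_\pm^* W_\pm f, f\rangle$ and then use polarisation. Specifically, for $f\in\calH_0$, write $g=E_0^{\ac}(\Delta)f$ and compute
$$
\norm{W_\pm(A,A_0;J,\Delta)f}^2=\lim_{t\to\pm\infty}\langle J^*J\, e^{-iA_0 t}g,\,e^{-iA_0 t}g\rangle,
$$
which is valid because the strong limit in \eqref{eq:WO} is assumed to exist.

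Next I would decompose $J^*J=\varphi(A_0)+K$ with $K\in\goth S_\infty$, as permitted by \eqref{eq:W8h}. The contribution of $\varphi(A_0)$ is $t$-independent since $\varphi(A_0)$ commutes with $e^{-iA_0t}$, giving exactly $\langle\varphi(A_0)g,g\rangle=\langle\varphi(A_0)E_0^{\ac}(\Delta)f,f\rangle$. So everything hinges on showing that the remainder $\langle K e^{-iA_0 t}g,\,e^{-iA_0 t}g\rangle$ vanishes as $|t|\to\infty$.

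For this, I would use the standard fact that $e^{-iA_0 t}g\to 0$ weakly as $|t|\to\infty$ for any $g\in\calH_0^{\ac}$; this is the Riemann--Lebesgue lemma applied to the absolutely continuous scalar measures $d\langle E_0(\lambda)g,h\rangle$. Since $K$ is compact, it sends this weakly null net to a strongly null one, so $\norm{Ke^{-iA_0 t}g}\to 0$, and the inner product in question vanishes. Combining the two contributions and polarising over $f$ yields \eqref{eq:W8m}. The final assertion follows by taking $\varphi\equiv1$, in which case $\varphi(A_0)E_0^{\ac}(\Delta)=E_0^{\ac}(\Delta)$, i.e.\ the identity on $\Ran E_0^{\ac}(\Delta)$.

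I do not anticipate a genuine obstacle: the only non-routine ingredient is the weak-to-strong passage for compact operators combined with Riemann--Lebesgue, and both are textbook. The one point that deserves care is the justification that expanding $\norm{W_\pm f}^2$ under the limit is legitimate, which is immediate from continuity of the inner product together with strong convergence of $Je^{-iA_0 t}E_0^{\ac}(\Delta)f$.
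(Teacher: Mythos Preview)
Your proposal is correct and follows essentially the same route as the paper: both compute $\norm{W_\pm f}^2$ by writing $J^*J=\varphi(A_0)+K$ with $K$ compact, observe that the $\varphi(A_0)$ term is $t$-independent, and use that $Ke^{-iA_0t}g\to 0$ strongly for $g\in\calH_0^{\ac}$. The paper's proof is terser---it invokes the vanishing of $\norm{Ke^{-iA_0t}f}$ without spelling out the Riemann--Lebesgue/compactness mechanism, and leaves the polarisation step implicit---but the argument is the same.
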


\begin{proof}
Observe that
\begin{equation}
\| J e^{- iA_{0}t} f\|^2=  (( J^* J-\varphi (A_{0})) e^{- iA_{0}t} f,e^{- iA_{0}t} f) + (\varphi (A_{0})f,f).
\label{eq:jj}\end{equation} 
Let $f \in \Ran E_{0}^{\ac}(\Delta)$ and $t\to\pm\infty$. Then the l.h.s.\  of \eqref{eq:jj} tends to
$\|W_{\pm} (A,A_{0}; J,\Delta) f\|^2$. By assumption \eqref{eq:W8h}, $\|Ê( J^* J-\varphi (A_{0})) e^{- iA_{0}t} f\|\to 0$ as $| t| \to \infty$, and hence the first term in the r.h.s.\  of  \eqref{eq:jj} tends to zero as $| t| \to \infty$. Therefore passing in \eqref{eq:jj}  to the limit $t\to\pm\infty$, we get \eqref{eq:W8m}.
\end{proof}

The isometric wave  operator $W_{\pm} (A,A_{0}; J ,\Delta)$ is called  complete if
$$
\Ran  W_{\pm}  (A,A_{0}; J ,\Delta )
=
\Ran E^{\ac} (\Delta).
$$
If both wave operators $W_{\pm} (A,A_{0}; J, \Delta )$ and $W_{\pm} ( A_{0}, A; J^*  , \Delta )$ exist, then they are adjoint to each other.
 
By the usual density arguments, if the local wave operators $W_\pm(A,A_0;J,\Delta_n)$ 
exist,  are isometric and complete for a collection of intervals $\{\Delta_n\}$ such that $\R\setminus(\cup_n\Delta_n)$
has the Lebesgue measure zero, then the global wave operators ($\Delta=\R$) also exist,  are isometric and complete. 
We state and prove all our results in the local setting, bearing in mind that the corresponding global results
follow automatically.

\subsection{Relative smoothness}

Let us discuss sufficient conditions for the existence of the wave operators
in the framework of the smooth scattering theory.
Let $A_0$ be a self-adjoint operator in a Hilbert space $\calH_{0}$, and let
$Q_{0}$ be a bounded operator acting from $\calH_{0}$ to another Hilbert space $\calK$ 
(of course, the case $\calK=\calH_{0}$ is not excluded).    
One of the equivalent definitions of the $A_{0}$-smoothness 
of $Q_{0}$ on an interval  $\delta$ in the sense of Kato is given by the condition
$$
\sup_{\lambda\in \delta, \varepsilon\neq 0}
\| Q_{0}  ( R_{0} (\lambda+i\varepsilon) - R_{0} (\lambda-i\varepsilon))Q_{0} ^* \| <\infty, 
\quad R_{0}(z) =(A_{0}-z)^{-1}.
$$
Let us state the Kato-Lavine theorem. 
 
\begin{proposition}\label{KL}
Assume that the factorisation
$$
A J - J A_{0}=Q^*  Q_{0} 
$$
with bounded operators
$Q_0:\calH_0\to\calK$
and $Q:\calH\to\calK$ holds. 
Assume that  the  operators  $Q_{0}$ and $Q$ are smooth 
in the sense of  Kato  relative to $A_{0}$ and $A$, respectively, on every compact subinterval $\delta$ of an interval  $\Delta$. Then the wave operators $W_{\pm} (A,A_{0}; J  , \Delta )$ and $W_{\pm} ( A_{0}, A; J^*  , \Delta )$ exist and are adjoint to each other.
         \end{proposition}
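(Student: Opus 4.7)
This is a classical Kato--Lavine-type statement; I would follow Cook's method combined with Kato's $L^2$-in-time smoothness criterion. Using $\|e^{iAt}Je^{-iA_0t}\|\le\|J\|$ together with the density of $\bigcup_\delta E_0^{\ac}(\delta)\calH_0$ in $E_0^{\ac}(\Delta)\calH_0$ (union over compact $\delta\subset\Delta$), it is enough to establish the strong limit on each $E_0^{\ac}(\delta)\calH_0$. For $f$ there, differentiation of $W(t):=e^{iAt}Je^{-iA_0 t}$ together with the factorisation $AJ-JA_0=Q^*Q_0$ gives Cook's identity
\[
W(t_2)f-W(t_1)f=i\int_{t_1}^{t_2}e^{iA\tau}Q^*Q_0 e^{-iA_0\tau}f\,d\tau.
\]

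Pairing with $g\in\calH$ and applying the Schwarz inequality in $\tau$ yields
\[
|\langle W(t_2)f-W(t_1)f,g\rangle|^2\le\Big(\int_{t_1}^{t_2}\|Q_0 e^{-iA_0\tau}f\|^2 d\tau\Big)\Big(\int_{t_1}^{t_2}\|Q e^{-iA\tau}g\|^2 d\tau\Big).
\]
By the well-known equivalent formulation of Kato smoothness on $\delta$, the first factor admits the global bound $\int_\R\|Q_0 e^{-iA_0\tau}f\|^2 d\tau\le C\|f\|^2$ with vanishing tails as $t_1,t_2\to\pm\infty$. For any compact $\delta'\subset\Delta$ with $\delta\subset\delta'$ and any $g\in E(\delta')\calH$, smoothness of $Q$ on $\delta'$ bounds the second factor uniformly, proving the scalar Cauchy criterion for $g\mapsto\langle W(t)f,g\rangle$ on the dense subspace $\bigcup_{\delta'}E(\delta')\calH\subset E(\Delta)\calH$.

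The principal obstacle is upgrading this to strong Cauchy convergence in $\calH$. I would decompose $W(t_2)f-W(t_1)f=E(\delta')[\,\cdot\,]+(I-E(\delta'))[\,\cdot\,]$: the first summand is handled directly by the estimate above. For the orthogonal complement one chooses $\phi\in C_c(\delta')$ with $\phi\equiv1$ on $\delta$, so that $\phi(A_0)f=f$ and $(I-E(\delta'))\phi(A)=0$; the remaining term then reduces to the time integral of $(I-E(\delta'))\bigl[Q^*Q_0\phi(A_0)-\phi(A)Q^*Q_0\bigr]e^{-iA_0\tau}f$, which via the resolvent identity $J(A_0-z)^{-1}-(A-z)^{-1}J=(A-z)^{-1}Q^*Q_0(A_0-z)^{-1}$ can be reduced to another layer of smoothness estimates on $\delta'$. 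The symmetric argument (exchanging $A_0,Q_0,J\leftrightarrow A,Q,J^*$) produces $W_\pm(A_0,A;J^*,\Delta)$; adjointness then follows by passing to the limit in the identity $\langle e^{iAt}Je^{-iA_0t}f,g\rangle=\langle f,e^{iA_0t}J^*e^{-iAt}g\rangle$, combined with the intertwining relation \eqref{eq:Int} to ensure $\Ran W_\pm(A,A_0;J,\Delta)\subset E^{\ac}(\Delta)\calH$ so that the $E^{\ac}$-projection inside the adjoint limit is harmless.
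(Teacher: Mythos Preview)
The paper does \emph{not} supply a proof of this proposition: it is quoted as the Kato--Lavine theorem, with the implicit reference to \cite{Yafaev}. So there is nothing in the paper to compare your argument against; your proposal has to be judged on its own merits.

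Your Cook's-method skeleton is correct, and you are right to flag the central difficulty: with only \emph{local} $A$-smoothness of $Q$ (on compact $\delta'\subset\Delta$), the dual bound
$\sup_{\|g\|=1}\int_{t_1}^{t_2}\|Qe^{-iA\tau}g\|^2\,d\tau<\infty$
uniformly in $t_1,t_2$ is \emph{not} available, so the naive Cauchy--Schwarz step only controls $E(\delta')[W(t_2)f-W(t_1)f]$, not the full norm. That is a genuine obstacle, not a technicality.

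The gap is in your proposed cure for the $(I-E(\delta'))$-piece. Inserting $\phi\in C_c(\delta')$ with $\phi\equiv1$ on $\delta$ and using $(I-E(\delta'))\phi(A)=0$ rewrites
\[
(I-E(\delta'))\,e^{iA\tau}Q^*Q_0\,e^{-iA_0\tau}f
=(I-E(\delta'))\,e^{iA\tau}\bigl[Q^*Q_0\phi(A_0)-\phi(A)Q^*Q_0\bigr]e^{-iA_0\tau}f,
\]
but a direct computation shows
$Q^*Q_0\phi(A_0)-\phi(A)Q^*Q_0=AJ'-J'A_0$ with $J'=J\phi(A_0)-\phi(A)J$. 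In other words, you have replaced the Cook integral for $J$ by the identical Cook integral for the new identification $J'$, and you are back where you started. Expressing $J'$ through the resolvent identity
$J R_0(z)-R(z)J=R(z)Q^*Q_0R_0(z)$ does not break the circle: you would again need a factorisation $AJ'-J'A_0=(Q')^*Q_0'$ with $Q'$ globally (or at least uniformly) $A$-smooth, and nothing in your sketch produces that. The phrase ``another layer of smoothness estimates on $\delta'$'' hides exactly the missing ingredient.

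The argument in \cite{Yafaev} avoids this trap; if you want a self-contained time-dependent proof, one clean route is to show first that $E(\delta')W(t)E_0(\delta)$ converges strongly for every pair of compact $\delta,\delta'\subset\Delta$ (your estimate does this), prove the symmetric statement for $E_0(\delta)W(t)^*E(\delta')$, and then use these partial limits together with an $\varepsilon/3$-argument and the uniform bound $\|W(t)\|\le\|J\|$ to assemble the full strong limit. Your adjointness paragraph at the end is fine once existence is secured.
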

         
Usually the smoothness of $Q_{0}$ with respect to the ``unperturbed" operator $A_{0}$ can be verified directly. 
A proof of $A$-smoothness of the operator $Q$ is more complicated. 
If $\calH_0=\calH$,  $J=I$ and $Q=BQ_{0}$ with a bounded operator $B$, 
then in applications the $A$-smoothness of $Q$  is 
often  deduced  from a sufficiently strong form  of $A_{0}$-smoothness of the operator $Q_{0}$ 
combined with some compactness arguments. This is  discussed in in Subsection~2.3.
           
Let us formulate a convenient strong form of relative smoothness. 
Suppose  that  the spectrum of the operator $A_{0}$ on $\Delta$ is a.c.\  and  has a constant (possibly infinite) multiplicity $k$. We consider   a unitary mapping  
\begin{equation}
 {  F }_{0}: \Ran E_{0} (\Delta) \rightarrow L^2(\Delta;{\goth h}_{0})
 =L^2(\Delta)\otimes{\goth h}_{0} , \q \dim {\goth h}_{0}=k,
\label{eq:DIntF}\end{equation}
of $ \Ran E_{0} (\Delta)$
onto the space of vector-valued functions of  $\lambda\in \Delta$  
with values in an auxiliary Hilbert space ${\goth h}_{0}$.  Assume that $F_0$ 
maps $A_0$ to the  operator   of multiplication  by $\lambda$, that is,
\begin{equation}
({  F }_{0} A_{0} f)(\lambda)=\lambda ({  F }_{0}   f)(\lambda), \q
f\in \Ran E_{0} (\Delta).
 \label{eq:FF1}\end{equation}
We denote by $\babs{\cdot}$ the norm in $\goth h_0$. Note that 
\begin{equation}
\frac{d(E_{0}(-\infty,\lambda)f,f)}{d\lambda}=\babs{F_{0} (\lambda) f}^2
\label{eq:FF}\end{equation}
for all $f\in \Ran E_{0} (\Delta)$ and almost all $\lambda\in\Delta$.
Along with
$L^2(\Delta;{\goth h}_{0})$ we consider the space 
$C^\gamma(\Delta;{\goth h}_{0})$, $\gamma\in (0,1]$, of H\"older
continuous vector-valued functions.
  
\begin{definition}\label{strsm}
 A bounded   operator $Q_{0} :     {\cal H}_{0}  \to \calK$ is called strongly $A_{0}$-smooth (with an exponent
$\gamma \in(0,1]$) on    $  \Delta$ 
  if there exists a unitary diagonalization $F_{0}$ of $A_{0}|_{\Ran E_{0} (\Delta)}$ 
  such that the  operator ${  F}_{0}   Q_{0}^*  $ maps  $\calK$   continuously into
$C^\gamma (\Delta; {\goth h}_{0})$, i.e.,  
$$
 {\pmb|} ( {  F}_{0}   Q_{0} ^*  g)(\lambda){\pmb|} \leq C\|g\| ,\quad
 {\pmb|} ({  F}_{0}    Q_{0} ^*  g )(\lambda)- ( {  F}_{0}   Q_{0}  ^*  g )(\mu){\pmb|} \leq C |\lambda-\mu|^\gamma \| g \|.
$$
Here the constant $C$ does not depend on $\lambda$ and $\mu$ in  compact subintervals  of  $\Delta$.
\end{definition}

 For a strongly $A_{0}$-smooth operator $Q_{0} $, the operator 
 $Z_{0} (\lambda;  Q_{0} ):\calK\rightarrow{\goth h}_{0}$,
defined by the relation
\begin{equation} 
Z_{0} (\lambda; Q_{0} ) g =({  F}_{0}  Q_{0} ^*  g )(\lambda),
\label{eq:ZG}\end{equation}
  is bounded and depends H\"older continuously on $\lambda\in \Delta$. 
  According to \eqref{eq:FF} and \eqref{eq:ZG} we have
 \begin{equation} 
  \frac{d(Q_{0} E_{0}(-\infty,\lambda)Q_{0} ^*  g , g)}{d\lambda}=\babs{ Z_{0} (\lambda; Q_{0} ) g }^2
 \label{eq:ZG1}\end{equation}
 so that, for a strongly $A_{0}$-smooth operator $Q_{0} $, this expression  depends H\"older continuously on $\lambda\in \Delta$. 
Therefore the following result is a direct consequence (see, e.g., \cite{Yafaev}) of the spectral theorem for $A_{0}$ and the Privalov lemma.
  
\begin{proposition}\label{Privalov}
If an   operator $Q_{0} $ is   strongly $A_{0}$-smooth   on $\Delta$ with  some exponent $\gamma\in (0,1)$, then the  operator-valued function $Q_{0}R_0(z) Q_{0} ^*  $ is
H\"older continuous in the operator norm 
$($with the same exponent $\gamma)$ for $\Re z\in \Delta$ and $\pm\Im z\geq 0$. 
\end{proposition}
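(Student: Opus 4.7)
The plan is to convert $Q_0 R_0(z) Q_0^*$ into an operator-valued Cauchy integral of a H\"older continuous density on $\Delta$, and then appeal to an operator-valued version of the classical Privalov lemma to obtain H\"older continuous extension up to the real axis.

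First I would split the resolvent as $R_0(z) = R_0(z) E_0(\Delta) + R_0(z)(I - E_0(\Delta))$. Since $A_0$ restricted to $\Ran(I - E_0(\Delta))$ has spectrum in $\R \setminus \Delta$, the operator $Q_0 R_0(z)(I - E_0(\Delta)) Q_0^*$ is holomorphic, in the operator norm, on a complex neighbourhood of any compact subinterval of $\Delta$; hence it is Lipschitz, and a fortiori H\"older-$\gamma$, there. This reduces matters to the singular part $Q_0 R_0(z) E_0(\Delta) Q_0^*$.

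For the singular part, the diagonalization $F_0$ together with \eqref{eq:FF1} and the polarised form of \eqref{eq:ZG1} yields, for all $g, h \in \calK$,
\begin{equation*}
(Q_0 R_0(z) E_0(\Delta) Q_0^* g,\, h) = \int_{\Delta} \frac{(Z_0(\lambda; Q_0) g,\, Z_0(\lambda; Q_0) h)_{{\goth h}_0}}{\lambda - z}\, d\lambda.
\end{equation*}
Setting $T(\lambda) := Z_0(\lambda; Q_0)^* Z_0(\lambda; Q_0)$, which is a bounded operator on $\calK$, Definition~\ref{strsm} supplies both a uniform bound on $\|Z_0(\lambda; Q_0)\|$ and H\"older-$\gamma$ continuity of $\lambda \mapsto Z_0(\lambda; Q_0)$ in operator norm on compact subintervals of $\Delta$; together these give that $\lambda \mapsto T(\lambda)$ is itself H\"older-$\gamma$ in operator norm there. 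The displayed identity then rewrites as the operator-valued Cauchy integral $Q_0 R_0(z) E_0(\Delta) Q_0^* = \int_{\Delta} T(\lambda)(\lambda - z)^{-1}\, d\lambda$.

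The last step is to run the classical Privalov subtraction argument for this operator-valued Cauchy integral. Given $z, z'$ with $\Re z, \Re z'$ in a common compact subinterval $K \subset \Delta$ and $\pm \Im z,\, \pm \Im z' \geq 0$, I would pick a reference point $\lambda_0 \in K$ near $\tfrac{1}{2}(\Re z + \Re z')$, split $T(\lambda) = (T(\lambda) - T(\lambda_0)) + T(\lambda_0)$, and estimate the two contributions separately. The contribution of $T(\lambda) - T(\lambda_0)$ is controlled by the H\"older bound on $T$ and the standard integral estimate for $\int |\lambda - z|^{-1} |\lambda - \lambda_0|^{\gamma}\, d\lambda$; the contribution of $T(\lambda_0)$ reduces to a logarithmic increment of $\int_\Delta (\lambda - z)^{-1}\, d\lambda$, which is H\"older of every exponent $< 1$ in $z$. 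Since only operator norms of $T$ appear in these estimates, the scalar proof transfers verbatim. The only point requiring care is uniformity of the estimates on half-strips $\{\Re z \in K,\ 0 \leq \pm \Im z \leq 1\}$, which is precisely what the classical Privalov argument delivers; this is the mild technical obstacle, but no genuinely new idea beyond transcribing Privalov is needed.
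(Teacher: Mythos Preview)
Your proposal is correct and follows exactly the route the paper indicates: the paper does not give a detailed proof but states that the result ``is a direct consequence \dots\ of the spectral theorem for $A_0$ and the Privalov lemma'' with a reference to \cite{Yafaev}, which is precisely your decomposition into a Cauchy-type integral via the diagonalisation $F_0$ followed by the classical Privalov subtraction argument. Your write-up simply supplies the details the paper omits.
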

  
According to this result the strong $A_{0}$-smoothness   of an  operator $Q_{0} $  implies its $A_{0}$-smoothness in the sense of Kato on each  compact subinterval $\delta$ of  the  interval $\Delta$.  
    
We will also need the following result (see, e.g., \cite[Theorem~1.8.3]{Yafaev}). 
It is known as  the analytic Fredholm alternative.  
  
\begin{proposition}\label{Fred}
Let ${\cal H}_{0}$ be a Hilbert space, and let $\Delta$ be an open interval.
Suppose that  the operator-valued function  $G_{0}(z):{\cal H}_{0} \to {\cal H}_{0}$ defined on the set $\Re z\in\Delta$, $\Im z\neq 0$,  is analytic,  the operators $G_{0}(z)^p$ are compact for 
some natural $p$ and the point $-1$ is not an eigenvalue of  the operators $G_{0}(z)$. Assume also  that  $G_{0}(z)$ is continuous up to the cut along the real axis. Then $(I+ G_{0}(z))^{-1}$ is a   continuous operator-valued function of $z$ for $\pm \Im z\geq 0$, $\Re z\in\Delta$, away from a closed set ${\cal N}_{\pm} \subset \Delta$ of measure zero. The set  ${\cal N}_{\pm}$ consists of the points $\lambda\in \Delta$ where  the   equation 
$$
 g+  G_{0}(\lambda\pm i0)  g=0
$$
has a nontrivial solution $ g\in  {\cal H}_{0}$.      Moreover,  $(I+ G_{0}(z))^{-1}$ is    H\"older continuous if $G_{0}(z) $ is    H\"older continuous.
 \end{proposition}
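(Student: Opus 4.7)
The plan is to establish four things in turn: invertibility and analyticity of $(I+G_0(z))^{-1}$ in the open region $\{\Re z\in\Delta,\Im z\neq 0\}$; continuity (and H\"older continuity) up to the boundary at points of $\Delta\setminus{\cal N}_\pm$; closedness of ${\cal N}_\pm$; and, the main difficulty, that ${\cal N}_\pm$ has Lebesgue measure zero.

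For the first step I would use the algebraic identity
\[
(I+G_0(z))\sum_{k=0}^{p-1}(-G_0(z))^k = I - (-G_0(z))^p
\]
to realize $I+G_0(z)$ as a Fredholm operator of index zero, since $(-G_0(z))^p\in{\goth S}_\infty$. The hypothesis $-1\notin\spec(G_0(z))$ makes $I+G_0(z)$ injective and hence invertible, and analyticity of $(I+G_0(z))^{-1}$ follows by a geometric-series argument. The same Fredholm reasoning applied to the boundary operator $G_0(\lambda\pm i0)$ gives a bounded inverse at each $\lambda\in\Delta\setminus{\cal N}_\pm$, and the resolvent-type identity
\[
(I+G_0(z))^{-1}-(I+G_0(z'))^{-1}=(I+G_0(z))^{-1}\bigl(G_0(z')-G_0(z)\bigr)(I+G_0(z'))^{-1}
\]
transports continuity (or H\"older continuity) of $G_0$ to the inverse, uniformly on compact subsets of the complement of ${\cal N}_\pm$.

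The closedness of ${\cal N}_\pm$ is a standard compactness argument: if $\lambda_n\to\lambda$ with $\lambda_n\in{\cal N}_\pm$ and unit vectors $g_n$ satisfy $(I+G_0(\lambda_n\pm i0))g_n=0$, then iteration gives $g_n=(-G_0(\lambda_n\pm i0))^p g_n$; using norm continuity of $G_0$ up to the cut together with compactness of $G_0(\lambda\pm i0)^p$, a subsequence of $g_n$ converges to a unit vector $g$ satisfying $(I+G_0(\lambda\pm i0))g=0$, so $\lambda\in{\cal N}_\pm$.

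The main obstacle is showing ${\cal N}_\pm$ has measure zero. My approach is to introduce a regularized Fredholm determinant $d(z)$ of $I+G_0(z)$, which is analytic in $\{\Im z>0,\Re z\in\Delta\}$, nonvanishing there by the invertibility already proved, continuous up to the cut, and whose boundary zeros along the real line coincide precisely with ${\cal N}_+$. If ${\cal N}_+$ had positive Lebesgue measure, then $d$ would be a nontrivial analytic function in the half-strip whose boundary values vanish on a set of positive measure, which by a classical boundary-uniqueness theorem (F.\ and M.\ Riesz, or Luzin--Privalov) forces $d\equiv 0$ and contradicts interior non-vanishing; the case ${\cal N}_-$ is symmetric. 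The delicate point is that a regularized determinant $\det_q$ requires some power $G_0(z)^q$ to be trace class, which is not directly guaranteed by $G_0(z)^p\in{\goth S}_\infty$. I would address this either by uniformly approximating $G_0$ by finite-rank-valued analytic functions on a neighbourhood of the cut and passing to the limit in the finite-dimensional determinant, or, alternatively, by invoking Gohberg--Sigal theory to reduce the problem locally at each $\lambda_0\in{\cal N}_\pm$ to inverting a finite-dimensional analytic matrix pencil, whose real-line zeros are isolated unless the pencil is identically singular --- again excluded by interior invertibility.
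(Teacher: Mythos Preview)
The paper does not prove this proposition; it is quoted as a known result with a reference to \cite[Theorem~1.8.3]{Yafaev}. So there is no ``paper's own proof'' to compare against, and your outline is not duplicating anything in the text.

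Your argument is essentially sound. Steps 1--3 are standard and correct. For Step 4 you correctly flag the real difficulty: a regularized determinant $\det_q$ needs $G_0(z)^q$ in a Schatten class, not merely compact, so the determinant route as stated is incomplete. Your proposed workarounds are in the right spirit, but there is a cleaner fix already implicit in your Step 1. The factorisation
\[
\sum_{k=0}^{p-1}(-G_0(z))^k\,(I+G_0(z)) \;=\; I-(-G_0(z))^p
\]
shows $\Ker(I+G_0(\lambda\pm i0))\subset\Ker\bigl(I-(-G_0(\lambda\pm i0))^p\bigr)$, so ${\cal N}_\pm$ is contained in the exceptional set for the \emph{compact}-valued analytic function $z\mapsto(-G_0(z))^p$. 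Now apply the analytic Fredholm alternative in its standard compact form: locally near any boundary point one fixes a single finite-rank operator $F$ with $\|(-G_0(\lambda_0\pm i0))^p-F\|<1$, writes $I-(-G_0)^p=(I-(-G_0)^p+F)\bigl(I-(I-(-G_0)^p+F)^{-1}F\bigr)$, and is reduced to the zero set of a scalar function (a finite determinant) that is analytic in the half-strip, continuous up to the cut, and non-vanishing in the interior. Luzin--Privalov then gives measure zero for the larger set, hence for ${\cal N}_\pm$. This avoids both the trace-class issue and the need to approximate $G_0$ itself by analytic finite-rank families, which is the one genuinely shaky suggestion in your list (such uniform analytic finite-rank approximations need not exist in general).
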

 
   \subsection{The ``single channel'' case}
   
Let us recall  the well known basic results 
(see, e.g., \cite{Kuroda} or  \cite[Sections~4.6, 4.7]{Yafaev}) in the ``single channel" setting
(i.e., for $N=1$ in \eqref{1.2a}) as it provides a simple model for the multichannel case considered in the next section. 
Let $A_0$ and ${A_\infty}$ be bounded self-adjoint operators in a Hilbert space 
$\cal H$ and let $A=A_0+{A_\infty}$ (this is consistent with notation \eqref{1.2a} if we set $N=1$ and $A_{1}=A_{0}$). Here  $A_0$ is regarded as the  ``free" operator,  
$A$ as the perturbed one, and $A_\infty$ is the perturbation.

\begin{proposition}\label{prp.b1} 
 Let $\Delta\subset\R$ be a bounded open interval; 
assume that the spectrum of $A_0$ on $\Delta$ is purely a.c.\ 
with a constant multiplicity.  Let $Q_{0}$ be a bounded
operator with $\Ker Q_{0}=\{0\}$; assume that $Q_{0}$ is strongly $A_0$-smooth on $\Delta$ with an 
exponent $\gamma>1/2$. 
Assume that the operator ${A_\infty}$ can be represented as ${A_\infty}=Q_{0}^*K  Q_{0}$ with a compact
operator $K $. Then the local wave operators 
$$
W_\pm(A,A_0;\Delta)
:=\slim_{t\to\pm\infty} e^{itA}e^{-itA_0} E_{0}(\Delta)
$$
exist and enjoy the intertwining property \eqref{eq:Int}. 
These operators are isometric on $\Ran E_{0}(\Delta)$ and are complete: 
$$
\Ran W_\pm(A,A_0;\Delta)=\Ran E^\ac(\Delta).
$$
The singular continuous spectrum of $A$ on $\Delta$ is absent.
All eigenvalues of $A$ in $\Delta$ have finite multiplicities and can 
accumulate only to the endpoints of $\Delta$. 
\end{proposition}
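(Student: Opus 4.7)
The plan is to verify the hypotheses of Proposition~\ref{KL} on each compact subinterval of $\Delta$ away from the eigenvalues of $A$, and then invoke the local-to-global passage described after Lemma~\ref{iso}. First I would factor $A_\infty=Q_0^*KQ_0$ via the polar decomposition $K=U|K|$ as $A_\infty=Q^*Q_0'$ with $Q_0'=|K|^{1/2}Q_0$ and $Q=|K|^{1/2}U^*Q_0$. Since strong $A_0$-smoothness of $Q_0$ with exponent $\gamma$ is preserved under composition with a bounded operator on the left (the map $F_0(CQ_0)^*=F_0Q_0^*C^*$ from $\calH$ to $C^\gamma(\Delta;\goth h_0)$ is continuous whenever $F_0Q_0^*\,\cdot$ is), both $Q_0'$ and $Q$ are strongly $A_0$-smooth with the same exponent $\gamma>1/2$, and hence $A_0$-smooth in Kato's sense on every compact subinterval $\delta\subset\Delta$ by the remark after Proposition~\ref{Privalov}. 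The harder task is the $A$-smoothness of $Q$, which I derive from a limiting absorption principle for $A$.

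Set $M_0(z)=Q_0R_0(z)Q_0^*$ and $M(z)=Q_0R(z)Q_0^*$. Sandwiching the resolvent identity $R(z)-R_0(z)=-R(z)A_\infty R_0(z)$ between $Q_0$ and $Q_0^*$ produces
\begin{equation*}
M(z)(I+KM_0(z))=M_0(z),\qquad\Im z\neq 0.
\end{equation*}
By Proposition~\ref{Privalov}, $M_0(z)$ extends H\"older continuously of exponent $\gamma$ to $\Delta$ from each half-plane; compactness of $K$ makes $G_0(z)=KM_0(z)$ compact, and a brief argument using self-adjointness shows $I+KM_0(z)$ is injective for every non-real $z$: if $(I+KM_0(z))g=0$, then $h=R_0(z)Q_0^*g$ satisfies $(A-z)h=0$, so $h=0$ and thus $g=0$. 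The analytic Fredholm alternative (Proposition~\ref{Fred}) then provides a closed set $\calN_\pm\subset\Delta$ of Lebesgue measure zero off which $(I+KM_0(\lambda\pm i0))^{-1}$ is H\"older continuous; this set consists precisely of those $\lambda$ for which $g+KM_0(\lambda\pm i0)g=0$ has a nontrivial solution. Consequently, $M(\lambda\pm i0)$ is H\"older continuous on $\Delta\setminus\calN_\pm$, and the identity $QR(z)Q^*=|K|^{1/2}U^*M(z)U|K|^{1/2}$ yields $A$-smoothness of $Q$ on every compact subinterval of $\Delta\setminus\calN_\pm$. Proposition~\ref{KL} then gives existence of the wave operators on such subintervals, Lemma~\ref{iso} (with $J=I$, $\varphi\equiv 1$) gives isometry, and the symmetric statement of Proposition~\ref{KL} applied to the pair $(A_0,A)$ gives completeness; the intertwining property is automatic from the existence of the strong limits.

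The final step is to identify $\calN_\pm$ with the eigenvalues of $A$ in $\Delta$ and to control them. Given $g\in\Ker(I+KM_0(\lambda\pm i0))\setminus\{0\}$, a formal expression for the eigenfunction is $f=-R_0(\lambda\pm i0)Q_0^*Kg$; made rigorous via the H\"older diagonalisation $F_0Q_0^*$ underlying strong smoothness, this produces a genuine element $f\in\Ker(A-\lambda)$ with $Q_0f=g$. The map $g\mapsto f$ is injective thanks to $\Ker Q_0=\{0\}$, and compactness of $KM_0(\lambda\pm i0)$ makes $\Ker(I+KM_0(\lambda\pm i0))$ finite-dimensional, which gives finite multiplicity. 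Absence of singular continuous spectrum on $\Delta$ then follows from a standard density argument: the measure $d(E(\cdot)Q_0^*h,Q_0^*h)$ has H\"older continuous density on $\Delta\setminus\calN_\pm$ by the limiting absorption principle, and $\Ran Q_0^*$ is dense because $\Ker Q_0=\{0\}$. The main obstacle is ruling out interior accumulation of eigenvalues, since closedness of $\calN_\pm$ and measure zero alone do not suffice; I would argue that if $\lambda_n\to\lambda^*\in\interior\Delta$ with $\lambda_n$ distinct eigenvalues and corresponding normalised, mutually orthogonal eigenfunctions $f_n$, then a weak-compactness argument for $g_n=Q_0f_n$ (exploiting compactness of $K$, uniform boundedness of $M_0(\lambda_n\pm i0)$, and H\"older continuity in $\lambda$) combined with a Bessel-type inequality for the $f_n$ yields a contradiction.
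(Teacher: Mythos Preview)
Your overall strategy matches the paper's: derive the sandwiched resolvent identity $Q_0R(z)Q_0^*(I+KQ_0R_0(z)Q_0^*)=Q_0R_0(z)Q_0^*$, apply the analytic Fredholm alternative to obtain the limiting absorption principle away from a closed null set $\calN_\pm$, and then invoke Proposition~\ref{KL}. The polar decomposition $K=U|K|$ is harmless but unnecessary; the paper simply uses the factorisation $A_\infty=(K^*Q_0)^*Q_0$ and observes that H\"older continuity of $Q_0R(z)Q_0^*$ (Proposition~\ref{prp.b1a}) gives $A$-smoothness of $Q_0$, hence of $K^*Q_0$.

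There is, however, a genuine gap in your identification $\calN_\pm\subset\sigma_p(A)$. Given $g\neq0$ with $g+KM_0(\lambda\pm i0)g=0$, the correct candidate eigenfunction is $\varphi=R_0(\lambda\pm i0)Q_0^*g$ (your formula $f=-R_0(\lambda\pm i0)Q_0^*Kg$ and the claim $Q_0f=g$ do not check out algebraically). The point you skip is why $\varphi\in\calH$: one must first show that $(F_0Q_0^*g)(\lambda)=0$. This vanishing is \emph{not} a consequence of the H\"older diagonalisation alone; it comes from the identity
\[
\Im\bigl((I+KM_0(z))g,\,M_0(z)g\bigr)=-\eps\,\|R_0(z)Q_0^*g\|^2,\qquad z=\lambda+i\eps,
\]
which follows from self-adjointness of $A$. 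Letting $\eps\to0$ with $g$ in the kernel gives $\lim_{\eps\to0}\eps\|R_0(\lambda\pm i\eps)Q_0^*g\|^2=0$, hence $\tfrac{d}{d\lambda}(E_0(-\infty,\lambda)Q_0^*g,Q_0^*g)=0$, i.e.\ $(F_0Q_0^*g)(\lambda)=0$. Only now does the hypothesis $\gamma>1/2$ enter, ensuring $\int_\Delta|\mu-\lambda|^{-2}\babs{(F_0Q_0^*g)(\mu)}^2\,d\mu<\infty$ so that $\varphi\in\calH$. This is the step that makes $\calN_\pm$ consist of eigenvalues, and it is also what drives the non-accumulation argument (one needs $\varphi_n\to\varphi_0$ in $\calH$, not merely weak control of $g_n$). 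Without it, neither conclusion follows; note that your sketch nowhere uses $\gamma>1/2$.
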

 
Proposition~\ref{prp.b1}, in particular, implies that the restriction of $A$ onto
$\Ran E^\ac(\Delta)$ is unitarily equivalent to the restriction of $A_0$ onto
$\Ran  E_{0}(\Delta)$. That is, the a.c.\  spectrum of $A$ on $\Delta$ 
has a constant multiplicity which coincides with the multiplicity of the a.c.\ 
spectrum of $A_0$ on $\Delta$. 

The key step in proof of Proposition~\ref{prp.b1} is the limiting absorption principle
in the following form. 

\begin{proposition}\label{prp.b1a}
Under the hypotheses of Proposition~$\ref{prp.b1}$
the operator valued function 
$Q_{0} R(z) Q_{0}^*$ is H\"older continuous in the operator norm
for $\Re z\in\Delta\setminus \sigma_p(A)$ and $\pm\Im z\geq 0$. 
\end{proposition}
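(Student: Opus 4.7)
The plan is to reduce the limiting absorption principle for $A$ to that for $A_0$ via the resolvent identity and the analytic Fredholm alternative. Since $A_\infty = Q_0^* K Q_0$, the second resolvent identity
\begin{equation*}
R(z) = R_0(z) - R_0(z) Q_0^* K Q_0 R(z), \qquad \Im z \neq 0,
\end{equation*}
sandwiched between $Q_0$ on the left and $Q_0^*$ on the right gives
\begin{equation*}
(I + T_0(z) K)\, T(z) = T_0(z),
\end{equation*}
where $T_0(z) := Q_0 R_0(z) Q_0^*$ and $T(z) := Q_0 R(z) Q_0^*$. By Proposition~\ref{Privalov}, the strong $A_0$-smoothness of $Q_0$ with exponent $\gamma > 1/2$ forces $T_0(z)$ to be H\"older continuous in operator norm for $\Re z \in \Delta$, $\pm \Im z \geq 0$; compactness of $K$ then delivers the same regularity, with compact values, for $G_0(z) := T_0(z) K$.

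I would next apply the analytic Fredholm alternative (Proposition~\ref{Fred}) to $G_0(z)$. For $\Im z \neq 0$ injectivity of $I + G_0(z)$ is direct: if $g + T_0(z) K g = 0$, set $\psi := -R_0(z) Q_0^* K g \in \calH$; then $Q_0 \psi = g$ and
\begin{equation*}
(A - z)\psi = (A_0 - z)\psi + Q_0^* K Q_0 \psi = -Q_0^* K g + Q_0^* K g = 0,
\end{equation*}
so self-adjointness of $A$ forces $\psi = 0$ and hence $g = Q_0 \psi = 0$. Proposition~\ref{Fred} therefore supplies a closed exceptional set $\calN_\pm \subset \Delta$ of Lebesgue measure zero outside of which $(I + G_0(z))^{-1}$ is H\"older continuous up to the real axis. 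Substitution into $T(z) = (I + G_0(z))^{-1} T_0(z)$ yields the H\"older continuity of $Q_0 R(z) Q_0^*$ for $\Re z \in \Delta \setminus \calN_\pm$, $\pm \Im z \geq 0$.

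It remains to show $\calN_\pm \subset \sigma_p(A) \cap \Delta$. Given $\lambda \in \calN_\pm$ and a nonzero $g$ with $g + T_0(\lambda \pm i0) K g = 0$, the plan is to manufacture an eigenvector $\psi \in \calH$ of $A$ at $\lambda$, formally given by $\psi = -R_0(\lambda \pm i0) Q_0^* K g$. This is the main obstacle: the boundary-value resolvent $R_0(\lambda \pm i0)$ is not bounded on $\calH$, so one must construct $\psi$ as a suitable limit (e.g., a weak $\calH$-limit of $-R_0(\lambda \pm i\varepsilon) Q_0^* K g$ as $\varepsilon \to 0^+$), verify that $\psi$ is a genuine element of $\calH$, check the identity $Q_0 \psi = g$ (which gives $\psi \neq 0$ because $g \neq 0$), and confirm $A \psi = \lambda \psi$. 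The existence of this limit is what ultimately rests on the strong $A_0$-smoothness hypothesis of Definition~\ref{strsm}, applied in the spectral representation $F_0$ via a Plemelj/Privalov-type argument; this is precisely where the assumption $\gamma > 1/2$ is used. Once the inclusion $\calN_\pm \subset \sigma_p(A)$ is established, Proposition~\ref{prp.b1a} follows.
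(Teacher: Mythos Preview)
Your overall scheme is the paper's own: sandwich the resolvent identity, invoke Proposition~\ref{Privalov} for $T_0(z)=Q_0R_0(z)Q_0^*$, then the analytic Fredholm alternative (Proposition~\ref{Fred}) for a compact-valued $G_0$, and finally identify the exceptional set with eigenvalues. The only cosmetic difference is that the paper writes the sandwich from the right, obtaining $Q_0R(z)Q_0^*(I+KQ_0R_0(z)Q_0^*)=Q_0R_0(z)Q_0^*$ with $G_0(z)=KT_0(z)$, whereas you write it from the left with $G_0(z)=T_0(z)K$; the kernels correspond via $h=Kg$.

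There is, however, a real omission in your final paragraph. The membership $\psi=-R_0(\lambda\pm i0)Q_0^*Kg\in\calH$ does \emph{not} follow from a Plemelj/Privalov argument alone; as stated, $R_0(\lambda\pm i\varepsilon)Q_0^*Kg$ would generically blow up like $\varepsilon^{-1/2}$. The missing ingredient is a self-adjointness identity that forces the spectral density of $f:=Q_0^*Kg$ to vanish at $\lambda$. Concretely, setting $\psi_\varepsilon=-R_0(\lambda+i\varepsilon)Q_0^*Kg$ and using $A=A_0+Q_0^*KQ_0$ together with $Q_0\psi_\varepsilon=-T_0(\lambda+i\varepsilon)Kg$, one computes $(A-\lambda-i\varepsilon)\psi_\varepsilon=-Q_0^*K(I+G_0(\lambda+i\varepsilon))g$, whence
\[
-\varepsilon\,\norm{\psi_\varepsilon}^2=\Im\bigl(K(I+G_0(\lambda+i\varepsilon))g,\,T_0(\lambda+i\varepsilon)Kg\bigr).
\]
Since $(I+G_0(\lambda\pm i0))g=0$ and the right-hand side is continuous up to the axis, this gives $\varepsilon\norm{R_0(\lambda+i\varepsilon)f}^2\to0$, i.e.\ $(F_0f)(\lambda)=0$ by \eqref{eq:ZG1}. \emph{Only then} does the H\"older exponent $\gamma>1/2$ yield
\[
\norm{E_0(\Delta)\psi}^2=\int_\Delta\abs{\mu-\lambda}^{-2}\babs{(F_0f)(\mu)-(F_0f)(\lambda)}^2\,d\mu\leq C\int_\Delta\abs{\mu-\lambda}^{2\gamma-2}\,d\mu<\infty,
\]
so that $\psi\in\calH$; your checks $Q_0\psi=g$ and $A\psi=\lambda\psi$ then go through. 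This two-step mechanism (vanishing at $\lambda$ via self-adjointness, then integrability via $\gamma>1/2$) is exactly what the paper carries out in the general setting in Lemmas~\ref{lma.ccs4}--\ref{lma.ccs6}; you should make it explicit here rather than subsume it under ``Plemelj/Privalov''.
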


The usual scheme of the proof of Proposition~\ref{prp.b1a} proceeds as follows. 
First, one uses  Proposition~\ref{Privalov} 
to establish the H\"older continuity of $Q_0R_0( z )Q_0^*$. 
Then, from the standard resolvent identity 
\begin{equation}
R(z)(I+A_\infty R_0(z))=R_0(z),
\label{2.0}
\end{equation}
using the factorisation $A_\infty=Q_0^*KQ_0$, one obtains
\begin{equation}
Q_0R(z)Q_0^*(I+KQ_0R_0(z)Q_0^*)=Q_0R_0(z)Q_0^*.
\label{2.1}
\end{equation}
This is a Fredholm equation for the operator $Q_0R(z)Q_0^*$. Applying now Proposition~\ref{Fred} to the operator valued function $G_{0}(z)= KQ_0R_0(z)Q_0^*$, we see that $Q_0R(z)Q_0^*$ is continuous for $\Re z\in\Delta$,  $\pm\Im z\geq 0$ away from the exceptional set  ${\cal N}_{\pm}$. It follows that the singular spectrum of the operator $A$ is contained in the set ${\cal N}_{+} \cap {\cal N}_-$. Under the assumption $\gamma>1/2$ it is possible to prove that ${\cal N}_{+} = {\cal N}_-$ and that this set consists of eigenvalues of $A$. Thus the singular continuous spectrum of the operator $A$ in $\Delta$ is empty. Basically the same arguments show that the  eigenvalues of $A$ do not have in $\Delta$ interior points of accumulation. 

It follows from Proposition~\ref{prp.b1a} that the operator $Q_{0}$ is $A$-smooth in the sense of Kato on every compact subinterval of $\Delta\setminus \sigma_p(A)$. Thus
Proposition~\ref{KL}  guarantees the existence of the wave operators $W_\pm(A,A_0;\Delta)$ and $W_\pm(A_0, A ;\Delta)$. This ensures that the wave operators $W_\pm(A,A_0;\Delta)$ are isometric and complete.

Assumption $\gamma>1/2$ in Proposition~\ref{prp.b1} is required only for the statements about the singular 
continuous and point spectra of $A$. Construction of the wave operators can be achieved
under the weaker assumption $\gamma>0$. 

Note that, in the case $N=1$, our resolvent equation \eqref{bb3} for the triple  \eqref{1.2a}, \eqref{1.2b}, \eqref{1.2e} reduces to \eqref{2.0}.

\section{Main results} \label{sec.bb}

\subsection{A gentle introduction: essential spectrum}

Weyl's theorem on the invariance of the essential spectrum of a self-adjoint operator
under compact perturbations can be regarded as    a precursor of scattering theory. Here we use this setting in order to illustrate the issues specific to our multichannel situation. The statement below is well known (see, e.g., the book \cite{Pe}). 
We give the proof   since it explains why the products
$A_j A_k$ appear in the analysis of $A$. 
Our proof   relies only on a direct construction of Weyl's sequences.

   \begin{proposition}\label{ess}
   Let  $A_{1}$, \ldots, $A_{N}$ be bounded self-adjoint operators
 such that $A_{j} A_{k}\in{\goth S}_{\infty}$ for $j\neq k$, and let the operator $A$ be defined by formula \eqref{1.2a}. Then
\begin{equation}
\sigma_{\ess}(A)=\bigcup_{j=1}^N \sigma_{\ess}(A_{j})  \q ({\rm mod}\{0\}),
\label{eq:aa}
\end{equation}
that is, the left and the right-hand sides coincide up to a possible zero eigenvalue of infinite multiplicity.
\end{proposition}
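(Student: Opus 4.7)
The plan is to prove the two inclusions separately, in each case by a direct construction of a Weyl sequence. Recall that for a bounded self-adjoint operator $B$, one has $\lambda \in \sigma_\ess(B)$ if and only if there exists a sequence $\{f_n\}$ with $\|f_n\|=1$, $f_n \rightharpoonup 0$, and $(B-\lambda)f_n \to 0$.

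For the inclusion $\bigcup_{j} \sigma_\ess(A_j) \subset \sigma_\ess(A) \cup \{0\}$, fix $\lambda \in \sigma_\ess(A_j) \setminus \{0\}$ and let $\{f_n\}$ be a Weyl sequence for $A_j$ at $\lambda$. I want to show that the same sequence is a Weyl sequence for $A$ at $\lambda$, for which it suffices to verify $A_k f_n \to 0$ whenever $k \neq j$. This follows from the identity $\lambda A_k f_n = A_k A_j f_n - A_k(A_j - \lambda)f_n$: the first summand tends to $0$ because $A_k A_j \in {\goth S}_\infty$ and $f_n \rightharpoonup 0$, and the second tends to $0$ by boundedness of $A_k$ together with $(A_j-\lambda)f_n \to 0$. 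Dividing by $\lambda \neq 0$ gives $A_k f_n \to 0$, as desired.

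For the reverse inclusion, fix $\lambda \in \sigma_\ess(A) \setminus \{0\}$ and a Weyl sequence $\{f_n\}$ for $A$ at $\lambda$. Applying $A_j$ to $A f_n = \lambda f_n + o(1)$ and using $A_j A_k f_n \to 0$ for $k \neq j$ (compact operator on a weakly null sequence), one obtains the key relation $A_j^2 f_n = \lambda A_j f_n + o(1)$, valid for every $j$. To produce a Weyl sequence for some $A_{j_0}$ at $\lambda$, apply Cauchy--Schwarz: $\|A f_n\|^2 = \|\sum_{j} A_j f_n\|^2 \leq N \sum_{j} \|A_j f_n\|^2$, and since $\|A f_n\| \to |\lambda| > 0$, after passing to a subsequence there exist an index $j_0$ and $\delta>0$ with $\|A_{j_0} f_n\| \geq \delta$. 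Setting $h_n := A_{j_0} f_n / \|A_{j_0} f_n\|$ one then gets a unit vector for which $(A_{j_0}-\lambda) h_n \to 0$ by the key relation, and $h_n \rightharpoonup 0$ because $A_{j_0} f_n \rightharpoonup 0$ while the normalizing factors stay bounded above and bounded away from zero. Hence $\lambda \in \sigma_\ess(A_{j_0})$.

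The only delicate point is in the second inclusion: the relation $A_j^2 f_n = \lambda A_j f_n + o(1)$ makes $A_j f_n$ an approximate eigenvector of $A_j$ only when its norm does not collapse, so a quantitative lower bound obtained from Cauchy--Schwarz (rather than a soft averaging argument) is needed to single out $j_0$. The hypothesis $\lambda \neq 0$ enters both directions---to divide by $\lambda$ in the first, and to secure the lower bound $\|A f_n\| \to |\lambda|>0$ in the second---which explains the origin of the $(\mathrm{mod}\{0\})$ qualification in the statement.
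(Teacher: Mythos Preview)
Your proof is correct and follows essentially the same approach as the paper's: both directions proceed by transporting Weyl sequences, using the compactness of $A_jA_k$ to kill the cross terms, and in the reverse direction both normalize $A_{j_0}f_n$ after securing a lower bound on its norm. The only cosmetic difference is that you invoke Cauchy--Schwarz to find $j_0$, whereas the paper argues by contradiction that not all $\|A_jf_n\|$ can tend to zero; these are the same observation.
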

Of course, with notation \eqref{1.2b}, formula \eqref{eq:aa} can be equivalently rewritten as 
$$
\sigma_{\ess}(A)=\sigma_{\ess}(A_0) \q ({\rm mod}\{0\}).
$$
\begin{proof}
  If $\lambda\in \sigma_{\ess}(A_{j}) $ for some $j=1,\ldots, N$
  and $\lambda\not=0$, then there exists a (Weyl) sequence $f_{n}$ such that $\|f_{n}\|=1$, $f_{n}\to 0$ weakly and $g_{n}:=A_{j}f_{n}-\lambda f_{n}  \to 0$ strongly as $n\to \infty$. Since the operators $A_{k} A_{j}$, $k\neq j$, are compact, it follows that for all $k\neq j$
$$
A_{k}f_{n}=\lambda^{-1} (A_{k} A_{j} f_{n}-A_{k}g_{n})
$$   
converge  strongly to zero. 
Thus $f_{n}$ is also the Weyl sequence for the operator $A$ and the same $\lambda$.

   Conversely,
  if $\lambda\in \sigma_{\ess}(A ) $, then there exists a Weyl sequence $f_{n}$ such that $\|f_{n}\|=1$, $f_{n}\to 0$ weakly and $h_{n}:=A f_{n}-\lambda f_{n}  \to 0$ strongly as $n\to \infty$. Since the operators $A_{k} A_{j}$, $k\neq j$, are compact, this implies that,  for all indices $j=1,\ldots, N$, the sequences
   \begin{equation}
  ( A_{j} -\lambda )A_{j}f_{n} = A_{j} ( A- \sum_{k\neq j} A_{k} -\lambda)  f_{n}=
 -  \sum_{k\neq j}A_{j} A_{k} f_{n}+A_{j} h_{n}\to 0
 \label{eq:weyl}\end{equation}
       strongly  as $n\to \infty$. 
Observe that if     $\lambda\neq 0$, then at least for one of the indices $j=1,\ldots, N$, the norm
$\| A_{j}f_{n}\| $ does not tend to zero as $n\to\infty$.  
Indeed, supposing the contrary,  we find that
$$
\sum_{j=1}^N A_{j}f_{n}= Af_{n}=\lambda f_{n}+  h_{n}
$$
      converges  strongly to zero while the norm of the right-hand side tends to $|\lambda |\neq 0$.
  If $  A_{j}f_{n}  $ does not converge to zero, then we can assume that $ \| A_{j}f_{n} \|    \geq c> 0$ and set $\varphi_{n}=A_{j} f_{n} \| A_{j}f_{n} \|^{-1}$. It follows from \eqref{eq:weyl} that $ (A_{j} -\lambda) \varphi_{n}\to 0$ as $n\to \infty$. Moreover, $\varphi_{n}\to 0$ weakly  because $f_{n}\to 0$ weakly as $n\to \infty$. 
    \end{proof}

  It is easy to see that the condition  ``mod $\{0\}$" cannot be dropped in \eqref{eq:aa}.
    Indeed,  let $\calH$ be a Hilbert space of infinite dimension,  and let
    $A_{1}=\diag\{ I, 0\}$, $A_{2}=\diag\{ 0,I\}$
     be diagonal operators in the space $\calH\oplus\calH$. Then
   $\sigma_{\ess}(A_1)= \sigma_{\ess}(A_2)=\{0,1\}$ while $ \sigma_{\ess}(A_{1}+A_2)=\{1\}$.

\subsection{Multichannel scheme: main results}\label{sec.bb3}

Let $N\in\mathbb N$ and let ${A_\infty}$, $A_j$, $j=1,\dots,N$,  
be bounded self-adjoint operators in a Hilbert space $\cal H$. 
As in Section~\ref{sec.a}, we set
$$
A= A_\infty+A_1+A_2+\dots+A_N.
$$
Let $\Delta\subset \R$ be a bounded open interval with 
$0\not\in \Delta$, and let $X $ be a bounded operator in $\cal H$. 
We need

\begin{assumption}\label{assu}
\begin{enumerate}[\rm (i)]
\item
One has
$$
\Ker X =\Ker X ^*=\{0\}.
$$

\item
The spectra of the operators  $A_{1}$, \ldots, $A_{N}$
in $\Delta$ are a.c.\  and have constant  multiplicities. 
For all $j=1,\ldots, N$,  the operator $X $ is  strongly $A_{j}$-smooth 
$($see Definition~$\ref{strsm})$ 
on $\Delta$  with an exponent $\gamma>1/2$.

\item
The operator ${A_\infty}$ can be represented as
${A_\infty} =X ^* K_\infty X  $ with a compact operator $K_\infty$.

\item 
For all $j\neq \ell$,
the operators $A_{j} A_{\ell}$ can be represented as
\begin{equation}
A_{j} A_{\ell} =X ^*  K_{j,\ell} X 
\label{eq:m}\end{equation}
where the operators $K_{j,\ell}$ are compact.
\item
The operators $X   A_{j} X ^{-1}$ are bounded\footnote{Strictly speaking, we assume  that the operators $X   A_{j} X ^{-1}$ defined on the dense 
  set $\Ran X$ extend to bounded operators. The same convention applies to all other operators of this type.} for all $j=1,\ldots,N$.
\end{enumerate}
\end{assumption}
 
Our spectral results are collected in the following assertion.

\begin{theorem}\label{thm.b2}
Under Assumption~{\rm\ref{assu}} one has:
\begin{enumerate}[\rm (i)]
\item
The a.c.\  spectrum of $A$ 
on $\Delta$ has a constant multiplicity equal to the sum of the multiplicities
of the a.c.\  spectra of $A_j$, $j=1,\dots,N$, on $\Delta$. 
\item
The singular continuous spectrum of $A$ on $\Delta$ is empty. 
The eigenvalues of $A$ in $\Delta$ have finite multiplicities and can 
accumulate only to the endpoints of $\Delta$. 
\item
The operator valued function $X R(z) X^*$ 
is H\"older continuous $($in the operator norm$)$ in $z$ for $\pm\Im z\geq 0$
and $\Re z\in \Delta\setminus \sigma_p(A)$.
\end{enumerate}
\end{theorem}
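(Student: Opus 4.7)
The plan is to embed the problem into the abstract two-space framework of Subsection~\ref{sec.a2}, verify the hypotheses of the smooth scheme of Section~\ref{sec.ccs} for this embedding, and read off the three conclusions of Theorem~\ref{thm.b2}. Set $\calH_{0}=\calH^{N+1}$, take $A_{0}$, $J$ and $T$ as in \eqref{1.2b}, \eqref{1.2e} and \eqref{eq:AA}; the identity $AJ-JA_{0}=JT$ is a direct matrix computation. The central algebraic step is to introduce the diagonal operator $\bfX=\diag(X,\ldots,X)$ on $\calH_{0}$ and to check that
\begin{equation*}
TA_{0}=\bfX^{*}\mathbf{K}\bfX
\end{equation*}
with a compact matrix operator $\mathbf{K}$: the $0$-th column of $TA_{0}$ vanishes since $A_{0}$ has a zero block; for $k\geq 1$ the $(0,k)$ entry is $A_{\infty}A_{k}=X^{*}K_{\infty}(XA_{k}X^{-1})X$, compact by Assumption~\ref{assu}(iii) and (v); the off-diagonal $(i,k)$ entries with $1\leq i\neq k$ are $X^{*}K_{i,k}X$ by Assumption~\ref{assu}(iv); and the remaining entries vanish from the structure of $T$.

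Since $0\notin\Delta$, the spectral projection decomposes as $E_{0}(\Delta)=0\oplus E_{1}(\Delta)\oplus\cdots\oplus E_{N}(\Delta)$; stacking the diagonalisations of $A_{j}|_{\Ran E_{j}(\Delta)}$ diagonalises $A_{0}|_{\Ran E_{0}(\Delta)}$, and $\bfX$ inherits strong $A_{0}$-smoothness on $\Delta$ with the exponent $\gamma>1/2$ provided by Assumption~\ref{assu}(ii). Proposition~\ref{Privalov} then gives H\"older continuity of $\bfX R_{0}(z)\bfX^{*}$ up to the real axis on $\Delta$; the same spectral-theorem argument, combined with the fact that $\lambda^{-1}\chi_{\Delta}(\lambda)$ is a bounded Borel function because $0\notin\Delta$, also yields H\"older continuity of $\bfX R_{0}(z)A_{0}^{-1}E_{0}(\Delta)\bfX^{*}$.

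The crux of the argument is to turn \eqref{bb3} into a Fredholm equation on $\Ran E_{0}(\Delta)$. The factorisation $TA_{0}=\bfX^{*}\mathbf{K}\bfX$ combined with the bounded invertibility of $A_{0}$ on $\Ran E_{0}(\Delta)$ gives $TE_{0}(\Delta)=\bfX^{*}\mathbf{K}\bfX A_{0}^{-1}E_{0}(\Delta)$, so that $TR_{0}(z)E_{0}(\Delta)$ is compact-operator-valued and H\"older continuous up to the real axis by the previous step. Localising \eqref{bb3} with $E_{0}(\Delta)$ and sandwiching on the two sides with $\bfX$ and $J$, using the identities $J\bfX^{*}=X^{*}J$ and $\bfX J^{*}=J^{*}X$, one arrives at a Fredholm equation of the form
\begin{equation*}
(I+\mathbf{G}(z))\,\mathbf{F}(z)=\mathbf{H}(z),
\end{equation*}
where $\mathbf{F}(z)$ carries $XR(z)X^{*}$ as its entries, $\mathbf{H}(z)$ is H\"older continuous, and $\mathbf{G}(z)$ is compact-operator-valued and H\"older continuous (its compactness provided by $\mathbf{K}$, its regularity by the estimates above). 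The analytic Fredholm alternative (Proposition~\ref{Fred}) then yields H\"older continuity of $(I+\mathbf{G}(z))^{-1}$, and hence of $XR(z)X^{*}$, up to the real axis on $\Delta\setminus\calN_{\pm}$ for a closed null set $\calN_{\pm}$. Exactly as in the single-channel scheme reviewed after Proposition~\ref{prp.b1a}, the hypothesis $\gamma>1/2$ serves to identify $\calN_{+}=\calN_{-}$ with $\sigma_{p}(A)\cap\Delta$, of finite multiplicity and without interior accumulation, giving parts~(ii) and~(iii).

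For part~(i) one passes to the modified identification $\wt J=JA_{0}$, whose effective perturbation $A\wt J-\wt J A_{0}=J\bfX^{*}\mathbf{K}\bfX$ admits a Kato--Lavine factorisation $Q^{*}Q_{0}$ with $Q_{0}=\bfX$ (strongly $A_{0}$-smooth by the above) and $Q=\mathbf{K}^{*}\bfX J^{*}$, whose $A$-smoothness follows from part~(iii) via the identity $\bfX J^{*}R(z)J\bfX^{*}=J^{*}XR(z)X^{*}J$. Proposition~\ref{KL} then gives the existence of $W_{\pm}(A,A_{0};\wt J,\Delta)$; the bounded invertibility of $A_{0}$ on $\Ran E_{0}(\Delta)$ lets one pass from $\wt J$ back to $J$, and isometry and completeness of $W_{\pm}(A,A_{0};J,\Delta)$ then follow from the abstract scheme, yielding the a.c.\ multiplicity statement~(i). \emph{Main obstacle.} The delicate step is the Fredholm construction: neither $T$ nor $TR_{0}(z)$ is compact by itself, and only the combination of the sandwich factorisation $TA_{0}=\bfX^{*}\mathbf{K}\bfX$ with the bounded invertibility of $A_{0}$ on $\Ran E_{0}(\Delta)$ --- an argument that would break down if $0\in\Delta$ --- furnishes the compact, H\"older continuous $\mathbf{G}(z)$ required by the analytic Fredholm alternative.
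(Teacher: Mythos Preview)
Your overall plan---embed into the two-space framework with $Q_0=\bfX=\diag(X,\dots,X)$, verify the factorisation $TA_0=\bfX^*\mathbf K\bfX$, and run the smooth scheme of Section~\ref{sec.ccs}---is exactly what the paper does in Section~\ref{sec.f}. The gap is in the ``crux'' step, where you assert that localising with $E_0(\Delta)$ and sandwiching produces a Fredholm equation with a \emph{compact} kernel $\mathbf G(z)$.

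First, the claim that ``$TR_0(z)E_0(\Delta)$ is compact-operator-valued and H\"older continuous up to the real axis'' is false in operator norm: $R_0(z)E_0(\Delta)$ blows up as $\Im z\to0$, and multiplying by the bounded (not smoothing) operator $T$ does not cure this. What is H\"older continuous is only the \emph{fully sandwiched} object $\bfX R_0(z)\bfX^*$. Second, and more fundamentally, once you sandwich the resolvent identity properly you are forced into the paper's equation \eqref{ccs6}, whose kernel is
\[
G_0(z)=-z^{-1}M_0^*+z^{-1}\mathbf K\,\bfX R_0(z)\bfX^*,\qquad M_0=\bfX T^*\bfX^{-1}.
\]
The $-z^{-1}M_0^*$ term arises from the identity $zR_0(z)=-I+A_0R_0(z)$ (it is the ``$-T\bfX^*$'' contribution), and its block entries are $(X^*)^{-1}A_jX^*$, bounded by Assumption~\ref{assu}(v) but \emph{not} compact. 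Projecting with $E_0(\Delta)$ does not remove this term: it is independent of $R_0(z)$ and lives outside the range of any spectral cutoff. Consequently $G_0(z)$ is not compact; only $G_0(z)^2$ is (because $M_0^2$ is compact, which is where the products $A_jA_k$, $j\neq k$, enter). This is precisely the feature that distinguishes the multichannel case from the single-channel one, as the paper stresses right after \eqref{ccs6}. Your ``bounded invertibility of $A_0$ on $\Ran E_0(\Delta)$'' trick does \emph{not} circumvent it.

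The paper handles this by invoking the analytic Fredholm alternative in the form of Proposition~\ref{Fred} with $p=2$, and then doing the additional work of Lemmas~\ref{lma.ccs4}--\ref{lma.ccs8} (in particular the identity \eqref{ccs11} and Assumption~\ref{ass2}(viii)) to show $\calN_+=\calN_-=\sigma_p(A)\cap\Delta$. Your sketch suppresses exactly this extra layer, which is where the substance of the multichannel argument lies.
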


The scattering theory for the set of operators $A_{1},\ldots, A_{N}$ and the operator $A$ is described in the following assertion. We denote by $E_j(\Delta)$ the spectral projection of $A_j$ corresponding to the interval $\Delta$.

\begin{theorem}\label{thm.b2b}
Under Assumption~$\ref{assu}$ one has:
\begin{enumerate}[\rm (i)]
\item
For all $j=1,\dots,N$, the local wave operators  
$$
W_\pm(A,A_j;\Delta) = \slim_{t\to\pm\infty} e^{iAt}   e^{- iA_{j}t}E_{j} (\Delta) 
$$
exist and enjoy the   intertwining property
$$
AW_\pm(A,A_j;\Delta)=W_\pm(A,A_j;\Delta)A_j.
$$
The operators $W_\pm(A,A_j;\Delta)$ are isometric on $\Ran E_{j}(\Delta)$.
Their ranges are orthogonal to each other:
\begin{equation}
\Ran W_\pm(A,A_j;\Delta)\perp \Ran W_\pm(A,A_\ell ;\Delta), 
\quad 
j\not= \ell.
\label{eq:Orth}\end{equation}

\item
The asymptotic completeness holds:
\begin{equation}
\bigoplus_{j=1}^N \Ran W_\pm(A,A_j;\Delta)
=
\Ran E^\ac(\Delta).
\label{eq:AsCo}\end{equation}
\end{enumerate}
\end{theorem}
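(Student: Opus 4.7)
The plan is to derive Theorem~\ref{thm.b2b} from the abstract two-space scattering theory of Section~\ref{sec.ccs}, applied to the triple $A_{0}$, $A$, $J$ of \eqref{1.2a}, \eqref{1.2b}, \eqref{1.2e}. Section~\ref{sec.f} will verify that Assumption~\ref{assu} implies the ``abstract'' hypotheses used in Section~\ref{sec.ccs}; taking that verification as given, the channel-wise statement of Theorem~\ref{thm.b2b} will follow from a short translation.

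The abstract theory is formulated using the modified identification $\wt{J}=JA_{0}$, since it is $TA_{0}$ (not $T$ itself) that inherits strong $A_{0}$-smoothness from Assumption~\ref{assu}. This yields existence and asymptotic completeness of $W_{\pm}(A,A_{0};\wt{J},\Delta)$, together with the limiting absorption principle. Because $0\notin\Delta$, the restriction $A_{0}|_{\Ran E_{0}^{\ac}(\Delta)}$ is boundedly invertible, and the identity
\[
\wt{J}\, e^{-itA_{0}} E_{0}^{\ac}(\Delta)= J\, e^{-itA_{0}} A_{0} E_{0}^{\ac}(\Delta)
\]
immediately yields existence of $W_{\pm}(A,A_{0};J,\Delta)$, with range equal to $\Ran E^{\ac}(\Delta)$. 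A direct computation of $\wt{J}^{*}\wt{J}=A_{0}J^{*}JA_{0}$ in block form shows that its $(j,k)$ entry is $A_{j}A_{k}$ (with the first slot interpreted as zero), so that $\wt{J}^{*}\wt{J}-A_{0}^{2}$ has vanishing diagonal and, by Assumption~\ref{assu}(iv), compact off-diagonal entries. Lemma~\ref{iso} with $\varphi(\lambda)=\lambda^{2}$ then gives $W_{\pm}^{*}W_{\pm}=A_{0}^{2}E_{0}^{\ac}(\Delta)$ for the $\wt{J}$-wave operator; conjugating by $A_{0}^{-1}$ on $\Ran E_{0}^{\ac}(\Delta)$ yields isometricity of $W_{\pm}(A,A_{0};J,\Delta)$ on this subspace.

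The channel decomposition is the final step. Let $\iota_{j}\colon \calH\hookrightarrow \calH_{0}$ denote the embedding onto the $j$-th summand; then $J\iota_{j}=I_{\calH}$ and $A_{0}\iota_{j}=\iota_{j}A_{j}$. For $f_{j}\in\Ran E_{j}(\Delta)$ one computes
\[
W_{\pm}(A,A_{0};J,\Delta)\,\iota_{j}f_{j}
= \slim_{t\to\pm\infty}e^{iAt}e^{-itA_{j}}f_{j},
\]
so that $W_{\pm}(A,A_{j};\Delta)=W_{\pm}(A,A_{0};J,\Delta)\,\iota_{j}$ exists, and the intertwining property transfers from $A_{0}$ to $A_{j}$. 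Since $0\notin\Delta$, one has the orthogonal decomposition
\[
\Ran E_{0}^{\ac}(\Delta)=\bigoplus_{j=1}^{N}\iota_{j}\Ran E_{j}(\Delta),
\]
and applying the isometric $W_{\pm}(A,A_{0};J,\Delta)$ to both sides delivers simultaneously the orthogonality \eqref{eq:Orth} and the completeness \eqref{eq:AsCo}.

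The main obstacle lies not in the translation above, which is essentially formal, but in the abstract framework of Section~\ref{sec.ccs}: verifying Fredholm invertibility of $I+TR_{0}(z)$ on $\calH_{0}$, establishing H\"older continuity of $X R(z)X^{*}$ up to the real axis, and extracting strong $A_{0}$-smoothness of $TA_{0}$ from Assumption~\ref{assu}(iii)--(v) via the auxiliary boundedness of $XA_{j}X^{-1}$. Once these ``single Hilbert space'' results for the triple $A_{0},A,\wt{J}$ are in hand, the channel-wise statement follows purely from the block-diagonal structure of $A_{0}$ and the hypothesis $0\notin\Delta$.
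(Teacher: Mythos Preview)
Your proposal is correct and follows essentially the same route as the paper: apply the abstract two-space theory (Theorem~\ref{thm.scat}) to the triple \eqref{1.2a}, \eqref{1.2b}, \eqref{1.2e}, using the verification in Section~\ref{sec.f} that Assumption~\ref{assu} implies Assumption~\ref{ass2}, and then read off the channel-wise statements from the block-diagonal structure of $A_{0}$. Your use of the embeddings $\iota_{j}$ and the orthogonal decomposition of $\Ran E_{0}^{\ac}(\Delta)$ is exactly the paper's argument in slightly cleaner notation; the paper instead works with vectors $\mathbf{f}=(0,\dots,0,f_{j},0,\dots,0)^{\top}$ and the identity \eqref{eq:star1}, and derives isometricity of $W_{\pm}(A,A_{j};\Delta)$ directly from their existence (via Lemma~\ref{iso} with $J=I$) rather than from the two-space isometricity, but these differences are purely cosmetic.
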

We note that the first statement of Theorem~\ref{thm.b2} is a direct consequence of Theorem~\ref{thm.b2b}.

As already mentioned in the Introduction, in the trace class framework (Ismagilov's theorem) 
the statements of Theorem~\ref{thm.b2b} (but not parts (ii, iii) of Theorem~\ref{thm.b2}) 
are obtained in \cite{Howl0,Su} under the assumptions $A_jA_k\in{\goth S}_1$, $j\not=k$, and $A_\infty\in {\goth S}_1$. Part (i) of Theorem~\ref{thm.b2} goes back to \cite{Isma}.

\section{A two spaces setup}\label{sec.ccs}

\subsection{Assumptions and results}

Let $A_0$ (resp. $A$) be a bounded self-adjoint operator in a Hilbert space
$\calH_0$ (resp. $\calH$). 
Let $J:\calH_0\to\calH$ be a bounded operator (the ``identification''). 
Our key assumption is that the factorisation 
\eqref{bb2} holds true with a bounded operator $T$ in $\calH_0$. 
We fix an open bounded interval $\Delta\subset \R\setminus\{0\}$
and a bounded operator $Q_0$ in $\calH_0$. Below we present a suitable version of scattering theory for the triple 
$(A_0,A,J)$ under the following

\begin{assumption}\label{ass2}
\begin{enumerate}[\rm (i)]
\item
$\Ker J^*=\{0\}$. 
\item
$\Ker J\cap \Ker(A_0+T-z)=\{0\}$ 
for all $z\not=0$. 
\item
$\Ker Q_0=\Ker Q_0^*=\{0\}$. 
\item
The spectrum of $A_0$ on $\Delta$ is purely a.c.\  with a constant
multiplicity. 
The operator $Q_0$ in $\calH_0$ is strongly $A_0$-smooth on $\Delta$ 
with an exponent $\gamma\in (1/2,1)$. 
\item
The operator $T A_0$ can be factorised as 
\begin{equation}
T A_0=Q_0^*K Q_0,
\label{ccs31}
\end{equation}
where $K $ is a compact operator in $\calH_0$. 
\item
The operator $M_0=Q_0T^*Q_0^{-1}$ is bounded  on $\calH_0$ and the operator $M_0^2$  is compact. 
\item
The operator $M=Q_0J^*JQ_0^{-1}$ is bounded. 
\item
One has 
$$
A_0 (J^*J -I) A_0 =Q_0^*\wt{K}Q_0 
$$
with a compact operator  $\wt{K}$  in $\calH_0$.
\item
The operator $JA_0^2J^*- A^2$ is compact. 
\end{enumerate}
\end{assumption}

We  do not require that $\Ker J=\{0\}$; 
it turns out that instead of this it suffices to impose a much weaker Assumption~\ref{ass2}(ii).

Our spectral results are formulated in the following assertion.

\begin{theorem}\label{thm.ccs1}
Under Assumption~$\ref{ass2}$ one has:
\begin{enumerate}[\rm (i)]
\item
The a.c.\  spectrum of  the operator $A$ on $\Delta$ has the same multiplicity as that of 
the operator $A_{0}$.
\item
The singular continuous spectrum of $A$ on $\Delta$ is empty. 
\item
The eigenvalues of $A$ in $\Delta$ have finite multiplicities and 
can accumulate only to the endpoints of $\Delta$. 
\item
Let   
$$Q=Q_0J^*:\calH \to\calH_0. $$
Then the operator valued function $QR(z) Q^*$ is H\"older continuous
$($in the operator norm$)$ in $z$ for $\pm \Im z\geq 0$ and 
$\Re z\in\Delta \setminus \sigma_p(A)$. 
\end{enumerate}
\end{theorem}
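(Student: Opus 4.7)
The plan is to adapt the strategy from the single-channel case (Propositions~\ref{prp.b1}, \ref{prp.b1a}) to the two-space setting, using the resolvent identity \eqref{bb3} in place of \eqref{2.0}. First I would prove part~(iv) (the limiting absorption principle) by recasting \eqref{bb3} as a Fredholm equation in $\calH_0$ and applying the analytic Fredholm alternative (Proposition~\ref{Fred}). Parts~(ii) and~(iii) then follow by the same reasoning as in the sketch after Proposition~\ref{prp.b1a}. Finally, part~(i) is obtained via scattering theory for the auxiliary identification $\widetilde J := J A_0$.

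The key algebraic step is to multiply \eqref{bb3} on the left by $Q_0 J^*$ and on the right by $Q_0^*$. From Assumption~\ref{ass2}(vii) we have $Q_0 J^* J = M Q_0$; from Assumption~\ref{ass2}(vi) we get $T Q_0^* = Q_0^* M_0^*$; combined with $T A_0 = Q_0^* K Q_0$ (Assumption~\ref{ass2}(v)) and the trivial identity $T R_0(z) = z^{-1}(T A_0 R_0(z) - T)$ (legal since $0\notin\Delta$), this yields
\[
T R_0(z) Q_0^* = Q_0^* L(z), \qquad L(z) := z^{-1}\bigl(K Q_0 R_0(z) Q_0^* - M_0^*\bigr).
\]
A short computation then converts \eqref{bb3} into the Fredholm-type equation
\[
Y(z)\bigl(I + L(z)\bigr) = M Q_0 R_0(z) Q_0^*, \qquad Y(z) := Q R(z) Q^*.
\]
By Proposition~\ref{Privalov}, $Q_0 R_0(z) Q_0^*$ is H\"older continuous up to the real axis on $\Delta$, so both the right-hand side and $L(z)$ are H\"older continuous. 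Although $L(z)$ itself is not compact, $L(z)^2$ is: the cross-terms all involve the compact operator $K$, while the remaining contribution $z^{-2} M_0^{*2}$ is compact by Assumption~\ref{ass2}(vi). Hence Proposition~\ref{Fred} can be applied with $p = 2$.

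The delicate point is to check that $-1$ is not an eigenvalue of $L(z)$ for $\Im z \ne 0$. If $g + L(z) g = 0$, then multiplying by $z Q_0^*$ and using $Q_0^* K Q_0 = T A_0$ and $Q_0^* M_0^* = T Q_0^*$ converts the equation into $T A_0 R_0(z) Q_0^* g = (T - z I) Q_0^* g$. Setting $\psi := R_0(z) Q_0^* g$ so that $Q_0^* g = (A_0 - z)\psi$ and expanding reduces this to $z (A_0 + T - z) \psi = 0$, whence $(A_0 + T - z) \psi = 0$ since $0 \notin \Delta$. Applying $J$ and using \eqref{bb2} in the form $J(A_0 + T) = A J$ gives $A J \psi = z J \psi$; self-adjointness of $A$ and $\Im z \ne 0$ force $J \psi = 0$; Assumption~\ref{ass2}(ii) then yields $\psi = 0$, and Assumption~\ref{ass2}(iii) gives $g = 0$. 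Proposition~\ref{Fred} now delivers H\"older continuity of $(I + L(z))^{-1}$, and hence of $Y(z) = Q R(z) Q^*$, up to $\pm \Im z = 0$, $\Re z \in \Delta$, away from an exceptional closed null set $\calN_\pm$. Because $\gamma > 1/2$, the arguments sketched after Proposition~\ref{prp.b1a} identify $\calN_+ = \calN_- = \sigma_p(A) \cap \Delta$ and show that these eigenvalues have finite multiplicities with no interior accumulation, yielding (ii), (iii), (iv).

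For part~(i), set $\widetilde J := J A_0$. Multiplying \eqref{bb2} on the right by $A_0$ gives the factorisation $A \widetilde J - \widetilde J A_0 = J T A_0 = Q^* K Q_0$ required by Proposition~\ref{KL}. The $A_0$-smoothness of $Q_0$ is given, and the $A$-smoothness of $Q$ on compact subintervals of $\Delta \setminus \sigma_p(A)$ follows from part~(iv). Hence Proposition~\ref{KL} yields the wave operators $W_\pm(A, A_0; \widetilde J, \Delta')$ for every compact $\Delta' \subset \Delta \setminus \sigma_p(A)$. Assumption~\ref{ass2}(viii) together with Lemma~\ref{iso} applied with $\varphi(\lambda) = \lambda^2$ gives $W_\pm^* W_\pm = A_0^2 E_0^\ac(\Delta)$; symmetrically, Assumption~\ref{ass2}(ix) applied to the adjoint triple produces $W_\pm W_\pm^* = A^2 E^\ac(\Delta)$. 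Since $0 \notin \Delta$, both $A_0$ and $A$ are boundedly invertible on the relevant a.c.\ subspaces, and the identity $\widetilde J = J A_0$ allows one to replace $\widetilde J$ by $J$, producing genuinely isometric and complete wave operators $W_\pm(A, A_0; J, \Delta)$; their intertwining property then proves part~(i). The main obstacle throughout is that $L(z)$ itself is not compact---only $L(z)^2$ is---so Proposition~\ref{Fred} must be invoked with $p = 2$, and the verification that $-1 \notin \spec(L(z))$ for $\Im z \ne 0$ is the crucial place where the carefully crafted Assumption~\ref{ass2}(ii) (used in lieu of $\Ker J = \{0\}$) enters.
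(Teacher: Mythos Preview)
Your overall architecture matches the paper's: derive the Fredholm equation $QR(z)Q^*(I+L(z))=MQ_0R_0(z)Q_0^*$ (the paper writes $G_0(z)$ for your $L(z)$), verify triviality of the homogeneous equation for $\Im z\neq0$ via Assumption~\ref{ass2}(ii), apply Proposition~\ref{Fred} with $p=2$, and handle part~(i) through the auxiliary identification $\widetilde J=JA_0$ exactly as you describe.

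The genuine gap is the sentence ``the arguments sketched after Proposition~\ref{prp.b1a} identify $\calN_+=\calN_-=\sigma_p(A)\cap\Delta$''. In the single-channel case that sketch rests on the identity
\[
\Im\bigl((I+KQ_0R_0(z)Q_0^*)g,\,Q_0R_0(z)Q_0^*g\bigr)=-\varepsilon\,\norm{R_0(z)Q_0^*g}^2,
\]
from which $\varepsilon\norm{R_0(\lambda\pm i\varepsilon)Q_0^*g}^2\to0$ and hence $d(E_0(-\infty,\lambda)Q_0^*g,Q_0^*g)/d\lambda=0$ follow directly. In the two-space setting the analogous identity, obtained from $AJ=J(A_0+T)$ and self-adjointness of $A$, reads
\[
\Im\bigl((I+L(z))g,\,MQ_0R_0(z)Q_0^*g\bigr)=-\varepsilon\,\norm{JR_0(z)Q_0^*g}^2,
\]
with $J$ on the right. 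To recover information about $R_0(z)Q_0^*g$ itself (which is what one needs in order to conclude $\widehat f(\lambda)=0$ and then build an eigenvector), the paper shows separately that $((J^*J-I)R_0(z)Q_0^*g,R_0(z)Q_0^*g)$ extends continuously to the cut, and this step relies essentially on Assumption~\ref{ass2}(viii). You invoke~(viii) only for part~(i); without it here, the inclusion $\calN_\pm\subset\sigma_p(A)$ does not follow, and parts~(ii)--(iv) collapse.

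A second omission: Assumption~\ref{ass2}(i) never appears in your argument for (ii)--(iv). The paper needs $\Ker J^*=\{0\}$ twice---first so that $\Ran Q^*$ is dense in $\calH$ (otherwise H\"older continuity of $QR(z)Q^*$ says nothing about the singular spectrum of $A$), and second in the finite-multiplicity argument, where an eigenvector $\psi$ of $A$ is sent to a nonzero solution of the adjoint homogeneous equation via $g=Q_0J^*\psi$.
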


Scattering theory is described in the following assertion.

\begin{theorem}\label{thm.scat}
Under Assumption~$\ref{ass2}$ one has:
\begin{enumerate}[\rm (i)]
\item
The wave operators $W_\pm(A,A_0;J,\Delta)$ exist and enjoy the intertwining property
$$
W_\pm(A,A_0;J,\Delta)A_0 = AW_\pm(A, A_0;J,\Delta).
$$

\item
The   operators $W_\pm(A,A_0;J,\Delta)$   are isometric on $\Ran E_{0}(\Delta)$ and 
are complete:
\begin{align}
W_\pm ^*(A,A_0;J,\Delta) W_\pm(A,A_0;J,\Delta)&=E_{0}(\Delta),
\label{ccs1}
\\
W_\pm(A,A_0;J,\Delta)W_\pm^*(A,A_0;J,\Delta) &=E^\ac(\Delta).
\label{ccs2}
\end{align}
\end{enumerate}
\end{theorem}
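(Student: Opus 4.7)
My plan is to construct wave operators first for the modified identification $\wt{J} = JA_0$, which is accessible via Proposition~\ref{KL} and the limiting absorption principle from Theorem~\ref{thm.ccs1}(iv), and then to pass back to $J$ by exploiting that $0 \notin \overline{\Delta}$ makes $A_0$ boundedly invertible on $\Ran E_0(\Delta)$.

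For existence, I would first multiply the basic factorisation $AJ - JA_0 = JT$ on the right by $A_0$ and invoke Assumption~\ref{ass2}(v) to get
\begin{equation*}
A \wt{J} - \wt{J} A_0 = JTA_0 = Q^* K Q_0, \qquad Q = Q_0 J^*.
\end{equation*}
By Assumption~\ref{ass2}(iv) together with Proposition~\ref{Privalov}, $Q_0$ is Kato-smooth with respect to $A_0$ on compact subintervals of $\Delta$, while Theorem~\ref{thm.ccs1}(iv) similarly gives Kato-smoothness of $K^*Q$ with respect to $A$ on compact subintervals of $\Delta \setminus \sigma_p(A)$. Proposition~\ref{KL} then produces both $W_\pm(A, A_0; \wt{J}, \delta)$ and $W_\pm(A_0, A; \wt{J}^*, \delta)$ for such $\delta$, and since by Theorem~\ref{thm.ccs1}(iii) the excluded eigenvalue set has Lebesgue measure zero, a routine exhaustion argument extends existence to all of $\Delta$. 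From the very definition of $\wt{J}$ one reads off
\begin{equation*}
W_\pm(A, A_0; J, \Delta) = W_\pm(A, A_0; \wt{J}, \Delta)\, A_0^{-1} E_0^\ac(\Delta),
\end{equation*}
where $A_0^{-1}$ is defined via the functional calculus on $\Ran E_0(\Delta)$; existence of $W_\pm(A, A_0; J, \Delta)$ and the intertwining property \eqref{eq:Int} are then immediate.

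I would handle isometricity and completeness symmetrically. Assumption~\ref{ass2}(viii) rewrites as $\wt{J}^* \wt{J} - A_0^2 = Q_0^* \wt{K} Q_0 \in {\goth S}_\infty$, so Lemma~\ref{iso} applied with $\varphi(\lambda) = \lambda^2$ delivers $\|W_\pm(A,A_0;\wt{J}, \Delta) f\|^2 = \|A_0 f\|^2$ on $\Ran E_0^\ac(\Delta)$; combining with the identity above gives $\|W_\pm(A,A_0;J,\Delta) g\|^2 = \|g\|^2$ for $g \in \Ran E_0(\Delta)$, which is \eqref{ccs1}. For \eqref{ccs2}, Assumption~\ref{ass2}(ix) reads $\wt{J}\wt{J}^* - A^2 \in {\goth S}_\infty$, so the analogue of Lemma~\ref{iso} with the roles of $A_0, A$ and of $J, J^*$ swapped yields $\|W_\pm(A_0, A; \wt{J}^*, \Delta) h\|^2 = \|Ah\|^2$ on $\Ran E^\ac(\Delta)$. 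Taking the adjoint of the existence identity and using $W_\pm(A,A_0;\wt{J},\Delta)^* = W_\pm(A_0, A; \wt{J}^*, \Delta)$ together with the fact that $W_\pm(A_0, A; \wt{J}^*, \Delta) h \in \Ran E_0^\ac(\Delta)$ (by intertwining), I obtain $W_\pm(A,A_0;J,\Delta)^* h = A_0^{-1} W_\pm(A_0, A; \wt{J}^*, \Delta) h$; a second application of intertwining converts this to $W_\pm(A_0, A; \wt{J}^*, \Delta) A^{-1} h$, whose norm equals $\|h\|$, giving \eqref{ccs2}.

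I expect the main technical subtlety to lie in the careful bookkeeping for the interplay between $\wt{J}$ and $J$: one must verify that $A_0^{-1}$ and $A^{-1}$ genuinely make sense where they appear, that they commute through the relevant spectral projections in the right way, and that the intertwining property transports the isometry relations correctly between the two sides. This is precisely where the hypothesis $0 \notin \overline{\Delta}$ is crucial. The extension of the Kato--Lavine conclusion from $\Delta \setminus \sigma_p(A)$ to $\Delta$ itself is by contrast rather painless, thanks to the measure-zero structure of $\sigma_p(A) \cap \Delta$ supplied by Theorem~\ref{thm.ccs1}(iii).
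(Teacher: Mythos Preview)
Your strategy matches the paper's almost exactly: introduce the auxiliary identification $\wt J=JA_0$, use the factorisation $A\wt J-\wt J A_0=Q^*KQ_0$ together with the limiting absorption principle to feed into the Kato--Lavine theorem, extract the relations $W_\pm^*(\wt J)W_\pm(\wt J)=A_0^2E_0(\Delta)$ and $W_\pm(\wt J)W_\pm^*(\wt J)=A^2E^\ac(\Delta)$ from Lemma~\ref{iso} with $\varphi(\lambda)=\lambda^2$, and then transfer back to $J$.

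The one point where you deviate is in the transfer step: you repeatedly invoke $0\notin\overline\Delta$ so that $A_0^{-1}$ and $A^{-1}$ are bounded on $\Ran E_0(\Delta)$ and $\Ran E^\ac(\Delta)$, but the paper's standing hypothesis is only $\Delta\subset\R\setminus\{0\}$, which allows $0$ to be an endpoint. The paper sidesteps this by arguing via density rather than inversion: since $e^{itA}Je^{-itA_0}E_0^\ac(\Delta)$ is uniformly bounded and the strong limit exists on $\Ran A_0$ (this is precisely $W_\pm(A,A_0;\wt J,\Delta)$), it exists on $\overline{\Ran A_0}\supset\calH_0^\ac$, yielding $W_\pm(A,A_0;J,\Delta)A_0=W_\pm(A,A_0;\wt J,\Delta)$ and, by intertwining, $AW_\pm(A,A_0;J,\Delta)=W_\pm(A,A_0;\wt J,\Delta)$. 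The norm identities \eqref{ccs1} and \eqref{ccs2} are then established first on $\Ran A_0$ (resp.\ $\Ran A$) and extended to all of $\calH_0$ (resp.\ $\calH$) by observing that both sides vanish on $\Ker A_0$ (resp.\ $\Ker A$). This is a routine adjustment of your argument and works under the weaker hypothesis.
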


\subsection{Fredholm resolvent equations}

Let us proceed from the resolvent identity \eqref{bb3} 
which we consider as an equation for the operator $R(z)J$. 
This equation is Fredholm if 
\begin{equation}
(TR_0(z))^p\in {\goth S}_\infty \quad \text{ for some  } \;
p\in \mathbb N.
\label{ccs3}
\end{equation}
We first consider the homogeneous equation corresponding to \eqref{bb3}. 
The argument of the following lemma will be used 
several times in what follows.

\begin{lemma}\label{FR11}
Let Assumption~$\ref{ass2}\rm (ii)$ hold. 
Then for any $z$, $\Im z\not=0$, the equation 
 \begin{equation}
f+TR_0(z)f=0
\label{eq:fr2}\end{equation}
has only the trivial solution $f=0$.
\end{lemma}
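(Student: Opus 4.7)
The plan is to convert the equation \eqref{eq:fr2} into an eigenvalue equation for $A_0+T$ on a suitable vector, then transport it via $J$ to an eigenvalue equation for $A$, and finally use self-adjointness of $A$ together with Assumption~\ref{ass2}(ii) to conclude.

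More concretely, suppose $f\in\calH_0$ satisfies \eqref{eq:fr2} and set $g=R_0(z)f$, so that $f=(A_0-z)g$. Substituting into $f+TR_0(z)f=0$ gives
$$
(A_0-z)g+Tg=0,
\q\text{i.e.,}\q (A_0+T-z)g=0.
$$
Now I apply the identification $J$ on the left. The factorisation \eqref{bb2} can be rewritten as $AJ=J(A_0+T)$, so
$$
(A-z)Jg=J(A_0+T-z)g=0.
$$
Since $A$ is self-adjoint and $\Im z\neq 0$, the operator $A-z$ is boundedly invertible, which forces $Jg=0$, i.e.\ $g\in\Ker J$.

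At this point $g$ lies simultaneously in $\Ker J$ and in $\Ker(A_0+T-z)$. Because $\Im z\neq0$ we have $z\neq0$, so Assumption~\ref{ass2}(ii) applies and yields $g=0$. Hence $f=(A_0-z)g=0$, as required. The argument is almost entirely algebraic; the only subtle point is that one must exploit the hypothesis $\Ker J\cap\Ker(A_0+T-z)=\{0\}$ precisely in the regime where it is available, namely $z\neq0$, which is guaranteed here by $\Im z\neq 0$. I do not anticipate any serious obstacle; this lemma is really the payoff explaining why Assumption~\ref{ass2}(ii) was formulated the way it was, and Fredholmness of \eqref{bb3} (to be invoked elsewhere via \eqref{ccs3}) will be what makes this triviality-of-kernel statement useful.
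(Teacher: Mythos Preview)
Your proof is correct and follows essentially the same route as the paper: set $g=R_0(z)f$, derive $(A_0+T-z)g=0$, apply $J$ via \eqref{bb2} to obtain $(A-z)Jg=0$, use self-adjointness of $A$ to conclude $Jg=0$, and then invoke Assumption~\ref{ass2}(ii) to get $g=0$ and hence $f=0$. The only cosmetic difference is your explicit remark that $\Im z\neq0$ forces $z\neq0$, which the paper leaves implicit.
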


\begin{proof}
Set $\varphi=R_0(z)f$. It follows from \eqref{eq:fr2} that
\begin{equation}
 (A_{0} -z)\varphi+T \varphi=0.
\label{eq:fr3}\end{equation} 
Applying the operator $J$ to this equation and using 
\eqref{bb2}, we see that $AJ\varphi=zJ\varphi$. Hence $J\varphi=0$ 
because the operator $A$ is self-adjoint. Moreover, $\varphi\in \Ker(A_0+T-z)$ according to equation \eqref{eq:fr3}.  Therefore
Assumption~\ref{ass2}(ii) implies that $\varphi=0$ whence $f=0$.
\end{proof}

In view of equation \eqref{bb3}, this directly leads to the following assertion.

\begin{lemma}\label{FRed}
If Assumption~$\ref{ass2} \rm (ii)$ and inclusion \eqref{ccs3} are satisfied, then  
\begin{equation}
R(z)J=JR_0(z)(I+TR_0(z))^{-1}, 
\quad 
\Im z\not=0,
\label{ccs4}
\end{equation}
where the inverse operator in the r.h.s.\  exists and is bounded.
\end{lemma}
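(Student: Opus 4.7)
The plan is to read \eqref{bb3} as an equation for $R(z)J$ and simply invert the operator $I + TR_0(z)$ that appears on its right. Once we know that $(I+TR_0(z))^{-1}$ exists and is bounded on $\calH_0$ for $\Im z \neq 0$, multiplying \eqref{bb3} from the right by it yields \eqref{ccs4} immediately, so the entire content of the lemma reduces to proving this invertibility.

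For the invertibility, the two ingredients are already available. First, by hypothesis \eqref{ccs3}, $(TR_0(z))^p$ lies in ${\goth S}_\infty$. Using the polynomial identity
\[
(I+TR_0(z))\sum_{k=0}^{p-1}(-TR_0(z))^k = I + (-1)^{p-1}(TR_0(z))^p,
\]
together with its mirror image obtained by reversing the order of the two factors, one sees that $I + TR_0(z)$ has a parametrix, namely $\sum_{k=0}^{p-1}(-TR_0(z))^k$, with both products differing from the identity by a compact operator; hence $I + TR_0(z)$ is Fredholm of index zero. This is also what lies behind the usual Fredholm-alternative statement of Proposition~\ref{Fred}, applied here at a fixed $z$ away from the real axis. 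Second, by Lemma~\ref{FR11} and Assumption~\ref{ass2}(ii), $\Ker(I+TR_0(z))=\{0\}$ for $\Im z \neq 0$. A Fredholm operator of index zero with trivial kernel is bijective, and the bounded inverse theorem then gives boundedness of $(I+TR_0(z))^{-1}$.

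With the bounded inverse in hand, multiplying \eqref{bb3} on the right by $(I + TR_0(z))^{-1}$ produces the required identity \eqref{ccs4}. I do not foresee any real obstacle: the nontrivial analytic input, namely triviality of the kernel, has already been isolated in Lemma~\ref{FR11}, which uses self-adjointness of $A$ to pass from $AJ\varphi = zJ\varphi$ with $\Im z \neq 0$ to $J\varphi = 0$ and then invokes Assumption~\ref{ass2}(ii) to conclude $\varphi = 0$. The Fredholm-alternative step above is then entirely routine.
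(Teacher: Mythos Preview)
Your proposal is correct and follows essentially the same approach as the paper: the paper simply remarks that Lemma~\ref{FR11} combined with equation~\eqref{bb3} ``directly leads'' to the assertion, leaving the Fredholm-alternative step implicit, while you have spelled out that step (parametrix from the polynomial identity, index zero, trivial kernel $\Rightarrow$ bounded inverse) explicitly. There is nothing to add.
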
   

Equation \eqref{ccs4} is convenient for $\Im z\not=0$. 
In order to study the resolvent $R(z)$ when $z$ approaches
the real axis, we need to sandwich this resolvent between the operators $Q$, $Q^*$
(recall that $Q=Q_0J^*$) and rearrange equation \eqref{bb3}. This should be compared to passing from \eqref{2.0} to \eqref{2.1}
in the ``single channel'' setting. In the present case, the algebra is slightly more 
complicated. 

\begin{lemma}
Let Assumption~$\ref{ass2}${\rm(v--vii)} hold. 
Then for all $\Im z\not=0$, we have
\begin{equation}
QR(z)Q^*(I+G_0(z))
=
MQ_0R_0(z)Q_0^*,
\label{ccs6}
\end{equation}
where the operator $G_0(z)$ is given by\footnote{Formally, $G_0(z)=(Q_0^*)^{-1}TR_0(z)Q_0^*$.} 
\begin{equation}
G_0(z)=- z^{-1} M_0^*+ z^{-1}  K  Q_0 R_0(z)Q_0^*
\label{ccs7}
\end{equation}
and hence $G_0^2(z) \in {\goth S}_{\infty}$. 
\end{lemma}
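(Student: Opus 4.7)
The plan is to multiply the resolvent identity \eqref{bb3} by $Q_0 J^*$ on the left and by $Q_0^*$ on the right. Using $Q = Q_0 J^*$ and $Q^* = JQ_0^*$, the left-hand side expands as
$$
QR(z)Q^* + QR(z)\bigl(JTR_0(z)Q_0^*\bigr),
$$
while on the right-hand side Assumption~\ref{ass2}(vii) gives $Q_0 J^* J = MQ_0$, so that the right-hand side becomes $MQ_0 R_0(z) Q_0^*$. Hence \eqref{ccs6} will follow as soon as one identifies $JTR_0(z)Q_0^*$ with $Q^* G_0(z)$ for the operator $G_0(z)$ of \eqref{ccs7}.

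The main algebraic step is therefore the computation of $(Q_0^*)^{-1} T R_0(z) Q_0^*$. I would apply the trivial identity $A_0 R_0(z) = I + zR_0(z)$ to split
$$
T R_0(z) Q_0^* = z^{-1}\bigl( T A_0 R_0(z) Q_0^* - T Q_0^* \bigr).
$$
The factorisation \eqref{ccs31} from Assumption~\ref{ass2}(v) turns the first term into $Q_0^* K Q_0 R_0(z) Q_0^*$; for the second term I would insert $Q_0^* (Q_0^*)^{-1}$ and read off $(Q_0^*)^{-1} T Q_0^* = M_0^*$ from the definition $M_0 = Q_0 T^* Q_0^{-1}$ in Assumption~\ref{ass2}(vi). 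Factoring $Q_0^*$ on the left and then prepending $J$ produces the required $Q^* G_0(z)$.

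For the compactness of $G_0(z)^2$, I would decompose $G_0(z) = -z^{-1} M_0^* + C(z)$ with $C(z) = z^{-1} K Q_0 R_0(z) Q_0^*$. The term $C(z)$ belongs to ${\goth S}_\infty$ because $K$ is compact by Assumption~\ref{ass2}(v), and the two-sided ideal property makes every cross term in $G_0(z)^2$, together with $C(z)^2$, compact. The remaining piece is $z^{-2}(M_0^*)^2 = z^{-2}(M_0^2)^*$, which is compact by Assumption~\ref{ass2}(vi).

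I do not expect a substantive obstacle: the identity \eqref{ccs6} is essentially a bookkeeping rearrangement of \eqref{bb3}, and the compactness claim is a routine ideal argument. The only point requiring care is that $(Q_0^*)^{-1}$ is only defined on the dense range $\Ran Q_0^*$, but Assumptions~\ref{ass2}(v,vi) are precisely tailored so that the composite symbols $(Q_0^*)^{-1} T Q_0^*$ and $(Q_0^*)^{-1} T A_0 R_0(z) Q_0^*$ that appear in the manipulation extend to everywhere-defined bounded operators on $\calH_0$.
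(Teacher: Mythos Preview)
Your proposal is correct and follows essentially the same route as the paper's own proof: multiply \eqref{bb3} by $Q$ on the left and $Q_0^*$ on the right, establish the key identity $TR_0(z)Q_0^* = Q_0^* G_0(z)$ via the resolvent identity $zR_0(z) = -I + A_0R_0(z)$ together with Assumptions~\ref{ass2}(v,vi), and use Assumption~\ref{ass2}(vii) in the form $QJ = MQ_0$ for the right-hand side. Your compactness argument for $G_0(z)^2$ is also the same as the paper's, only slightly more explicit; the one cosmetic difference is that the paper writes $TQ_0^* = Q_0^* M_0^*$ directly (taking adjoints in $M_0 Q_0 = Q_0 T^*$) rather than phrasing it through $(Q_0^*)^{-1}$.
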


\begin{proof}
First, we multiply \eqref{bb3} by $Q$ on the left and by $Q_0^*$ on the right: 
\begin{equation}
QR(z)Q^* + QR(z) J TR_0(z) Q_0^* = QJR_0(z)Q_0^* .
\label{eq:ss}\end{equation}

Let us show that
\begin{equation}
    TR_0(z) Q_0^*=Q^*_{0} G_{0}(z).
\label{eq:s}\end{equation}
Indeed, using the identity 
\begin{equation}
z R_0(z)=-I + A_0R_0(z) ,
\label{eq:RId}\end{equation}
we see that
\begin{equation}
z   TR_0(z) Q_0^*= -T  Q_0^* + (T A_{0}) R_0(z) Q_0^*. 
\label{eq:RId1}\end{equation}
By Assumption~\ref{ass2}(vi), we have
$$
T  Q_0^*=   Q_0^* M_0^* .
$$
 Using also equality \eqref{ccs31}, we can rewrite \eqref{eq:RId1} as 
$$
TR_0(z) Q_0^*= z^{-1}Q_0^*(- M_0^* +   K  Q_{0} R_0(z) Q_0^*).
$$
By definition \eqref{ccs7},  this yields \eqref{eq:s}.

Substituting \eqref{eq:s} into the second term in the l.h.s.\  of \eqref{eq:ss} and using that
$J Q^*_{0}= Q^* $,  we see that this term equals 
$QR(z)   Q^*  G_{0}(z)  $. In the r.h.s.\  of \eqref{eq:ss}, we use the fact that, by Assumption~\ref{ass2}(vii),
\begin{equation}
    QJ= Q_{0}J^*J= MQ_{0}.
\label{eq:s1}\end{equation}
   Therefore \eqref{eq:ss} yields identity \eqref{ccs6}.

The operator $G_0^2(z)$
is compact because, by Assumption~\ref{ass2}(v,vi), 
both $K$  and $M_0^2$ are compact. 
\end{proof}

The resolvent equation \eqref{ccs6} allows us to study the boundary values of $R(z)$ as $\Im z\to0$. 
It should be compared to the resolvent equation \eqref{2.1} in the ``single channel'' setting. 
The main difference between the single channel resolvent equation and 
the multichannel one is that in the single channel case, the operator
$KQ_0R_0(z)Q_0^*$ is compact, whereas under Assumption~\ref{ass2} we cannot
  guarantee the compactness of $G_0(z)$; instead, we have the 
compactness of the square $G_0^2 (z)$. 
In any case, \eqref{ccs6} is a Fredholm equation for $QR(z)Q^*$ amenable to analysis
for $\Im z\to 0$.

\begin{lemma}\label{lma.ccs3}
Let the operator $G_0(z)$ be defined by formula \eqref{ccs7}. Then
under Assumption~$\ref{ass2} {\rm(ii, iii, v-vii)}$, the 
equation
\begin{equation}
g+G_0(z)g=0, 
\quad 
\Im z\not=0,
\label{ccs8}
\end{equation}
has only the trivial solution $g=0$. 
\end{lemma}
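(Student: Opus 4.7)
The plan is to reduce equation \eqref{ccs8} to the homogeneous equation \eqref{eq:fr2} treated in Lemma~\ref{FR11}, using the formal identity $Q_0^*G_0(z)=TR_0(z)Q_0^*$ suggested by the footnote after \eqref{ccs6}, and then conclude via the injectivity of $Q_0^*$ from Assumption~\ref{ass2}(iii).

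First, I would verify the algebraic identity
\begin{equation*}
Q_0^*G_0(z)=TR_0(z)Q_0^*.
\end{equation*}
By definition \eqref{ccs7},
\begin{equation*}
Q_0^*G_0(z)=-z^{-1}Q_0^*M_0^*+z^{-1}Q_0^*KQ_0R_0(z)Q_0^*.
\end{equation*}
For the first term, from Assumption~\ref{ass2}(vi) the relation $M_0Q_0=Q_0T^*$ holds on $\Ran Q_0^{-1}$; taking adjoints yields $Q_0^*M_0^*=TQ_0^*$ as bounded operators. For the second term, \eqref{ccs31} gives $Q_0^*KQ_0=TA_0$. Combining and using the resolvent identity \eqref{eq:RId}, we get
\begin{equation*}
Q_0^*G_0(z)=z^{-1}T\bigl(-I+A_0R_0(z)\bigr)Q_0^*=T R_0(z) Q_0^*,
\end{equation*}
as claimed.

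Now suppose $g+G_0(z)g=0$ for some $\Im z\neq0$. Apply $Q_0^*$ on the left and use the identity just established to obtain $f+TR_0(z)f=0$ with $f:=Q_0^*g\in\calH_0$. Since Assumption~\ref{ass2}(ii) is in force, Lemma~\ref{FR11} applies and yields $f=0$, i.e.\ $Q_0^*g=0$. By Assumption~\ref{ass2}(iii) we have $\Ker Q_0^*=\{0\}$, hence $g=0$.

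The argument is almost entirely algebraic; the only substantive point is the identity $Q_0^*G_0(z)=TR_0(z)Q_0^*$, which requires a careful reading of Assumption~\ref{ass2}(v,vi) together with the convention in the footnote about operators such as $M_0=Q_0T^*Q_0^{-1}$ being defined first on $\Ran Q_0$ and then extended by boundedness. Once this identity is secured, everything else follows from results already proved in the paper.
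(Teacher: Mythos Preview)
Your proof is correct and follows essentially the same route as the paper. The identity $Q_0^*G_0(z)=TR_0(z)Q_0^*$ that you derive is exactly relation \eqref{eq:s}, already established in the proof of the preceding lemma; the paper simply cites it, applies $Q_0^*$ to \eqref{ccs8}, invokes Lemma~\ref{FR11}, and concludes via $\Ker Q_0^*=\{0\}$.
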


\begin{proof}
Applying the operator $Q_0^*$ to equation \eqref{ccs8} and taking identity   \eqref{eq:s}   into account,  we obtain equation  \eqref{eq:fr2}   for $f=Q_0^*g$.     By Lemma~\ref{FR11} we have $f=0$. 
This ensures that $g=0$ because $\Ker Q_0^*=\{0\}$. 
\end{proof}

Since $G_0^2 (z)\in{\goth S}_{\infty}$, Lemma~\ref{lma.ccs3}
implies  the following assertion.

\begin{lemma}\label{lma.ccs3h}
Under the hypothesis of Lemma~$\ref{lma.ccs3}$, we have the representation
\begin{equation}
QR(z)Q^*=MQ_0R_0(z)Q_0^*(I+G_0(z))^{-1}, 
\quad 
\Im z\not=0,
\label{ccs10}
\end{equation}
where the inverse operator in the r.h.s.\  exists and is bounded. 
\end{lemma}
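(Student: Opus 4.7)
The plan is to combine the Fredholm equation \eqref{ccs6} with the injectivity statement of Lemma~\ref{lma.ccs3} and deduce that $I+G_0(z)$ has a bounded inverse; then \eqref{ccs10} is obtained by multiplying \eqref{ccs6} on the right by $(I+G_0(z))^{-1}$.

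The main issue is that $G_0(z)$ itself need not be compact — only $G_0^2(z)$ is. To handle this, I would use the factorisation
$$
(I+G_0(z))(I-G_0(z))=(I-G_0(z))(I+G_0(z))=I-G_0^2(z).
$$
Since $G_0^2(z)\in{\goth S}_\infty$, the right-hand side is a compact perturbation of the identity, and hence Fredholm of index zero. The left-hand side then shows that $I+G_0(z)$ has closed range and is Fredholm, with index zero (indices multiply, and $I-G_0(z)$ has the same index as $I+G_0(z)$ by analogous reasoning, forcing both to have index zero since their product has index zero). Injectivity of $I+G_0(z)$ is precisely Lemma~\ref{lma.ccs3}, so by the Fredholm alternative $I+G_0(z)$ is boundedly invertible for every $z$ with $\Im z\neq 0$.

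With the invertibility established, the proof finishes by rewriting \eqref{ccs6} as
$$
QR(z)Q^*=MQ_0R_0(z)Q_0^*\bigl(I+G_0(z)\bigr)^{-1},
$$
which is \eqref{ccs10}.

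The hard part is the Fredholm argument just sketched: one has to be careful that ``$G_0^2(z)$ compact'' still delivers the Fredholm alternative for $I+G_0(z)$. The factorisation $I-G_0^2=(I+G_0)(I-G_0)$ is the standard device that circumvents the lack of compactness of $G_0(z)$ itself, and it is exactly the same device that underlies the hypothesis ``$G_0(z)^p$ compact for some $p$'' in Proposition~\ref{Fred}. Apart from this observation, the argument is a direct algebraic manipulation of the previously derived identity \eqref{ccs6}.
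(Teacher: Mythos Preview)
Your proof is correct and matches the paper's intent: the paper simply records that the lemma follows from Lemma~\ref{lma.ccs3} together with $G_0^2(z)\in{\goth S}_\infty$, leaving the Fredholm-theoretic details you supply implicit (this is also the mechanism behind the hypothesis ``$G_0(z)^p$ compact'' in Proposition~\ref{Fred}, as you note). One minor correction: Fredholm indices \emph{add}, not multiply, under composition; the cleanest way to see that $I+G_0(z)$ has index zero is the homotopy $t\mapsto I+tG_0(z)$, $t\in[0,1]$, which stays Fredholm throughout (since $(tG_0(z))^2=t^2G_0^2(z)$ is compact) and starts at the identity.
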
 

\subsection{The limiting absorption principle and spectral consequences}

Let us now study equation \eqref{ccs10} as $z$ approaches the interval $\Delta$.
The first assertion is a direct consequence of
  Proposition~\ref{Privalov}.
  
\begin{lemma}\label{Priv}
Under    Assumption~$\ref{ass2}${\rm (iv)}  the operator 
valued functions $Q_0R_0(z)Q_0^*$ and hence $G_0(z)$ 
are H\"older continuous with exponent $\gamma\in (1/2,1)$ for $\Re z\in\Delta$, $\pm\Im z\geq 0$.
\end{lemma}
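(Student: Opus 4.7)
The plan is to recognise this statement as essentially a direct corollary of Proposition~\ref{Privalov}. By Assumption~\ref{ass2}(iv), the operator $Q_0$ is strongly $A_0$-smooth on $\Delta$ with an exponent $\gamma\in(1/2,1)$, so Proposition~\ref{Privalov} applies verbatim and yields the H\"older continuity of $Q_0R_0(z)Q_0^*$ in the operator norm, with the same exponent $\gamma$, for $\Re z\in\Delta$ and $\pm\Im z\geq 0$ (uniformly on compact subintervals of $\Delta$, as built into Definition~\ref{strsm}).

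The H\"older continuity of $G_0(z)$ then follows from its explicit formula \eqref{ccs7}. Since $\Delta\subset\R\setminus\{0\}$, on any compact subinterval $\delta\subset\Delta$ the scalar function $z\mapsto z^{-1}$ is smooth and uniformly bounded in the closed half-strip $\Re z\in\delta$, $\pm\Im z\geq 0$, and hence H\"older continuous there with any exponent. The operators $M_0^*$ and $K$ appearing in \eqref{ccs7} are fixed and bounded. Since the sum and product of uniformly bounded operator-valued H\"older continuous functions sharing a common exponent are again H\"older continuous with that exponent, both terms $-z^{-1}M_0^*$ and $z^{-1}K\,Q_0R_0(z)Q_0^*$ are H\"older continuous of exponent $\gamma$ on $\{\Re z\in\delta,\ \pm\Im z\geq 0\}$, and summing them delivers the claim for $G_0(z)$.

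There is no real obstacle here; the lemma is a packaging result whose role is to produce the input required by the analytic Fredholm alternative (Proposition~\ref{Fred}) in the forthcoming limiting absorption argument applied to \eqref{ccs10}. The only subtlety worth flagging is the use of $0\notin\Delta$, which is what makes $z^{-1}$ behave nicely on the relevant set; this is the same hypothesis already invoked via the identity \eqref{eq:RId} in deriving formula \eqref{ccs7} itself, so the two uses are consistent.
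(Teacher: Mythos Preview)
Your proposal is correct and matches the paper's own treatment: the paper states the lemma as a direct consequence of Proposition~\ref{Privalov}, and the H\"older continuity of $G_0(z)$ is indeed immediate from formula~\eqref{ccs7} together with the fact that $0\notin\Delta$ keeps $z^{-1}$ bounded and smooth on the relevant region. Your added remarks about the role of $0\notin\Delta$ and the algebra of H\"older continuous functions are accurate and simply make explicit what the paper leaves implicit.
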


 Let  the  set ${\cal N}_{\pm}\subset \Delta$ consist of the points $\lambda$ such that the equation
\begin{equation}
g +  G_0(\lambda\pm i0) g=0
\label{eq:HM}
\end{equation}
has a nontrivial solution $g\not=0$.
By    Proposition~\ref{Fred}, the  set ${\cal N}_{\pm} $   is closed and has the Lebesgue measure zero. The inverse operator in the r.h.s.\  of \eqref{ccs10} is continuous for $\pm \Im z\geq 0$ away from the set ${\cal N}_{\pm}$. Equation \eqref{ccs10} implies the same statement about the operator valued function $QR  (z) Q^* $. We also take into account   that $QR  (z) Q^* $ and $QR  (\bar{z}) Q^* $ are continuous simultaneously. 
Thus, combining Proposition~\ref{Fred} and Lemma~\ref{Priv}, we obtain the 
following assertion, which is known as the limiting absorption principle.

\begin{theorem}\label{limabs}

Let  Assumption~$\ref{ass2}${\rm (ii -- vii)} hold.
Then the  set ${\cal N}_{\pm}\subset \Delta$   is closed and has the Lebesgue measure zero. The operator-valued function $QR  (z) Q^* $ is H\"older continuous with the exponent $\gamma$   up to the cut 
along $\Delta$ away from the set ${\cal N}={\cal N}_{+}\cap {\cal N}_{-}$.
\end{theorem}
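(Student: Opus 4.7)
My plan is to apply the analytic Fredholm alternative (Proposition~\ref{Fred}) to the operator valued function $G_0(z)$ on the slit strip $\{z:\Re z\in\Delta,\ \Im z\neq 0\}$, and then to transfer the resulting boundary regularity to $QR(z)Q^*$ via the representation \eqref{ccs10}. At this stage nearly all of the analytical input has been assembled in the preceding lemmas, so the argument reduces largely to bookkeeping.

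First I would verify the hypotheses of Proposition~\ref{Fred} for $G_0(z)$. Analyticity off the real axis is immediate from \eqref{ccs7} and the analyticity of $R_0(z)$. For the compactness hypothesis, \eqref{ccs7} writes $G_0(z)$ as a sum of $-z^{-1}M_0^*$ and $z^{-1}KQ_0R_0(z)Q_0^*$; the second summand is compact by Assumption~\ref{ass2}(v) since $K$ is compact, while $(M_0^*)^2=(M_0^2)^*$ is compact by Assumption~\ref{ass2}(vi). Expanding $G_0(z)^2$, every cross term contains at least one compact factor, so $G_0(z)^2$ is compact and we may take $p=2$. The condition that $-1$ is not an eigenvalue of $G_0(z)$ for $\Im z\neq 0$ is precisely Lemma~\ref{lma.ccs3}. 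H\"older continuity of $G_0(z)$ of exponent $\gamma$ up to the cut along $\Delta$ is Lemma~\ref{Priv}.

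Proposition~\ref{Fred} then produces the closed, measure zero sets $\mathcal{N}_\pm\subset\Delta$, determined by the solvability of \eqref{eq:HM}, outside of which $(I+G_0(z))^{-1}$ extends H\"older continuously of exponent $\gamma$ up to the cut. Feeding this into \eqref{ccs10} and using that $MQ_0R_0(z)Q_0^*$ is likewise H\"older continuous up to the cut (by Lemma~\ref{Priv} together with the boundedness of $M$ from Assumption~\ref{ass2}(vii)), the product $QR(z)Q^*=MQ_0R_0(z)Q_0^*(I+G_0(z))^{-1}$ inherits H\"older continuity of exponent $\gamma$ on $\{\pm\Im z\geq 0,\ \Re z\in\Delta\}\setminus\mathcal{N}_\pm$.

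To refine the exceptional set from $\mathcal{N}_\pm$ individually to the intersection $\mathcal{N}=\mathcal{N}_+\cap\mathcal{N}_-$, I would use the identity $QR(\bar z)Q^*=(QR(z)Q^*)^*$, which follows from the self-adjointness of $A$. This shows that continuity from the upper half-plane at $\lambda$ is, after taking adjoints, equivalent to continuity from the lower half-plane at $\lambda$; hence the set where continuity of $QR(z)Q^*$ from above fails coincides with the set where continuity from below fails, and as each is contained in $\mathcal{N}_+$ and $\mathcal{N}_-$ respectively, both are contained in $\mathcal{N}$. The only conceptually nontrivial ingredient — and the essential novelty compared to the single channel situation of Proposition~\ref{prp.b1a} — is that $G_0(z)$ itself need not be compact, only its square is; this is a direct reflection of the block structure of $T$ in \eqref{eq:AA}, and it is what forces us to invoke the ``$G_0^p\in{\goth S}_\infty$'' form of the Fredholm alternative rather than the simpler compact case.
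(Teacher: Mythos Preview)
Your proof is correct and follows essentially the same route as the paper: apply Proposition~\ref{Fred} to $G_0(z)$ (with $p=2$, using Lemmas~\ref{Priv} and~\ref{lma.ccs3} for the hypotheses), transfer the resulting H\"older continuity of $(I+G_0(z))^{-1}$ to $QR(z)Q^*$ via \eqref{ccs10}, and then pass from $\mathcal{N}_\pm$ to $\mathcal{N}_+\cap\mathcal{N}_-$ using the symmetry $QR(\bar z)Q^*=(QR(z)Q^*)^*$. The paper states this argument more tersely in the paragraph preceding the theorem, but the logical content is the same.
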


Observe that the hypotheses of Theorem~\ref{limabs} do not exclude that, 
for example, $J=0$; then $Q=0$ and the statement of the theorem is vacuous. 
So in order to deduce from this theorem some spectral consequences for $A$,
we need  additional  assumptions such as $\Ker J^*=\{0\}$ 
(which is part (i) of Assumption~\ref{ass2}). 
Then the kernel of $Q=Q_0J^*$ is trivial and hence
the range of the operator $Q^*$ is dense in ${\cal H}$. 
So Theorem~\ref{limabs} implies the following result.

 \begin{corollary}\label{limabsc}
 Let Assumption~{\rm \ref{ass2}(i -- vii)} hold. 
Then, on the interval $\Delta$, the singular continuous spectrum 
and the eigenvalues of $A$ are contained in the set ${\cal N}$.
\end{corollary}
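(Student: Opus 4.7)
The plan is to combine Theorem~\ref{limabs} with the density of $\Ran Q^*$ in $\calH$ (which is where Assumption~\ref{ass2}(i) enters, in addition to the hypotheses of Theorem~\ref{limabs} itself). First I would verify this density. Since $Q=Q_0J^*$ and $\Ker Q_0=\{0\}$ by Assumption~\ref{ass2}(iii), one has $\Ker Q=\Ker J^*$, which is trivial by Assumption~\ref{ass2}(i). Therefore $\overline{\Ran Q^*}=(\Ker Q)^\perp=\calH$.

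Next I would exploit Theorem~\ref{limabs} via a Poisson integral argument. Fix $g\in\calH_0$ and let $\mu_g(B):=\|E(B)Q^*g\|^2$ denote the scalar spectral measure of $A$ at the vector $Q^*g$. The Herglotz transform of $\mu_g$ is exactly
$$
(QR(z)Q^*g,g)=\int_\R\frac{d\mu_g(\lambda)}{\lambda-z},
$$
so its imaginary part, evaluated at $z=\lambda+i\varepsilon$, is $\pi$ times the Poisson integral of $\mu_g$. By Theorem~\ref{limabs}, this Poisson integral has a continuous (indeed H\"older continuous) boundary limit on the open set $\Delta\setminus\calN$. By the classical Fatou--Privalov theorem for Stieltjes transforms, this forces the restriction of $\mu_g$ to $\Delta\setminus\calN$ to be absolutely continuous with respect to Lebesgue measure.

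Finally I would pass from vectors of the form $Q^*g$ to all of $\calH$. Pick any Borel $S\subset\Delta\setminus\calN$ of Lebesgue measure zero; absolute continuity of $\mu_g$ on $\Delta\setminus\calN$ yields $\|E(S)Q^*g\|^2=\mu_g(S)=0$ for every $g\in\calH_0$, and boundedness of the projection $E(S)$ combined with density of $\Ran Q^*$ in $\calH$ then forces $E(S)=0$. Hence the spectral measure of $A$ restricted to $\Delta\setminus\calN$ is purely absolutely continuous with respect to Lebesgue measure. Applied to $S=\{\lambda_0\}$ with $\lambda_0\in\Delta\setminus\calN$, this rules out eigenvalues of $A$ outside $\calN$; applied to open neighborhoods contained in $\Delta\setminus\calN$, it rules out points of the singular continuous spectrum outside $\calN$. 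I expect no serious obstacle here; the only slightly nontrivial step is the Fatou--Privalov passage from continuous boundary behavior of the Poisson integral to absolute continuity of the measure, which is entirely classical.
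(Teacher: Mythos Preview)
Your proposal is correct and follows essentially the same approach as the paper. The paper's argument is just the one sentence preceding the corollary: from Assumption~\ref{ass2}(i,iii) one has $\Ker Q=\{0\}$, hence $\Ran Q^*$ is dense in $\calH$, and then Theorem~\ref{limabs} yields the conclusion by the standard spectral-measure argument that you have spelled out explicitly (the paper takes this last step for granted as routine).
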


Next, we check that the ``exceptional" set $\cal N$  is exhausted by the eigenvalues 
of the operator $A$. To that end, we need to study in more detail the solutions of 
the homogeneous equation \eqref{eq:HM}. 
We start with an elementary but not quite obvious identity
which is a direct consequence of the self-adjointness of $A$.

\begin{lemma}\label{lma.ccs4}
Let Assumption~$\ref{ass2} {\rm (iii, vi, vii)}$ hold. 
For all $g\in\calH_0$ and $z=\lambda+i\eps$, $\eps\not=0$, 
we have 
\begin{equation}
\Im ((I+G_0(z))g,MQ_0R_0(z)Q_0^*g)
=
-\eps\norm{JR_0(z)Q_0^*g}^2.
\label{ccs11}
\end{equation}
\end{lemma}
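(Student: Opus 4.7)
The plan is to reduce the left-hand side of \eqref{ccs11} to $((A-z)Jf,Jf)$ where $f = R_0(z)Q_0^* g$, and then extract the imaginary part using the self-adjointness of $A$. The factorisation \eqref{bb2} in the form $AJ=J(A_0+T)$ will be essential, together with identity \eqref{eq:s} from the proof of Lemma on the representation of the resolvent.

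First I would set $f:=R_0(z)Q_0^* g$ and rewrite the right-hand factor of the inner product using Assumption~\ref{ass2}(vii): since $MQ_0 = Q_0J^*J$, we have $MQ_0R_0(z)Q_0^* g = Q_0J^*Jf$. Then, moving $Q_0$ to the left-hand factor gives
$$
((I+G_0(z))g,\,MQ_0R_0(z)Q_0^* g) = (JQ_0^*(I+G_0(z))g,\,Jf).
$$
The main technical step is to identify $Q_0^*(I+G_0(z))g$ with $(A_0+T-z)f$. Using the explicit form \eqref{ccs7} of $G_0(z)$, Assumption~\ref{ass2}(vi) in the form $Q_0^*M_0^* = TQ_0^*$, the factorisation \eqref{ccs31}, and the elementary resolvent identity \eqref{eq:RId}, one computes
$$
Q_0^* G_0(z)g = -z^{-1}TQ_0^* g + z^{-1}(TA_0)R_0(z)Q_0^* g = TR_0(z)Q_0^* g = Tf,
$$
(this is essentially a restatement of \eqref{eq:s} applied to $g$). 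Therefore
$$
Q_0^*(I+G_0(z))g = (A_0-z)f + Tf = (A_0+T-z)f.
$$

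Applying $J$ and invoking the defining factorisation \eqref{bb2} rewritten as $AJ=J(A_0+T)$, I obtain
$$
JQ_0^*(I+G_0(z))g = J(A_0+T-z)f = (A-z)Jf.
$$
Substituting this back gives
$$
((I+G_0(z))g,\,MQ_0R_0(z)Q_0^* g) = ((A-z)Jf,\,Jf).
$$
Taking imaginary parts and using $A=A^*$ yields $\Im((A-z)Jf,Jf) = -\eps\|Jf\|^2$, which, since $Jf = JR_0(z)Q_0^* g$, is precisely \eqref{ccs11}. The main obstacle is the algebraic simplification of $Q_0^*G_0(z)g$, but this is already essentially performed in the derivation of \eqref{eq:s}, so no new analytical input is required; the rest is just bookkeeping of the self-adjointness of $A$.
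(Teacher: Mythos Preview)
Your proof is correct and follows essentially the same route as the paper: both arguments reduce the left-hand side of \eqref{ccs11} to $\Im((A-z)J\varphi,J\varphi)$ with $\varphi=R_0(z)Q_0^*g$, using the factorisation $AJ=J(A_0+T)$ together with the identities \eqref{eq:s} and $MQ_0=Q_0J^*J$. The only difference is the order of presentation---the paper first derives the identity at the level of $(I+TR_0(z))Q_0^*g$ and then translates it into the $G_0$-language, whereas you proceed in the opposite direction.
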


\begin{proof}
By the relation \eqref{bb2}, we have 
$$
J(A_0-z)\varphi+JT\varphi=AJ\varphi-zJ\varphi
$$
for all $\varphi \in\calH_0$. 
Setting here $\varphi=R_0(z)f$, we see that 
$$
J(I+TR_0(z))f=A JR_0(z)f-zJR_0(z)f
$$
and hence
$$
(J(I+TR_0(z))f,JR_0(z)f)
=
(AJR_0(z)f,JR_0(z)f)
-z(JR_0(z)f,JR_0(z)f).
$$
Taking the imaginary part and using the self-adjointness of $A$, we get
$$
\Im (J(I+TR_0(z))f,JR_0(z)f)
=
-\eps\norm{JR_0(z)f}^2.
$$
Setting $f=Q_0^*g$, we obtain 
\begin{equation}
\Im ((I+TR_0(z))Q_0^*g,J^*JR_0(z)Q_0^*g)
=
-\eps\norm{JR_0(z) Q_0^*g }^2.
\label{ccs11x}
\end{equation}
According to identities \eqref{eq:s} and \eqref{eq:s1} the l.h.s.\  of  \eqref{ccs11} and of  \eqref{ccs11x}
 coincide.
\end{proof}

In the case $J=I$, identity \eqref{ccs11} is well known and plays the crucial role 
in the study of  the exceptional set $\calN$. 
This is still true in a more general case considered here.

Our next goal is to pass  to the limit $\varepsilon\to 0$ in \eqref{ccs11}. 
The following assertion will allow us to get rid of the operator $J$ in the r.h.s.\ 

\begin{lemma}\label{lma.ccx}
Let Assumption~$\ref{ass2} {\rm (iv, vii, viii)}$  hold. Then, for all $g\in\calH_0$, the function
$$
((J^*J-I)R_0(z)Q_0^*g,R_0(z)Q_0^*g)
$$
is continuous for $\Re z\in\Delta$, $\pm \Im z\geq 0$.
\end{lemma}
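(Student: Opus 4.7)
The plan is to express $f(z) := ((J^*J - I) R_0(z) Q_0^* g, R_0(z) Q_0^* g)$ entirely in terms of the boundary-regular object $\xi(z) := Q_0 R_0(z) Q_0^* g$, which is H\"older continuous up to the real axis by Assumption~\ref{ass2}(iv) and Lemma~\ref{Priv}. The key idea is to exploit the resolvent identity $A_0 R_0(z) = I + z R_0(z)$ so as to produce the symmetric sandwich $A_0(J^*J - I) A_0$ controlled by Assumption~\ref{ass2}(viii).

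Setting $B = J^*J - I$ and $\psi(z) = R_0(z) Q_0^* g$, the identity $(A_0 - z)\psi(z) = Q_0^* g$ gives $z\psi(z) = A_0 \psi(z) - Q_0^* g$. Substituting this in both arguments of the inner product yields
\begin{equation*}
f(z) = |z|^{-2}\bigl[(BA_0\psi(z), A_0\psi(z)) - 2\Re(BA_0\psi(z), Q_0^* g) + (BQ_0^* g, Q_0^* g)\bigr].
\end{equation*}
The quadratic term is handled by Assumption~\ref{ass2}(viii):
\begin{equation*}
(BA_0\psi(z), A_0\psi(z)) = (A_0 B A_0\psi(z), \psi(z)) = (Q_0^*\wt K Q_0\psi(z), \psi(z)) = (\wt K\xi(z), \xi(z)),
\end{equation*}
which is continuous since $\wt K$ is bounded and $\xi$ is H\"older continuous.

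For the cross term I would invoke Assumption~\ref{ass2}(vii): boundedness of $M = Q_0 J^*J Q_0^{-1}$ means $Q_0 J^*J = M Q_0$, and taking adjoints gives $J^*J Q_0^* = Q_0^* M^*$, i.e., $B Q_0^* = Q_0^*(M^* - I)$. Hence
\begin{equation*}
(BA_0\psi(z), Q_0^* g) = (A_0\psi(z), Q_0^*(M^* - I)g) = (Q_0 A_0\psi(z), (M^* - I)g),
\end{equation*}
and $Q_0 A_0\psi(z) = Q_0 Q_0^* g + z\,\xi(z)$ is continuous. The constant term $(BQ_0^* g, Q_0^* g)$ and the factor $|z|^{-2}$, continuous because $0 \notin \Delta$, complete the argument.

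The main obstacle is conceptual rather than computational: neither $\psi(z)$ nor a pairing $(B\psi(z), h)$ against a generic fixed vector $h$ is individually boundary-regular, so continuity of $f(z)$ must emerge from internal cancellation. What makes the scheme work is the interplay between (viii), which supplies the $A_0 \cdot B \cdot A_0$ sandwich needed to absorb the ``doubly singular'' piece via one application of the resolvent identity on each side, and (vii), which is precisely what permits passing $Q_0^*$ through $B$ in the cross terms and thereby reducing them to pairings against $\xi(z)$.
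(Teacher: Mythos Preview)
Your proof is correct and follows essentially the same approach as the paper's. Both arguments use the resolvent identity $A_0 R_0(z) = I + z R_0(z)$ to replace each factor $R_0(z)$ by $A_0 R_0(z)$ up to bounded remainders, thereby producing the sandwich $A_0(J^*J-I)A_0$ handled by (viii), and then invoke (vii) in the form $J^*J Q_0^* = Q_0^* M^*$ to reduce the cross terms to expressions in $Q_0 R_0(z) Q_0^*$; the only difference is that the paper first writes an operator identity for $|z|^2 R_0(\bar z)(J^*J-I)R_0(z)$ and then evaluates, whereas you work directly with the quadratic form, which is a cosmetic distinction.
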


\begin{proof}
Using identity \eqref{eq:RId}, we get
\begin{multline*}
|z|^{2} R_0(\overline{z})(J^*J-I)R_0(z)
\\
=- (J^*J-I)
+ R_0(\overline{z})A_0(J^*J-I) A_0R_0(z)
- 2\Re \big(z (J^*J-I)  R_0(z)\big).
\end{multline*}
Consider separately the three terms in the r.h.s.\   The first one does not depend on $z$.
Next, by  Assumption~\ref{ass2}(viii), we have
$$
(A_0(J^*J-I)A_0R_0(z)Q_0^*g,R_0(z)Q_0^*g)
= 
(\wt K Q_0R_0(z)Q_0^*g,Q_0R_0(z)Q_0^*g). 
$$
This function is continuous because, by Lemma~\ref{Priv}, the operator valued function
$Q_0R_0(z)Q_0^*$ is continuous. 
Finally,  we have
\begin{multline*}
( (J^*J-I) R_0(z) Q_0^*g,Q_0^*g)
=
(Q_0 J^*J R_0(z)  Q_0^*g,   g) -(Q_0R_0(z)Q_0^* g, g)
\\
=(M Q_0R_0(z)Q_0^*  g, g) - (Q_0R_0(z)Q_0^* g, g),
\end{multline*}
where we have used   (vii) at the last step.
The r.h.s.\  here is again continuous because the operator valued function
  $Q_0R_0(z)Q_0^*$ is continuous. 
\end{proof}

\begin{lemma}\label{lma.ccs5}
Let Assumption~$\ref{ass2} {\rm (iii, iv, vi-viii)}$ hold. 
If $g$ satisfies equation \eqref{eq:HM} for $\lambda\in\Delta$, then 
\begin{equation}
\frac{d(E_{0}(-\infty,\lambda)Q_0^*g,Q_0^*g)}{d\lambda}=0.
\label{ccs12}
\end{equation}
\end{lemma}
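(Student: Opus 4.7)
My plan is to pass to the limit $\varepsilon\to 0$ in the identity \eqref{ccs11} at $z=\lambda+i\varepsilon$ (either sign), use the homogeneous equation $g+G_0(\lambda\pm i0)g=0$ to collapse the left-hand side, and then read off the vanishing of the spectral density from the right-hand side via a Poisson-integral representation.

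For the left-hand side of \eqref{ccs11}, the operator-valued functions $G_0(z)$ and $Q_0R_0(z)Q_0^*$ are norm-H\"older continuous up to $\Re z\in\Delta$, $\pm\Im z\geq 0$ by Lemma~\ref{Priv}; since $M$ is bounded by Assumption~\ref{ass2}(vii), so is $MQ_0R_0(z)Q_0^*$. The hypothesis $g+G_0(\lambda\pm i0)g=0$ therefore forces $\norm{(I+G_0(z))g}\to 0$ while $MQ_0R_0(z)Q_0^*g$ stays bounded, so the left-hand side tends to zero.

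For the right-hand side I would split
\[
\norm{JR_0(z)Q_0^*g}^2 = \norm{R_0(z)Q_0^*g}^2 + ((J^*J-I)R_0(z)Q_0^*g,\, R_0(z)Q_0^*g).
\]
Lemma~\ref{lma.ccx} tells me the second term is continuous (hence bounded) up to the real axis, so multiplying by $\varepsilon$ kills its contribution. Combining with the previous paragraph yields
\[
\lim_{\varepsilon\to 0}\,\varepsilon\norm{R_0(\lambda+i\varepsilon)Q_0^*g}^2 = 0.
\]

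To finish, I use the identity $\varepsilon\norm{R_0(\lambda+i\varepsilon)f}^2 = \Im(R_0(\lambda+i\varepsilon)f,f)$, which for $f=Q_0^*g$ is the Poisson integral of the scalar spectral measure $d\nu(\mu):=d(E_0(\mu)Q_0^*g,Q_0^*g)$. Strong $A_0$-smoothness of $Q_0$ together with formula \eqref{eq:ZG1} shows that on $\Delta$ this measure is absolutely continuous with H\"older continuous density $\rho(\mu)=\babs{Z_0(\mu;Q_0)g}^2$; the tail over $\R\setminus\Delta$ contributes $o(1)$ to the Poisson integral because $\lambda$ is an interior point of $\Delta$. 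Standard Poisson kernel convergence then yields $\pi\rho(\lambda)=0$, which is exactly \eqref{ccs12}. The only mildly delicate point is securing the Poisson limit at the specific $\lambda$ rather than only Lebesgue-a.e., but this is immediate from the H\"older continuity of $\rho$ on $\Delta$ provided by the strong smoothness assumption.
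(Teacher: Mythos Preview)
Your proof is correct and follows essentially the same route as the paper: pass to the limit in identity \eqref{ccs11} using the H\"older continuity of $G_0(z)$ (Lemma~\ref{Priv}) to kill the left-hand side, then invoke Lemma~\ref{lma.ccx} to strip off the $J$ from the resulting $\varepsilon\norm{JR_0(\lambda+i\varepsilon)Q_0^*g}^2\to0$, and finally read off the spectral density via the Poisson-type identity $\varepsilon\norm{R_0(\lambda+i\varepsilon)f}^2=\Im(R_0(\lambda+i\varepsilon)f,f)$. The paper states this last step as the general operator-theoretic identity \eqref{ccs14a}; your more explicit justification via H\"older continuity of the density is a perfectly good way to secure it at the specific point $\lambda$.
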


\begin{proof}
Recall that according to relation \eqref{eq:ZG1} under assumption (iv) the l.h.s.\  of \eqref{ccs12} is a continuous function of $\lambda\in\Delta$. By Lemma~\ref{Priv},  the operator 
valued function  $G_0(z)$ defined by  \eqref{ccs7}
is continuous for $\Re z\in\Delta$, $\pm\Im z\geq 0$. Therefore
if $g$ satisfies equation  \eqref{eq:HM}, then 
$\norm{(I+G_0(\lambda\pm i\eps))g}\to0$ as $\eps\to+0$. 
By Lemma~\ref{lma.ccs4},  it follows that 
$$
\lim_{\eps\to+0}\eps\norm{JR_0(\lambda\pm i\eps)Q_0^*g}^2 = 0,
$$
whence, by  Lemma~\ref{lma.ccx},
\begin{equation}
\lim_{\eps\to+0}\eps\norm{ R_0(\lambda\pm i\eps)Q_0^*g}^2 = 0 .
\label{ccs14}\end{equation}
Now it remains to use the general operator theoretic identity
\begin{equation}
\frac{d(E_{0}(-\infty,\lambda)Q_0^*g,Q_0^*g)}{d\lambda}
=
\frac1\pi \lim_{\eps\to+0}\eps\norm{ R_0(\lambda\pm i\eps)Q_0^*g}^2,
\label{ccs14a}\end{equation}
which is a consequence of the relation between boundary values of a Cauchy integral and its density. Putting \eqref{ccs14} and \eqref{ccs14a} together, we get
 \eqref{ccs12}. 
\end{proof}

Given identity \eqref{ccs12}, the following two lemmas as well as the results of Subsection~4.5 are quite standard.

\begin{lemma}\label{lma.ccs6}
Let Assumption~$\ref{ass2}  {\rm (ii-viii)}$ hold. 
Then for both signs $``\pm"$ the inclusion
\begin{equation}
\calN_{\pm}\subset\sigma_p(A)\cap\Delta
\label{ccs14b}\end{equation}
is true.
\end{lemma}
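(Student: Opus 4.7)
The plan is to take a nontrivial solution $g$ of $g+G_0(\lambda\pm i0)g=0$ and construct an eigenvector of $A$ at $\lambda$. Specifically, I will show that the formal expression $\varphi := R_0(\lambda\pm i0)Q_0^*g$ actually defines an element of $\calH_0$ lying in $\Ker(A_0+T-\lambda)$, and then produce the eigenvector $J\varphi$.

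For the first step, observe that by Lemma~\ref{lma.ccs5} and identity~\eqref{eq:ZG1} applied to $Q_0$, the nonnegative Hölder continuous function $\lambda'\mapsto\babs{Z_0(\lambda';Q_0)g}^2$ has vanishing derivative at $\lambda$ of the associated spectral measure, so $Z_0(\lambda;Q_0)g=0$. Combined with strong $A_0$-smoothness with exponent $\gamma>1/2$, in the spectral representation of $A_0$ on $\Delta$ the function $F_0(E_0(\Delta)Q_0^*g)(\mu)=Z_0(\mu;Q_0)g$ is Hölder of order $\gamma>1/2$ and vanishes at $\mu=\lambda$, so $|\mu-\lambda|^{-2}\babs{Z_0(\mu;Q_0)g}^2$ is locally integrable near $\lambda$. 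The complementary piece $(I-E_0(\Delta))Q_0^*g$ has spectral support at positive distance from $\lambda$ (since $\Delta$ is open and $\lambda\in\Delta$). By dominated convergence, $R_0(\lambda\pm i\eps)Q_0^*g$ converges in norm as $\eps\to+0$ to a vector $\varphi\in\calH_0$.

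The second step is to pass to the limit in the standard identities. From $(A_0-\lambda\mp i\eps)R_0(\lambda\pm i\eps)Q_0^*g=Q_0^*g$ and the norm convergence just established, $(A_0-\lambda)\varphi=Q_0^*g$. From \eqref{eq:s} and Lemma~\ref{Priv} we have $TR_0(\lambda\pm i\eps)Q_0^*g=Q_0^*G_0(\lambda\pm i\eps)g\to Q_0^*G_0(\lambda\pm i0)g=-Q_0^*g$ (using the hypothesis $g+G_0(\lambda\pm i0)g=0$); on the other hand $T$ is bounded and $R_0(\lambda\pm i\eps)Q_0^*g\to\varphi$, so $T\varphi=-Q_0^*g$. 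Adding the two identities gives $(A_0+T-\lambda)\varphi=0$.

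The third step uses the intertwining \eqref{bb2}: for any $\psi\in\calH_0$ one has $AJ\psi=J(A_0+T)\psi$, so $(A-\lambda)J\varphi=J(A_0+T-\lambda)\varphi=0$. To promote $J\varphi$ to an actual eigenvector I must rule out $J\varphi=0$. Since $\lambda\in\Delta$ and $0\notin\Delta$, Assumption~\ref{ass2}(ii) applies and forces $\varphi=0$ whenever $\varphi\in\Ker J\cap\Ker(A_0+T-\lambda)$. But $\varphi=0$ would give $Q_0^*g=(A_0-\lambda)\varphi=0$, hence $g=0$ by Assumption~\ref{ass2}(iii), contradicting our choice of $g$. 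Thus $J\varphi\neq 0$ and $\lambda\in\sigma_p(A)\cap\Delta$.

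The main obstacle is the very first step: realising $\varphi$ as a genuine vector in $\calH_0$ rather than as a formal symbol. This is precisely where the hypothesis $\gamma>1/2$ enters in an essential way, because only for such $\gamma$ does the singularity $|\mu-\lambda|^{-2}$ in the resolvent get absorbed by the $|\mu-\lambda|^{2\gamma}$ behaviour of $|Z_0(\mu;Q_0)g|^2$ inherited from Hölder continuity plus the vanishing at $\lambda$; once $\varphi\in\calH_0$ is established, the remainder of the argument is a robust algebraic manipulation using \eqref{bb2} and Assumption~\ref{ass2}(ii,iii).
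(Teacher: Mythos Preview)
Your proof is correct and follows essentially the same route as the paper: you take a nontrivial solution $g$ of \eqref{eq:HM}, use Lemma~\ref{lma.ccs5} together with the strong smoothness ($\gamma>1/2$) to show that $\varphi=R_0(\lambda\pm i0)Q_0^*g$ is a genuine element of $\calH_0$, derive $(A_0+T-\lambda)\varphi=0$, and then invoke \eqref{bb2} and Assumption~\ref{ass2}(ii,iii) to conclude that $J\varphi$ is a nonzero eigenvector of $A$. Your presentation is slightly more explicit than the paper's in that you actually argue norm convergence of $R_0(\lambda\pm i\eps)Q_0^*g$ (splitting into $E_0(\Delta)$ and its complement) and pass to the limit in the identities, whereas the paper simply writes $\varphi=R_0(\lambda\pm i0)f$ and checks $E_0(\Delta)\varphi\in\calH_0$ directly; both amount to the same argument.
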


\begin{proof}
Let a vector $g\not=0$ satisfy equation \eqref{eq:HM}.
Set $f=Q_0^*g$ and $\varphi=R_0(\lambda\pm i0)f$; 
let us check that $\varphi\in\calH_0$. 
Let  $F_0$ be the unitary map (see \eqref{eq:DIntF}) which diagonalises $A_0$  and   $\wh f= F_0 f$.  By assumption~(iv) (the strong smoothness of $Q_0$),
the function $\wh f (\lambda)=Z_{0}(\lambda; Q_{0}) g $  defined by \eqref{eq:ZG}
is H\"older continuous on $\Delta$  with the exponent $\gamma>1/2$. In view of equality \eqref{eq:ZG1} and Lemma~\ref{lma.ccs5},
we have  $\wh f(\lambda)=0$. Therefore 
\begin{multline*}
\norm{E_{0}(\Delta)\varphi}^2
=
\int_\Delta\abs{\mu-\lambda}^{-2}\babs{\wh f(\mu)}^2d\mu
\\
=
\int_\Delta\abs{\mu-\lambda}^{-2}\babs{\wh f(\mu)-\wh f(\lambda)}^2 d\mu
\leq 
\const \int_\Delta \abs{\mu-\lambda}^{-2+2\gamma}d\mu<\infty,
\end{multline*}
and hence $\varphi\in\calH_0$.

The following argument is quite similar to the proof of Lemma~\ref{lma.ccs3}. 
Multiplying \eqref{eq:HM} by $Q_0^*$ and using identity \eqref{eq:s}, we obtain
$$
f + T R_0(\lambda\pm i0)f=0.
$$
It follows that $\varphi=R_0(\lambda\pm i0)f$ satisfies 
\begin{equation}
(A_0-\lambda)\varphi+T\varphi=0.
\label{ccs19x}
\end{equation}
Since $AJ=J(A_0+T)$, this yields
$AJ\varphi=\lambda J\varphi$.
So it remains to check that $J\varphi\not=0$. 
Supposing the contrary and using equation \eqref{ccs19x},  
we see that $\varphi=0$, by assumption~(ii). Now it follows that $f=Q_0^*g=0$ 
and hence $g=0$. 
This contradicts the assumption $g\not=0$. 
Thus $\psi=J\varphi\not=0$ and $A\psi=\lambda\psi$. 
\end{proof}

\begin{lemma}\label{cor.ccs7}

Let Assumption~$\ref{ass2} {\rm (i-viii)}$ hold true. 
Then on the interval $\Delta$, the operator $A$ does not have 
any singular continuous spectrum. For the point spectrum, we have
\begin{equation}
\calN_{+}=\calN_{-}=\sigma_p(A)\cap\Delta.
\label{eq:NN}\end{equation}
\end{lemma}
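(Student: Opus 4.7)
The plan is to combine the two one-sided inclusions already available. Lemma~\ref{lma.ccs6} gives $\calN_{\pm}\subset \sigma_p(A)\cap\Delta$, while Corollary~\ref{limabsc} supplies the reverse inclusion $\sigma_p(A)\cap\Delta\subset\calN = \calN_{+}\cap\calN_{-}$. Chaining these yields
$\calN\subset\calN_{\pm}\subset\sigma_p(A)\cap\Delta\subset\calN$,
so all four sets coincide, which proves identity~\eqref{eq:NN}. No further analytic work is needed for this part; it is purely a matter of book-keeping of previous results.

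For the absence of singular continuous spectrum on $\Delta$, I would argue as follows. Corollary~\ref{limabsc} also places $\sigma_{sc}(A)\cap\Delta$ inside $\calN$, and by the previous paragraph $\calN=\sigma_p(A)\cap\Delta$. Since the ambient Hilbert space is separable, the pairwise orthogonal eigenspaces of $A$ form an at most countable family, so $\sigma_p(A)$, and hence $\calN$, is at most countable. The singular continuous part of the spectral measure of any self-adjoint operator is non-atomic by definition, so it must vanish on every countable set, and in particular on $\calN$. Combined with $\sigma_{sc}(A)\cap\Delta\subset\calN$, this forces the singular continuous part of the spectral measure of $A$ to vanish on $\Delta$, i.e.\ $\sigma_{sc}(A)\cap\Delta=\emptyset$.

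The delicate point I would be careful about is measure-theoretic. The set-theoretic containment $\sigma_{sc}(A)\cap\Delta\subset\calN$ together with $\calN$ being closed of Lebesgue measure zero (from Proposition~\ref{Fred}) is \emph{not} by itself enough to rule out a singular continuous measure: \emph{a priori}, $\calN$ could be a closed Lebesgue-null set of Cantor type on which a non-atomic singular measure is free to concentrate. The decisive extra input is that every point of $\calN$ is genuinely an eigenvalue of $A$, as furnished by Lemma~\ref{lma.ccs6}; together with separability of the Hilbert space this collapses $\calN$ to a countable set, at which point the non-atomic property of the singular continuous measure closes the argument.
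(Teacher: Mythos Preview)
Your argument is correct and follows essentially the same route as the paper: combine the inclusion $\calN_{\pm}\subset\sigma_p(A)\cap\Delta$ from Lemma~\ref{lma.ccs6} with the reverse inclusion $\sigma_p(A)\cap\Delta\subset\calN$ from Corollary~\ref{limabsc} to obtain \eqref{eq:NN}, and then use countability of the resulting set to exclude singular continuous spectrum. Your closing paragraph explaining why countability (rather than mere Lebesgue-nullity) is the decisive input is a helpful clarification, but the underlying logic is identical to the paper's proof.
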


\begin{proof}
By Corollary~\ref{limabsc}, the singular   spectrum
of $A$ on $\Delta$ is contained in $\calN$ and, in particular,
\begin{equation}
 \sigma_p(A)\cap\Delta\subset \calN.
\label{eq:NNg}\end{equation}
Since, by Lemma~\ref{lma.ccs6}, the set $\calN$ is countable,   the singular continuous spectrum of $A$ is empty. Comparing \eqref{ccs14b} with \eqref{eq:NNg},  we obtain equality \eqref{eq:NN}.  
\end{proof}

\subsection{Non-accumulation of eigenvalues}

Here we prove two results.

\begin{lemma}\label{lma.ccs9}
Let Assumption~$\ref{ass2} {\rm (i-viii)}$
 hold true. Then the eigenvalues of $A$ in $\Delta$ 
have finite multiplicities.
\end{lemma}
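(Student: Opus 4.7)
The plan is to use the factorisation \eqref{ccs10} to transfer the finite rank of the Riesz projection of $G_0(\lambda+i0)$ at the eigenvalue $-1$ onto $\Ker(A-\lambda)$ via a residue computation at $z=\lambda$.

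First I would apply the spectral theorem for the self-adjoint operator $A$: for an eigenvalue $\lambda$, the strong limit
\[
\slim_{\eps\to 0^+} i\eps R(\lambda+i\eps) = -P_\lambda
\]
holds, where $P_\lambda$ is the orthogonal projection onto $\Ker(A-\lambda)$. Sandwiching by $Q$ and $Q^*$ and invoking \eqref{ccs10}, one obtains
\[
-QP_\lambda Q^* = \slim_{\eps\to 0^+}\bigl(MQ_0R_0(\lambda+i\eps)Q_0^*\bigr)\bigl(i\eps\,(I+G_0(\lambda+i\eps))^{-1}\bigr).
\]
By Lemma~\ref{Priv} the first factor converges in operator norm to the bounded operator $MQ_0R_0(\lambda+i0)Q_0^*$, so all the singular behaviour is concentrated in the second factor.

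Next I would exploit the Fredholm structure of $I+G_0(\lambda+i0)$. By Assumption~\ref{ass2}(v, vi) and Lemma~\ref{Priv}, $G_0(\lambda+i0)^2$ is compact, so by the spectral mapping theorem applied to the polynomial $w\mapsto w^2$, the point $-1$ is an isolated eigenvalue of $G_0(\lambda+i0)$ with finite-dimensional algebraic eigenspace. Let $\Pi$ denote the associated Riesz projection; it is of finite rank. Writing
\[
(I+G_0(\lambda+i\eps))^{-1} = \Pi\,(I+G_0(\lambda+i\eps))^{-1} + (I-\Pi)\,(I+G_0(\lambda+i\eps))^{-1},
\]
for every $\eps>0$ the first summand takes values in $\Ran\Pi$ and so has rank at most $\dim\Ran\Pi$. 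For the second summand, a Schur complement analysis relative to the splitting $\calH_0 = \Ran\Pi\oplus\Ker\Pi$, combined with the H\"older continuity of $G_0$ from Lemma~\ref{Priv} and the assumption $\gamma>1/2$, shows that $i\eps\,(I-\Pi)(I+G_0(\lambda+i\eps))^{-1}$ tends to $0$ in operator norm as $\eps\to 0^+$.

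Combining these observations, $-QP_\lambda Q^*$ is the strong limit of operators of rank at most $\dim\Ran\Pi$, hence itself has rank at most $\dim\Ran\Pi<\infty$. By Assumption~\ref{ass2}(i, iii) we have $\Ker J^*=\{0\}$ and $\Ker Q_0=\{0\}$, so $Q=Q_0J^*$ is injective; writing $QP_\lambda Q^* = (QP_\lambda)(QP_\lambda)^*$ then gives $\dim\Ran(QP_\lambda Q^*)=\dim\Ran(QP_\lambda) = \dim\Ker(A-\lambda)$, whence $\dim\Ker(A-\lambda)\le \dim\Ran\Pi<\infty$. The main obstacle is the Schur complement step: one must show that the prefactor $i\eps$ suppresses the potential growth of $(I-\Pi)(I+G_0(\lambda+i\eps))^{-1}$ coming from the off-diagonal blocks of $G_0(z)-G_0(\lambda+i0)$, which are H\"older-controlled by $|z-\lambda|^\gamma$ with $\gamma>1/2$.
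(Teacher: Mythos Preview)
Your strategy has a genuine gap in the Schur complement step, and the condition $\gamma>1/2$ does not close it. With $\Pi$ the Riesz projection of $G_0(\lambda+i0)$ at $-1$, write $L_\eps=I+G_0(\lambda+i\eps)$ in block form on $\Ran\Pi\oplus\Ker\Pi$. The $(1,1)$ block at $\eps=0$ is $N:=(I+G_0(\lambda+i0))|_{\Ran\Pi}$, which is \emph{nilpotent}, not zero in general: nothing in Assumption~\ref{ass2} forces the algebraic and geometric multiplicities of $-1$ to coincide. The Schur complement $S_\eps$ of the invertible $(2,2)$ block satisfies $\|S_\eps-N\|\leq C\eps^\gamma$, but a H\"older--small perturbation of a nilpotent with nilpotency index $k$ may have inverse of norm $\sim(\eps^\gamma)^{-k}$ (take $N$ a single $k\times k$ Jordan block and perturb by $\eps^\gamma I$). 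Then
\[
\|i\eps\,(I-\Pi)L_\eps^{-1}\|\ \lesssim\ \eps+\eps^{1+\gamma}\|S_\eps^{-1}\|\ \sim\ \eps^{\,1-(k-1)\gamma},
\]
which diverges as soon as $k\geq3$ (and $\gamma>1/2$). You have no a~priori bound on $k$, nor any independent estimate on $\|L_\eps^{-1}\|$: the identity \eqref{ccs10} does not give one, since $MQ_0R_0(\lambda+i0)Q_0^*$ need not be left invertible. So the claimed convergence $i\eps(I-\Pi)L_\eps^{-1}\to0$ is unjustified.

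The paper bypasses this entirely with a two--line injection. Taking adjoints in \eqref{bb2} gives $J^*A-A_0J^*=T^*J^*$; hence for $A\psi=\lambda\psi$ the vector $\varphi=J^*\psi$ satisfies $\varphi+R_0(\lambda\pm i0)T^*\varphi=0$, and applying $Q_0$ together with \eqref{eq:s} yields $g+G_0^*(\lambda\pm i0)g=0$ for $g=Q_0J^*\psi=Q\psi$. The map $\psi\mapsto g$ is injective by Assumption~\ref{ass2}(i,iii), so
\[
\dim\Ker(A-\lambda)\leq\dim\Ker\bigl(I+G_0^*(\lambda\pm i0)\bigr)<\infty
\]
since $G_0^*(\lambda\pm i0)^2\in{\goth S}_\infty$. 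No residue computation, no control of $L_\eps^{-1}$, and no hypothesis on the Jordan structure of $G_0(\lambda+i0)$ are needed.
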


\begin{proof}
Taking  conjugates in \eqref{bb2}, we see that
$$
J^* A - A_{0} J^*= T^* J^*.
$$
Therefore if $A\psi=\lambda\psi$, then   the element
$\varphi=J^*\psi$ satisfies the equation 
$$
(A_0-\lambda)\varphi+T^*\varphi=0
$$
and hence the equation
$$
\varphi+R_0(\lambda\pm i0)T^*  \varphi=0.
$$
Let us apply $Q_{0}$ to the last equation and use the fact that according to \eqref{eq:s}
$ Q_{0}R_0(\lambda\pm i0)T^*  = G_0(\lambda\pm i0) Q_{0}$. Thus, for $g=Q_0 \varphi$,  we get the  equation 
$$
g+G_0^*(\lambda\pm i0)g=0.
$$

Next, we claim that $g\not=0$ if  $\psi\not=0$. 
Indeed, if $g=0$, then $\varphi=0$ because $\Ker Q_0=\{0\}$ and $\psi=0$ 
because $\Ker J^*=\{0\}$. 
Actually, the above argument   shows that 
$$
\dim \Ker (A-\lambda)
\leq 
\dim\Ker (I+ G_0^*(\lambda\pm i0) ).
$$
The dimension in the r.h.s.\  is finite 
because the operator $G_0^*(\lambda\pm i0)^2 $ is compact. 
\end{proof}
 
\begin{lemma}\label{lma.ccs8}

Let Assumption~$\ref{ass2} {\rm (i-viii)}$ hold true. 
Then the eigenvalues of $A$ in $\Delta$ 
can accumulate only to the endpoints of $\Delta$. 
\end{lemma}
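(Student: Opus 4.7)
The plan is to argue by contradiction. Suppose that $\lambda_n\in\sigma_p(A)\cap\Delta$ is a sequence of distinct eigenvalues with $\lambda_n\to\lambda_0\in\Delta$. By Lemma~\ref{cor.ccs7}, each $\lambda_n$ lies in $\calN_+$, so I may pick $g_n\in\calH_0$ with $\|g_n\|=1$ and $g_n+G_0(\lambda_n+i0)g_n=0$. Lemma~\ref{Priv} gives H\"older continuity of $G_0(z)$, and Assumption~\ref{ass2}(v,vi) gives $G_0^2(z)\in{\goth S}_\infty$; hence $G_0(\lambda+i0)^2$ is a norm-continuous compact-operator-valued function on $\Delta$. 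Iterating yields $g_n=G_0(\lambda_n+i0)^2g_n$; after passing to a weakly convergent subsequence $g_n\rightharpoonup g_0$, the right-hand side converges strongly (weak convergence becomes strong when composed with a norm-convergent compact family), so $g_n\to g_0$ in norm with $\|g_0\|=1$ and $g_0+G_0(\lambda_0+i0)g_0=0$.

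Next I would apply the eigenvector construction from the proof of Lemma~\ref{lma.ccs6}: set $\varphi_n:=R_0(\lambda_n+i0)Q_0^*g_n$ and $\psi_n:=J\varphi_n$; then $\psi_n\in\Ker(A-\lambda_n)\setminus\{0\}$, and similarly $\psi_0=J\varphi_0\ne 0$ is an eigenvector of $A$ at $\lambda_0$. The H\"older integral estimate in that proof shows $\sup_n\|\varphi_n\|<\infty$, hence $\sup_n\|\psi_n\|<\infty$. Since $\{\psi_n\}$ is pairwise orthogonal (distinct eigenvalues of the self-adjoint $A$) and norm-bounded, $\psi_n\rightharpoonup 0$ weakly in $\calH$. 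The contradiction will come from showing $\psi_n\to\psi_0$ strongly, which combined with $\psi_0\ne 0$ is incompatible with $\psi_n\rightharpoonup 0$.

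Since $J$ is bounded, it suffices to show $\varphi_n\to\varphi_0$ strongly in $\calH_0$. Decompose $\varphi_n=E_0(\Delta)\varphi_n+E_0(\R\setminus\Delta)\varphi_n$. The second summand converges in norm: on $\Ran E_0(\R\setminus\Delta)$ the resolvent $R_0(\lambda+i0)$ is norm-continuous at $\lambda_0$ (its spectrum stays a positive distance from $\lambda_0$) and $Q_0^*g_n\to Q_0^*g_0$ strongly. For the first summand, I pass to the spectral representation $F_0$ of Definition~\ref{strsm}: $(F_0E_0(\Delta)\varphi_n)(\mu)=(\mu-\lambda_n-i0)^{-1}\hat f_n(\mu)$ with $\hat f_n:=Z_0(\,\cdot\,;Q_0)g_n\to\hat f_0$ in $C^\gamma(\Delta;{\goth h}_0)$ by strong $A_0$-smoothness of $Q_0$. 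The crucial input is Lemma~\ref{lma.ccs5}, which via \eqref{eq:ZG1} gives $\hat f_n(\lambda_n)=0$ (and likewise $\hat f_0(\lambda_0)=0$). Writing the integrand as $(\mu-\lambda_n-i0)^{-1}(\hat f_n(\mu)-\hat f_n(\lambda_n))$ kills the $i\pi\delta$-contribution from the Sokhotski--Plemelj formula and leaves the regular function $\hat f_n(\mu)/(\mu-\lambda_n)$, with pointwise bound $C|\mu-\lambda_n|^{\gamma-1}$; for $\gamma>1/2$ this gives a uniform $L^p$-bound for some $p>2$. Combined with pointwise convergence to $\hat f_0(\mu)/(\mu-\lambda_0)$ for $\mu\ne\lambda_0$, Vitali's convergence theorem yields $L^2$-convergence, and by unitarity of $F_0$, strong convergence $E_0(\Delta)\varphi_n\to E_0(\Delta)\varphi_0$ in $\calH_0$.

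The main obstacle is this last convergence in the spectral representation. The moving singularity $(\mu-\lambda_n)^{-1}$ admits no uniform integrable majorant, so Lebesgue dominated convergence is unavailable; the vanishing $\hat f_n(\lambda_n)=0$ from Lemma~\ref{lma.ccs5} is essential to absorb the singular factor into a H\"older difference quotient and thereby enable the Vitali-type argument. This is the standard but delicate maneuver underlying non-accumulation of embedded eigenvalues in smooth scattering theory.
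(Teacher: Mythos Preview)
Your proof is correct and follows essentially the same contradiction scheme as the paper: extract a strongly convergent subsequence $g_n\to g_0$ via the compactness of $G_0^2$, build the eigenvectors $\psi_n=JR_0(\lambda_n+i0)Q_0^*g_n$, show $\psi_n\to\psi_0$ strongly, and conclude from pairwise orthogonality that $\psi_0=0$, contradicting $g_0\neq0$. The paper is terser at the key step $\varphi_n\to\varphi_0$, simply invoking ``the arguments of Lemma~\ref{lma.ccs6}'' together with $g_n\to g_0$, whereas you spell out the spectral-side estimate (the vanishing $\hat f_n(\lambda_n)=0$ absorbing the singularity, followed by a Vitali/$L^p$-argument); this elaboration is exactly what underlies the paper's brief claim.
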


\begin{proof}
Suppose, to get a contradiction, that a sequence of eigenvalues
of $A$ in $\Delta$ has an accumulation point: $\lambda_n\to\lambda_0\in \Delta$ as $n\to\infty$. 
Then by Lemma~\ref{cor.ccs7}   there exists a sequence of elements $g_n\in\calH_0$ such that
\begin{equation}
g_n+G_0(\lambda_n+i0)g_n=0, 
\quad
\norm{g_n}=1.
\label{ccs23}
\end{equation}
Since the operators $G_0(\lambda+i0)$ depend continuously on $\lambda\in\Delta$
and $G_0(\lambda+i0)^2$ are compact, we may assume that 
\begin{equation}
\norm{g_n-g_0}\to0, 
\quad
n\to\infty, 
\label{ccs24}
\end{equation}
where the element $g_0\in\calH_0$   satisfies 
$$
g_0+G_0(\lambda_0+i0)g_0=0, 
\quad
\norm{g_0}=1.
$$
Let us set 
$$
\psi_n=JR_0(\lambda_n+i0)Q_0^* g_n,
\quad
\psi_0=JR_0(\lambda_0+i0)Q_0^*g_0.
$$
By the arguments of Lemma~\ref{lma.ccs6}, it can be easily deduced from condition \eqref{ccs12} on $Q_0^* g_n$ and $Q_0^* g_0$ that
$\psi_n \in\calH_0$ and $ \psi_0\in\calH_0$. Using additionally \eqref{ccs24},  we obtain
\begin{equation}
\norm{\psi_n- \psi_0}\to 0, \quad n\to\infty. 
\label{eq:conv}\end{equation}

Exactly as in  Lemma~\ref{lma.ccs6}, equation \eqref{ccs23}   implies that $A\psi_n=\lambda_n\psi_n$, and hence $\psi_n$ are pairwise orthogonal. Therefore relation \eqref{eq:conv} can be true only if $\psi_{0}=0$. Now, again the arguments of Lemma~\ref{lma.ccs6} show that $g_{0}=0$ which contradicts the condition $\norm{g_0}=1$.
\end{proof}

Combining Theorem~\ref{limabs} 
and Lemmas~\ref{cor.ccs7} -- \ref{lma.ccs8}, 
we obtain statements (ii), (iii) and (iv) of Theorem~\ref{thm.ccs1}.

\subsection{The wave operators}
Here we prove   Theorem~\ref{thm.scat}.
Let us set $\wt J=JA_0$ and first prove intermediate results
involving the identification $\wt J$ instead of $J$.

\begin{lemma}\label{lma.ccs10}

Let Assumption~$\ref{ass2}$ hold true. 
Then the wave operators 
\begin{equation}
W_\pm(A,A_0;\wt J,\Delta),  \q
W_\pm(A_0, A;\wt J^*,\Delta)
\label{ccs28z}
\end{equation}
exist and satisfy the relations
\begin{gather}
W_\pm^*(A,A_0;\wt J,\Delta) 
=
W_\pm(A_0, A;\wt J^*,\Delta),
\label{ccs26}
\\
W_\pm ^*( A, A_0;\wt J,\Delta) W_\pm( A, A_0;\wt J,\Delta)
=
A_0^2E_{0}(\Delta),
\label{ccs27}
\\
W_\pm(A, A_0;\wt J,\Delta)W_\pm^*(A, A_0;\wt J,\Delta) 
=
A^2E^{\ac}(\Delta).
\label{ccs28}
\end{gather}
\end{lemma}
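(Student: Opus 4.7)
The plan is to apply the Kato--Lavine theorem (Proposition~\ref{KL}) for existence together with Lemma~\ref{iso} for the two quadratic identities, using the identification $\wt J = J A_0$. First, I would multiply \eqref{bb2} on the right by $A_0$ and invoke \eqref{ccs31} to obtain
\begin{equation*}
A \wt J - \wt J A_0 = (AJ - JA_0) A_0 = J T A_0 = J Q_0^* K Q_0 = (K^* Q)^* Q_0,
\end{equation*}
since $Q^* = J Q_0^*$. This is precisely the factorisation required by Kato--Lavine, with the two ``$Q$-factors'' being $Q_0$ and $K^* Q$.

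Next I would verify the relative smoothness of these two factors. The operator $Q_0$ is strongly $A_0$-smooth on $\Delta$ by Assumption~\ref{ass2}(iv), so Proposition~\ref{Privalov} gives its Kato $A_0$-smoothness on every compact subinterval of $\Delta$. On the other side, Theorem~\ref{thm.ccs1}(iv) says that $Q R(z) Q^*$ is H\"older continuous for $\Re z \in \Delta \setminus \sigma_p(A)$ and $\pm \Im z \geq 0$; this implies that $Q$, and \emph{a fortiori} $K^* Q$, is Kato $A$-smooth on every compact subinterval of $\Delta \setminus \sigma_p(A)$. Proposition~\ref{KL} then yields the existence of both wave operators in \eqref{ccs28z} on any compact $\delta \subset \Delta \setminus \sigma_p(A)$, as well as the adjoint relation \eqref{ccs26} on such $\delta$. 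Since Theorem~\ref{thm.ccs1}(iii) guarantees that $\sigma_p(A) \cap \Delta$ has no interior accumulation points, $\Ran E_0(\Delta) = \Ran E_0^{\ac}(\Delta)$ is the closure of the union of the subspaces $\Ran E_0(\delta_n)$ for an exhausting sequence of such subintervals, and the uniform bound $\|e^{iAt} \wt J e^{-iA_0 t}\| \leq \|\wt J\|$ upgrades existence and \eqref{ccs26} to all of $\Delta$ by the standard $\eps/3$ argument.

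For \eqref{ccs27} I would apply Lemma~\ref{iso} with $\varphi(x) = x^2$. The compactness condition \eqref{eq:W8h} becomes
\begin{equation*}
\wt J^* \wt J - \varphi(A_0) = A_0 (J^* J - I) A_0 = Q_0^* \wt K Q_0,
\end{equation*}
which holds by Assumption~\ref{ass2}(viii). Since the spectrum of $A_0$ on $\Delta$ is purely a.c., $E_0^{\ac}(\Delta) = E_0(\Delta)$, and Lemma~\ref{iso} delivers \eqref{ccs27}. For \eqref{ccs28} the same strategy is applied to the reversed triple $(A, A_0, \wt J^*)$: by Assumption~\ref{ass2}(ix), the analogue of \eqref{eq:W8h} reads $\wt J \wt J^* - A^2 = J A_0^2 J^* - A^2 \in \goth S_\infty$, so Lemma~\ref{iso} gives
\begin{equation*}
W_\pm^*(A_0, A; \wt J^*, \Delta)\, W_\pm(A_0, A; \wt J^*, \Delta) = A^2 E^{\ac}(\Delta),
\end{equation*}
and the adjoint relation \eqref{ccs26} converts this into \eqref{ccs28}.

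The main obstacle I anticipate is the density step needed to lift the existence of $W_\pm(A, A_0; \wt J, \delta)$ from compact subintervals of $\Delta \setminus \sigma_p(A)$ to all of $\Delta$: Kato $A$-smoothness of $K^* Q$ is only available away from the eigenvalues of $A$, so Proposition~\ref{KL} cannot be applied on $\Delta$ in one stroke. Once this is handled using Theorem~\ref{thm.ccs1}(iii) and the fact that the a.c.\ subspace of $A_0$ is insensitive to the countable set $\sigma_p(A) \cap \Delta$, everything else reduces to bookkeeping the factorisations \eqref{bb2}, \eqref{ccs31} against Assumption~\ref{ass2}(viii, ix) and a pair of routine applications of Lemma~\ref{iso}.
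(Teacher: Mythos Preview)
Your proposal is correct and follows essentially the same route as the paper: the factorisation $A\wt J-\wt JA_0=Q^*KQ_0$ (which you write equivalently as $(K^*Q)^*Q_0$), Kato smoothness of $Q_0$ from Assumption~\ref{ass2}(iv), Kato $A$-smoothness of $Q$ from the limiting absorption principle, then Proposition~\ref{KL} for existence and Lemma~\ref{iso} with $\varphi(\lambda)=\lambda^2$ together with Assumptions~\ref{ass2}(viii,\,ix) for \eqref{ccs27}--\eqref{ccs28}. You are in fact more explicit than the paper both about absorbing $K$ into one factor to match the format of Proposition~\ref{KL} and about the density step from compact subintervals of $\Delta\setminus\sigma_p(A)$ to $\Delta$, which the paper leaves implicit.
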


\begin{proof}
By condition \eqref{bb2} and 
Assumption~\ref{ass2}(v), we have
$$
A\wt J-\wt J A_0
=
JT A_0
=
Q^*KQ_0, \q Q=Q_{0} J^*.
$$
By Assumption~\ref{ass2}(iv), the operator $Q_0$ is strongly 
$A_0$-smooth on $\Delta$ and therefore it is 
$A_0$-smooth (in the sense of Kato) on any compact subinterval
of $\Delta$. 
Next, by Theorem~\ref{limabs}, the operator $Q$ is $A$-smooth 
(in the sense of Kato)
on every compact subinterval of the set $\Delta\setminus \calN$. 
Therefore Proposition~\ref{KL} implies the existence of the wave operators \eqref{ccs28z}; then relation \eqref{ccs26} automatically holds.

By Assumptions~\ref{ass2}(viii, ix), we have
$$
\wt J^*\wt J-A_0^2\in\goth S_\infty,
\quad
\wt J\wt J^*-A^2\in\goth S_\infty.
$$
Applying now Lemma~\ref{iso} with $\varphi(\lambda)=\lambda^2$ to the triple $A_{0}, A, \wt J$, we obtain  relation
\eqref{ccs27}. Similarly, applying   Lemma~\ref{iso} to the triple $A , A_{0}, \wt J^*$, we obtain the  relation
$$
W_\pm^*(A_0, A;\wt J^*,\Delta)W_\pm(A_0, A;\wt J^*,\Delta)
=
A^2E^{\ac}(\Delta).
$$
By \eqref{ccs26}, it is equivalent to  \eqref{ccs28}. 
\end{proof}

Now we are ready to provide 

\begin{proof}[Proof of Theorem~$\ref{thm.scat}$]
 Since, by Lemma~\ref{lma.ccs10},  the  wave operators  $W_\pm(A,A_0;\wt{J},\Delta)$ exist,   the limits \eqref{eq:WO} exist on elements $f=A_{0}g$ where $g\in{\cal H}_{0}$ is arbitrary. Using that $ {\cal H}_{0}^{\ac}\subset\overline{\Ran A_{0}}$, we see that  the  wave operators $W_\pm(A,A_0;J,\Delta)$ also exist and satisfy
\begin{equation}
W_\pm(A, A_0;J,\Delta)A_0
=
W_\pm(A, A_0;\wt J,\Delta).
\label{ccs29}
\end{equation}

It follows from \eqref{ccs27} and \eqref{ccs29} that for all $f\in{\cal H}_{0}$
$$
\| W_\pm( A,A_0;J,\Delta)A_{0} f\| = \| W_\pm( A,A_0;\wt{J},\Delta)  f\|
= \| E_{0} (\Delta) A_{0} f\|.
$$
Therefore 
\begin{equation}
\| W_\pm( A,A_0;J,\Delta)g \| =  \| E_{0} (\Delta) g\|
\label{ccs29c}
\end{equation}
for all $g\in \Ran A_{0}$ and hence for all $ g\in \overline{\Ran A_{0}}$. Every $g\in{\cal H}_{0}$ equals $g=g_{0}+g_{1}$ where $g_{0} \in \Ker A_{0}$ and $g_{1} \in \overline{\Ran A_{0}}$. Since $W_\pm( A,A_0;J,\Delta)g=W_\pm( A,A_0;J,\Delta)g_{1}$ and
$E_{0} (\Delta)  g= E_{0} (\Delta) g_{1}$, equality \eqref{ccs29c} extends to all $g\in{\cal H}_{0}$. This implies  \eqref{ccs1}.

Next, by the intertwining relation, \eqref{ccs29} yields
\begin{equation}
A W_\pm(A, A_0;J,\Delta)
=
W_\pm(A , A_0;\wt J,\Delta).
\label{ccs30}\end{equation}
Similarly, to the proof of \eqref{ccs29c}, comparing \eqref{ccs28} and \eqref{ccs30} , we see that
$$
\| W_\pm ^*( A,A_0;J,\Delta)g \| =  \| E^{\ac} (\Delta) g\|
$$
for all $g\in \Ran A $. Then, again as \eqref{ccs29c}, this equality extends to all $g\in{\cal H} $ which implies \eqref{ccs2}.
\end{proof}

Finally,  the first statement of Theorem~\ref{thm.ccs1} is a direct consequence of Theorem~\ref{thm.scat}.
The proofs of Theorems~\ref{ccs1} and \ref{thm.scat} are complete.  

\section{Proofs of Theorems~\ref{thm.b2} and \ref{thm.b2b}}\label{sec.f}
In this section we return to the setup of Section~3
and use Theorems~\ref{ccs1} and \ref{thm.scat} to prove our main results,
Theorems~\ref{thm.b2} and \ref{thm.b2b}, respectively.

Let $\calH_0=\cal H^{N+1}$, and let $A $, $A_{0}$, $J$ be given by 
\eqref{1.2a}, \eqref{1.2b}, \eqref{1.2e}, respectively. 
We set $Q_0=\diag\{X ,\dots,X \}$ in $\calH_0 $.
As usual,  $\Delta$ is a bounded open interval such that $0\not\in\Delta$.

\begin{lemma}
Let Assumption~$\ref{assu}$ be satisfied.  
Then Assumption~$\ref{ass2}$  holds true for the operators $A_{0}$, $A$ and $J$ defined above. 
\end{lemma}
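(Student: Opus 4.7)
The plan is to verify the nine parts (i)--(ix) of Assumption~\ref{ass2} one at a time by direct computation with the block matrix form \eqref{eq:AA} of $T$, combined with the three factorisations $A_\infty=X^*K_\infty X$, $A_jA_k=X^*K_{j,k}X$ ($j\neq k$), and the boundedness of $XA_jX^{-1}$ provided by Assumption~\ref{assu}. Conditions (i), (iii), (iv) and (vii) are essentially immediate. Indeed, $J^*f=(f,f,\ldots,f)^\top$ has trivial kernel; $\Ker Q_0=\Ker Q_0^*=\{0\}$ reduces to Assumption~\ref{assu}(i) applied on each summand; since $0\notin\Delta$, the trivial first block of $A_0$ does not contribute to $\Ran E_0(\Delta)$, so the constancy of multiplicity and the strong $A_0$-smoothness of $Q_0$ on $\Delta$ are inherited coordinate-wise from the hypotheses on the $A_j$; and $J^*J$ is the $(N+1)\times(N+1)$ block matrix with every entry equal to $I$, so $M=Q_0J^*JQ_0^{-1}$ has the same form and is bounded.

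For (ii), a short computation based on \eqref{eq:AA} shows that for every $\mathbf{f}=(f_\infty,f_1,\ldots,f_N)^\top$ the zeroth component of $(A_0+T)\mathbf{f}$ is $A_\infty J\mathbf{f}$ and its $j$-th component ($j\ge 1$) is $A_j J\mathbf{f}$. Thus any $\mathbf{f}\in\Ker J$ automatically satisfies $(A_0+T)\mathbf{f}=0$, and the equation $(A_0+T-z)\mathbf{f}=0$ forces $z\mathbf{f}=0$, whence $\mathbf{f}=0$ for $z\neq 0$. Parts (v), (viii) and (ix) will be handled by the same style of entry-wise computation. In (v) one writes $(TA_0)_{ij}=T_{ij}(A_0)_{jj}$: every non-zero entry is either $A_kA_j=X^*K_{k,j}X$ with $k\neq j$, $k,j\ge 1$, or $A_\infty A_j=X^*K_\infty (XA_jX^{-1})X$, which has the required shape $X^*(\text{compact})X$ after using Assumption~\ref{assu}(v). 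Similarly, only the entries of $A_0(J^*J-I)A_0$ with $i\neq j$, $i,j\ge 1$ survive and each equals $X^*K_{i,j}X$, giving (viii); and the difference in (ix) reduces to a finite sum of terms of the types $A_\infty^2$, $A_\infty A_j$, $A_jA_\infty$ and $A_jA_k$ ($j\neq k$), each of which is compact by Assumption~\ref{assu}(iii)--(v).

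The main obstacle, and the only point where the algebraic structure really intervenes, is (vi). First one computes the entries of $M_0=Q_0T^*Q_0^{-1}$: $(M_0)_{i,0}=XA_\infty X^{-1}=XX^*K_\infty$, which is compact; $(M_0)_{i,j}=XA_jX^{-1}$ for $j\ge 1$, $i\neq j$, which is merely bounded; and $(M_0)_{j,j}=0$ for $j\ge 1$. Thus $M_0$ itself need not be compact. However, the vanishing of the diagonal entries of $M_0$ on the indices $j\ge 1$ is precisely what eliminates the uncontrollable products $A_j^2$ from the sum $(M_0^2)_{ij}=\sum_k(M_0)_{ik}(M_0)_{kj}$. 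Every surviving product either contains the compact factor $XX^*K_\infty$ from column $0$, or has the form $XA_kX^{-1}\cdot XA_jX^{-1}=XA_kA_jX^{-1}=XX^*K_{k,j}$ with $k\neq j$, $k,j\ge 1$, which is again compact. Hence every entry of the finite matrix $M_0^2$ is compact, so $M_0^2\in\goth S_\infty$, completing the verification of Assumption~\ref{ass2}.
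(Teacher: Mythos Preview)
Your proof is correct and follows essentially the same approach as the paper's own proof: a direct entry-by-entry verification of each of the nine conditions using the block-matrix form \eqref{eq:AA} of $T$, the factorisations from Assumption~\ref{assu}(iii,iv), and the boundedness of $XA_jX^{-1}$. Your treatment of part~(vi) is in fact slightly more explicit than the paper's about why the vanishing diagonal of $T$ on the indices $j\geq 1$ is exactly what eliminates the uncontrolled $A_j^2$ terms from $M_0^2$.
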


\begin{proof}
(i)
is obvious because
\begin{equation}
J^* f =(f,f,\ldots,f)^\top.
\label{eq:Jstar}
\end{equation}

(ii)
According to equality \eqref{eq:AA}
$$
A_0 + T=
\begin{pmatrix}
{A_\infty} & {A_\infty} &\ldots& {A_\infty} 
\\
A_{1}& A_{1}&\ldots &A_{1}
\\
\vdots&  \vdots& \ddots&\vdots
\\
A_{N}& A_{N} & \ldots &   A_{N}
\end{pmatrix}
$$
so that for $\mathbf f=(f_\infty ,f_1,\dots,f_N)^\top\in\calH_0$
we have
$$
( A_0+T){\mathbf f}
= 
({A_\infty} J {\mathbf f}, A_{1}J {\mathbf f} ,\ldots, A_{N}J {\mathbf f} )^\top.
$$
Thus if $J {\mathbf f}=0$, then 
$(A_0+T){\mathbf f}=0$. Hence ${\mathbf f}=0$ if $J {\mathbf f}=(A_0+T-z){\mathbf f}=0$.

(iii) follows from Assumption~\ref{assu}(i).

  (iv) follows from Assumption~\ref{assu}(ii).

(v)
According to   \eqref{eq:AA}
the operator $T A_0$ has matrix entries ${A_\infty} A_\ell$ and $A_j A_\ell$, $j\not=\ell$. It follows from Assumption~\ref{assu}(iii, v) that ${A_\infty} A_\ell =X^* K_{\infty}(XA_{\ell}X^{-1}) X$ where the operators $  K_{\infty} (XA_{\ell}X^{-1}) $ are compact. For $A_j A_\ell$, we have representation \eqref{eq:m} where $  K_{j, \ell}  $ are compact.

(vi)
According to   \eqref{eq:AA}
the operator $M_0=Q_0T^*Q_0^{-1}$ 
in $\calH_0$ is represented by a matrix with entries
$X  {A_\infty} X^{-1}$ (which equals $X^* X K_{\infty}$ and is compact by Assumption~\ref{assu}(iii))
and $X A_j X^{-1}$ (which are bounded by Assumption~\ref{assu}(v)).

Similarly, the operator $M_0^2=Q_0(T^*)^2Q_0^{-1}$ is represented by a matrix
with entries $X A_j A_\ell X ^{-1}$, $j\not=\ell$, 
$X {A_\infty} A_jX ^{-1}=(X {A_\infty} X ^{-1})(X A_jX ^{-1})$, 
$X A_j{A_\infty} X ^{-1}=(X A_jX ^{-1})(X {A_\infty} X ^{-1})$ and $X A_\infty^2 X ^{-1}=
(X A_\infty  X^{-1}) ^2$. The operators $X A_j A_\ell X ^{-1}= XX^* K_{j,\ell}$, $j\not=\ell$,
are compact by Assumption~\ref{assu}(iv).  
As we have already seen, the other operators above are compact by Assumption~\ref{assu}(iii,v).

(vii)
It follows from formulas \eqref{1.2e} and \eqref{eq:Jstar} that
the operator $J^*J$ acting  in the space $\calH_0$ has the form
\begin{equation}
J^*J=
\begin{pmatrix}
I& I&  \ldots & I
\\
I& I&  \ldots & I
\\
\vdots&  \vdots& \ddots&\vdots
 \\
I& I&  \ldots & I
\end{pmatrix}.
\label{eq:star}
\end{equation}
Thus  $Q_0J^*JQ_0^{-1}=J^*J$ is a bounded operator. 

(viii)
According to formula \eqref{eq:star} the operator $A_0(J^*J-I)A_0$ in $\calH_0$ 
has matrix entries which are   zero on the diagonal and are of the form
$A_jA_\ell$ with $j\not=\ell$ off the diagonal. These operators admit     representation \eqref{eq:m} with  compact operators $K_{j,\ell}$. 

(ix) 
It follows from definitions \eqref{1.2a}, \eqref{1.2b} and
\eqref{1.2e} that 
$$
JA_0^2J^*- A^2
=
\sum_{j=1}^N A_j^2-({A_\infty} +\sum_{j=1}^N A_j)^2.
$$
In this expression the operators $A_j^2$ cancel each other. Therefore $JA_0^2J^*- A^2$
 consists of the terms  $A_\infty^2$, ${A_\infty} A_j$, $A_j{A_\infty} $ and $A_jA_\ell$, $j\not= \ell$, which are all compact by Assumption~\ref{assu} (iii, iv).
\end{proof}

Thus Theorems~\ref{ccs1} and \ref{thm.scat} are true for the operators $A_{0}$, $A$ and $J$ considered here. Theorem~\ref{thm.b2} is a direct consequence of Theorem~\ref{ccs1}. It remains to reformulate Theorem~\ref{thm.scat} as Theorem~\ref{thm.b2b}.
By the definition \eqref{1.2e}  of $J$,
the existence of the wave operators $W_\pm(A,A_0;J,\Delta)$ 
and the existence of $W_\pm(A,A_j;\Delta)$ for all $j=1,\ldots, N$ are equivalent and
\begin{equation}
W_\pm(A,A_0;J,\Delta) \mathbf f  =\sum_{j=1}^N W_\pm(A,A_{j};\Delta) f_{j}
\label{eq:star1}
\end{equation}
 if $\mathbf f =(f_{0},f_{1},\ldots,  f_N )^\top$.  The isometricity of $W_\pm(A,A_j;\Delta)$ and the intertwining property are the consequences of their existence.

Next, taking $\mathbf f =(0,\ldots,0, f_{j}, 0,\ldots,0)^\top$, $\mathbf g =(0,\ldots,0, g_{\ell}, 0,\ldots,0)^\top$ and using  \eqref{ccs1}, \eqref{eq:star1},     we obtain that
$$
(W_\pm(A,A_j;\Delta)f_j,W_\pm(A,A_\ell ;\Delta)g_\ell)=(E_{0}(\Delta)\mathbf f, \mathbf g).
$$
If $j\neq \ell$, the r.h.s.\  here is zero for arbitrary $f_{j}\in\calH$, $g_\ell \in\calH$   which implies relation \eqref{eq:Orth}.

 According to equality \eqref{ccs2} for every  $g \in\Ran E^{\ac}(\Delta)$ and $\mathbf f = W_\pm^*(A, A_0;J,\Delta) g$, we have $g = W_\pm( A, A_0;J,\Delta)\mathbf f $. Therefore, again by \eqref{eq:star1}, 
$
g=\sum_{j=1}^N W_\pm(A,A_{j};\Delta) f_{j}
$
where $\mathbf f =(0 ,f_{1},\ldots,  f_N )^\top$. This proves the asymptotic completeness \eqref{eq:AsCo}. \qed

\section{Stationary representations for wave operators and scattering matrix}
\label{sec.e}

Here we address a more special  question of stationary representations
for the wave operators and scattering matrix in the ``abstract"  framework of
Section~\ref{sec.ccs}.  Of course the representations obtained are automatically true
for the triple \eqref{1.2a}, \eqref{1.2b}, \eqref{1.2e}. This is briefly discussed in Subsection~\ref{sec.d5}.

\subsection{The scattering matrix: definition}
\label{sec.e1}

The local scattering operator for the triple $A_0$, $A$, $J$ and the interval $\Delta$
is defined by the formula 
\begin{equation}
\mathbf S(A,A_0;J,\Delta)
=
W_+(A,A_0;J,\Delta)^*W_-(A,A_0;J,\Delta).
\label{e1}
\end{equation}
By \eqref{ccs1}, \eqref{ccs2} and the intertwining property of the 
wave operators, the scattering operator is unitary on  $\Ran E_{0}(\Delta)$
and commutes with $A_0$. 

Therefore in the spectral representation (see \eqref{eq:DIntF} and \eqref{eq:FF1}) of $A_0$, the scattering operator acts as the multiplication by the operator valued function
$$
S(\lambda )=S(\lambda ;A, A_{0};  J ,\Delta):{\goth h}_{0} \to {\goth h}_{0}. 
$$
It means that 
$$
({  F}_{0}{\bf S}(A, A_{0};  J , \Delta)f )(\lambda)  = S(\lambda )
({  F}_{0}f)(\lambda), \q f\in \Ran E_{0}(\Delta) .
$$
The operator $S(\lambda )  $
is defined and is unitary for almost all $\lambda\in\Delta$. It is known as the scattering matrix. The    definition of the scattering matrix depends of course on the choice of the mapping \eqref{eq:DIntF}, but in applications the mapping ${  F}_{0} $ emerges   naturally.

Along with the scattering matrix $S(\lambda)$ corresponding to the scattering operator  
 \eqref{e1}, we consider   the scattering matrix $\wt S(\lambda)$, corresponding
to the scattering operator $\mathbf S(A,A_0;\wt J,\Delta)$ where $\wt J=J A_{0}$. 
Since
\begin{multline*}
\mathbf S(A,A_0;\wt J,\Delta)
=
W_+(A,A_0;\wt J,\Delta)^* W_-(A,A_0;\wt J,\Delta)
\\
=
A_0W_+(A,A_0;J,\Delta)^* W_-(A,A_0;J,\Delta) A_0
=
A_0\mathbf S(A,A_0;J,\Delta) A_0,
\end{multline*}
we have
\begin{equation}
\wt S(\lambda)=\lambda^2 S(\lambda).
\label{e11}
\end{equation} 
Note  that the operators $\wt S(\lambda)$ are not unitary.

\subsection{The stationary representaton for the scattering matrix}
\label{sec.e2}

Our goal here is to obtain a representaton for the scattering matrix $S(\lambda)$ in terms of the resolvent $R(z) =(A-z)^{-1}$ of the operator $A$. 
As before, we set   $Q=Q_0J^*:\calH \to\calH_0$. Recall that, by Theorem~\ref{thm.ccs1},   the operator valued function $G(z)=Q R(z) Q^*$ is H\"older continuous
$($in the operator norm$)$ in $z$ for $\pm \Im z\geq 0$ and 
$\Re z\in\Delta \setminus \sigma_p(A)$. We   also use the notation $Z_{0}(\lambda )=Z_{0}(\lambda;Q_{0})$ for operator \eqref{eq:ZG}. This operator is bounded and depends H\"older continuously on $\lambda\in \Delta$.    Now we
are ready to present the stationary representation of  $S(\lambda)$. 

\begin{theorem}\label{thm.e1}
Let Assumption~$\ref{ass2}$ hold. 
Then for all $\lambda\in\Delta\setminus \sigma_p(A)$ 
the scattering matrix $S(\lambda)$ can be represented as
\begin{equation}
S(\lambda)=I- 2\pi i \lambda^{-1} Z_{0}(\lambda)M^* K  
Z_{0}^*(\lambda) +2\pi i \lambda^{-2} Z_{0}(\lambda)  K ^* G(\lambda+i0) K  
Z_{0}^*(\lambda).
\label{e8}
\end{equation}
The operator $S(\lambda)-I$ is compact and depends H\"older continuously on $\lambda$. 
\end{theorem}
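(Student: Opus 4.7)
The plan is to reduce the computation to the auxiliary identification $\wt J = J A_0$ via the relation \eqref{e11} $\wt S(\lambda) = \lambda^2 S(\lambda)$. For the triple $(A_0, A, \wt J)$ the effective perturbation admits the Kato--Lavine-type factorization
$$
A\wt J - \wt J A_0 \,=\, J T A_0 \,=\, J Q_0^* K Q_0 \,=\, Q^* K Q_0,
$$
combining \eqref{ccs31} with $Q^* = J Q_0^*$. The two smoothness prerequisites are in force: $Q_0$ is strongly $A_0$-smooth on $\Delta$ by Assumption~\ref{ass2}(iv), and $Q = Q_0 J^*$ is Kato-smooth with respect to $A$ on every compact subinterval of $\Delta \setminus \sigma_p(A)$ by Theorem~\ref{limabs} (equivalently, Theorem~\ref{thm.ccs1}(iv)).

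The next step is to split
$$
\mathbf{S}(A, A_0; \wt J, \Delta) \,=\, W_+^* W_- \,=\, W_+^* W_+ \,+\, W_+^* (W_- - W_+).
$$
The first piece is rigorously $A_0^2 E_0(\Delta)$ by \eqref{ccs27}, so it contributes exactly $\lambda^2 I_{\goth h_0}$ to the fibre at $\lambda$. The second piece is handled by the stationary eigenfunction formula for the wave operators,
$$W_\pm F_0^*(\lambda) \,=\, \wt J F_0^*(\lambda) \,-\, R(\lambda \mp i0) Q^* K Z_0^*(\lambda),$$
combined with the standard absorption of the singularity of $R(\lambda+i0)-R(\lambda-i0)$ against the fibre map for $A_0$, which produces the usual factor $-2\pi i$ and reduces the calculation to the two on-shell matrix elements
\begin{align*}
F_0(\lambda) \wt J^* (A\wt J - \wt J A_0) F_0^*(\lambda)
&\,=\, F_0(\lambda) A_0 J^*J Q_0^* K Z_0^*(\lambda) \\
&\,=\, \lambda Z_0(\lambda) M^* K Z_0^*(\lambda),
\end{align*}
using $J^*J Q_0^* = Q_0^* M^*$ (the equivalent form of Assumption~\ref{ass2}(vii)) and $F_0(\lambda) A_0 = \lambda F_0(\lambda)$, together with
$$F_0(\lambda) (A\wt J - \wt J A_0)^* R(\lambda+i0) (A\wt J - \wt J A_0) F_0^*(\lambda) \,=\, Z_0(\lambda) K^* G(\lambda+i0) K Z_0^*(\lambda),$$
read off directly from the definitions of $Q$ and $G$. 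Assembling,
$$\wt S(\lambda) \,=\, \lambda^2 I - 2\pi i\, \lambda Z_0(\lambda) M^* K Z_0^*(\lambda) + 2\pi i\, Z_0(\lambda) K^* G(\lambda+i0) K Z_0^*(\lambda),$$
and dividing by $\lambda^2$ via \eqref{e11} yields \eqref{e8}.

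The regularity claims are then immediate consequences of the representation: $K$ is compact by Assumption~\ref{ass2}(v), $Z_0(\lambda)$ is H\"older continuous of exponent $\gamma$ on compact subintervals of $\Delta$ by Definition~\ref{strsm}, and $G(\lambda+i0)$ is H\"older continuous on $\Delta \setminus \sigma_p(A)$ by Theorem~\ref{thm.ccs1}(iv); hence $S(\lambda)-I$ is compact and H\"older continuous in $\lambda$. The principal obstacle in carrying out this plan is the rigorous justification of the stationary formula, in particular the correct identification of the ``identity piece'' as $\lambda^2 I$ rather than the naive $F_0 \wt J^*\wt J F_0^*|_\lambda$, which would differ by the compact correction $Z_0(\lambda) \wt K Z_0^*(\lambda)$ coming from Assumption~\ref{ass2}(viii). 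This apparent discrepancy is precisely what is resolved by invoking the rigorous identity \eqref{ccs27}: the isometric piece $W_+^* W_+$ absorbs the $\wt K$-correction through the (non-trivial) completeness statement rather than leaving it in the T-matrix part, which is why the formula \eqref{e8} contains $M^*$ via the Born term but no separate $\wt K$ contribution.
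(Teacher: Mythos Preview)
Your proposal is correct and follows essentially the same route as the paper: both pass to the auxiliary identification $\wt J=JA_0$, invoke the standard stationary representation for $\wt S(\lambda)$ (the paper by citing \cite{Yafaev} directly, you via the splitting $W_+^*W_-=W_+^*W_+ + W_+^*(W_--W_+)$), compute the two on-shell pieces exactly as you do, and then divide by $\lambda^2$ via \eqref{e11}. The only cosmetic difference is that the paper obtains the $\lambda^2 I$ contribution through the auxiliary operator $\Omega=W_+(A_0,A_0;\wt J^*\wt J,\Delta)$ rather than through \eqref{ccs27}; both equal $A_0^2 E_0(\Delta)$ for the same reason (Assumption~\ref{ass2}(viii) and Lemma~\ref{iso}), so your remark about the $\wt K$-correction being absorbed is precisely the point, and your derivation of the regularity statements matches the paper's.
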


We emphasize that all operators in the r.h.s.\  of \eqref{e8} are bounded and depend H\"older continuously on $\lambda\in\Delta\setminus \sigma_p(A)$.

For the proof,  we use the fact that all the assumptions 
of the general stationary scheme (see  \cite{BY, Yafaev})  are satisfied
for the triple $A_0$, $A$, $\wt J$ (but not for $A_0$, $A$, $J$). 
Therefore we can apply the standard stationary representation for $\wt S(\lambda)$ 
(see Proposition~1 of \S~7.4 in  \cite{Yafaev}). We recall this representation at  a somewhat heuristic level. It is convenient to use a formal notation
$\Gamma_{0}(\lambda) :{\cal H}_0 \to {\goth h}_{0}$   defined by the equality
\begin{equation}
\Gamma_{0}(\lambda)f_{0}= 
( F_{0} f)(\lambda), 
 \q f\in \Ran E_{0}(\Delta) , \q \lambda\in\Delta. 
\label{e9}
\end{equation}
Observe that the operator
$$
Z_0(\lambda)=\Gamma_0(\lambda)Q_0^*
$$
is correctly defined by equality \eqref{eq:ZG}. In view of \eqref{bb2} we have
\begin{equation}
\wt{V} := A\wt{J}-\wt{J}A_{0} = JTA_0 .
\label{e7}
\end{equation}
We further observe that the auxiliary wave operator $\Omega=W_+(A_0,A_0;\wt J^*\wt J,\Delta)$ exists. It commutes with the operator $A_{0}$ and hence acts as the multiplication by the operator valued function $\Omega(\lambda)$ in the spectral representation of $A_{0}$.
 Then the representation for $\wt S(\lambda)$ formally reads as
$$
\wt S(\lambda)
=
\Omega(\lambda)  -2\pi i \Gamma_0(\lambda)
(\wt J^* \wt{V}- \wt{V}^*R(\lambda+i0)\wt{V} )
\Gamma_0^*(\lambda) .
$$

Let us show that all the terms in the r.h.s.\  are correctly defined. First, we observe that under   Assumption~\ref{ass2}(viii) $\wt J^*\wt J-A_{0}^2 \in {\goth S}_{\infty}$ so that the operator $\Omega$ exists and $\Omega=A_{0}^2 E_{0}(\Delta)$. It follows that $\Omega(\lambda)=\lambda^2 I$. 
Then we use the fact that according to \eqref{e7} and Assumption~\ref{ass2}(v, vii)
$$
\wt J^* \wt{V}=A_{0} J^* JTA_0 = A_{0}  Q_{0}^* M^* K  Q_{0}
$$
whence
$$
\Gamma_0(\lambda)\wt J^* \wt{V} \Gamma_0^*(\lambda)
= 
\lambda Z_{0}(\lambda)M^* K Z_{0}^*(\lambda).
$$
Finally, according to \eqref{e7}  and  Assumption~\ref{ass2}(v) we have $\wt{V}=Q^* K Q_{0}^*$ so that
$$
\Gamma_0(\lambda) \wt{V}^* R(\lambda+i0)\wt{V} \Gamma_0^*(\lambda)
=
Z_{0}(\lambda)  K ^* G(\lambda+i0) K  
Z_{0}^*(\lambda).
$$
Now we are in the position to formulate the  precise result.

\begin{lemma}\label{lem.e1}
Let Assumption~$\ref{ass2}$ hold. 
Then for all $\lambda\in\Delta\setminus \sigma_p(A)$ 
the scattering matrix $\wt S (\lambda)$ can be represented as
\begin{equation}
\wt S(\lambda)= \lambda^2 I-2\pi i \lambda Z_{0}(\lambda)M^* K  
Z_{0}^*(\lambda) +2\pi i Z_{0}(\lambda)  K^* G(\lambda+i0) K 
Z_{0}^*(\lambda).
\label{e8x}\end{equation}
\end{lemma}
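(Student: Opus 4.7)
The strategy is to apply the general stationary representation of the scattering matrix (Proposition~1 of \S7.4 in \cite{Yafaev}) to the triple $(A_0,A,\wt J)$ and then simplify the three resulting terms using the special structure coming from Assumption~\ref{ass2}. Crucially, the standard scheme applies to $\wt J$, not to $J$, because the effective perturbation
$$
\wt V=A\wt J-\wt J A_0=JTA_0
$$
admits, thanks to \eqref{bb2} and \eqref{ccs31}, the factorisation $\wt V=Q^*K Q_0$ with $Q=Q_0J^*$, and we already have all the smoothness needed on both factors.

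First I would check that the hypotheses of the stationary scheme are in place. The operator $Q_0$ is strongly, hence Kato, $A_0$-smooth on compact subintervals of $\Delta$ by Assumption~\ref{ass2}(iv); the operator $Q$ is $A$-smooth on compact subintervals of $\Delta\setminus\sigma_p(A)$ by the limiting absorption principle of Theorem~\ref{limabs}. The auxiliary wave operator $\Omega=W_+(A_0,A_0;\wt J^*\wt J,\Delta)$ exists: Assumption~\ref{ass2}(viii) yields $\wt J^*\wt J-A_0^2\in\goth S_\infty$, so by Lemma~\ref{iso} applied with $\varphi(\lambda)=\lambda^2$ we have $\Omega=A_0^2E_0(\Delta)$, and its fibre is multiplication by $\lambda^2$. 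Thus $\Omega(\lambda)=\lambda^2 I$, which will be the ``free'' term in the representation.

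Next I would compute the two perturbative terms. For the first one, use \eqref{bb2} and Assumption~\ref{ass2}(v,vii) to rewrite
$$
\wt J^*\wt V=A_0J^*JTA_0=A_0Q_0^*M^*K Q_0,
$$
where I have used that $J^*J Q_0^{-1}=Q_0^{-1}M$ (i.e.\ $Q_0 J^*J=MQ_0$) and that $TA_0=Q_0^*K Q_0$. Sandwiching by $\Gamma_0(\lambda)$ and $\Gamma_0^*(\lambda)$ and using the intertwining $\Gamma_0(\lambda)A_0=\lambda\Gamma_0(\lambda)$ together with $\Gamma_0(\lambda)Q_0^*=Z_0(\lambda)$, this term becomes $\lambda Z_0(\lambda)M^*K Z_0^*(\lambda)$. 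For the second one, write $\wt V=Q^*K Q_0^*$, so that
$$
\wt V^*R(\lambda+i0)\wt V=Q_0K^*Q R(\lambda+i0)Q^*K Q_0^*=Q_0K^*G(\lambda+i0)K Q_0^*,
$$
and the same sandwiching by $\Gamma_0(\lambda),\Gamma_0^*(\lambda)$ produces $Z_0(\lambda)K^*G(\lambda+i0)K Z_0^*(\lambda)$. All three terms are H\"older continuous in $\lambda\in\Delta\setminus\sigma_p(A)$ because $Z_0$ is (by strong $A_0$-smoothness) and $G$ is (by Theorem~\ref{thm.ccs1}(iv)).

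Assembling the pieces gives
$$
\wt S(\lambda)=\lambda^2 I-2\pi i\lambda\,Z_0(\lambda)M^*K Z_0^*(\lambda)+2\pi i\,Z_0(\lambda)K^*G(\lambda+i0)K Z_0^*(\lambda),
$$
which is exactly \eqref{e8x}. The only place where real care is needed is the justification that the general stationary formula of \cite{BY,Yafaev} is legitimately applied here: one must verify that all the boundary values appearing in the abstract formula make sense and coincide with the continuous values produced by Theorem~\ref{limabs}, and that the formal manipulations with $\Gamma_0(\lambda)$ (an object defined only in the weak/trace sense on $\calH_0$) are justified by the strong smoothness of $Q_0$, which ensures that $\Gamma_0(\lambda)Q_0^*=Z_0(\lambda)$ is a genuine bounded operator. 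Once these verifications are in place, the computation above is essentially algebraic.
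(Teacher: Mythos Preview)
Your proposal is correct and follows essentially the same approach as the paper: apply the stationary representation from \cite{Yafaev} to the triple $(A_0,A,\wt J)$, identify $\Omega(\lambda)=\lambda^2 I$ via $\wt J^*\wt J-A_0^2\in\goth S_\infty$, and simplify the two perturbative terms using $\wt V=JTA_0=Q^*KQ_0$ together with Assumption~\ref{ass2}(v,vii). One small slip: in your second perturbative term you write $\wt V=Q^*KQ_0^*$ with a spurious star on the last factor (contradicting your own earlier line $\wt V=Q^*KQ_0$), but the final expression $Z_0(\lambda)K^*G(\lambda+i0)KZ_0^*(\lambda)$ is nonetheless correct.
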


The representation \eqref{e8} for $S(\lambda)$ directly follows from \eqref{e11} and \eqref{e8x}.
Since $Z_{0}(\lambda)$ and $G(\lambda+i0) $ depend    H\"older continuously on $\lambda$, the same is true for $S(\lambda)$. Finally, 
 the operator $S(\lambda)-I$ is compact  because by Assumption~\ref{ass2}(v) the operator $K $ is compact. 
 
\subsection{Wave operators}\label{sec.d4}

Here we briefly discuss stationary representations of the wave operators. These representations are equivalent to the expansion over appropriate generalised eigenfunctions of the operator $A$. 

Since all the assumptions of the stationary scheme of scattering theory 
are satisfied for the triple $A$, $A_0$, $\wt J$, we can directly apply 
Theorem~5.6.1 of \cite{Yafaev} to this triple. 
We use notation   \eqref{e9}  and formally set 
\begin{equation}
\wt\Gamma_\pm(\lambda)f
=
\Gamma_0(\lambda)(\wt{J}^*-  \wt{V}^* R(\lambda\pm i0))f
\label{eq:Ga}\end{equation}
for $f\in\Ran Q^*$. Similarly to the previous subsection, under Assumption~\ref{ass2} this formula acquires the correct meaning. Indeed, Assumption~\ref{ass2}(vii) shows that
$$
\Gamma_0(\lambda) \wt{J}^* Q^*= \lambda \Gamma_0(\lambda) J^*J  Q_{0}^*= \lambda \Gamma_0(\lambda) Q_{0}^* M^* =  \lambda Z_0(\lambda)   M^*
$$
and, in view of equality \eqref{e7},  Assumption~\ref{ass2}(v) shows that
\begin{align*}
\Gamma_0(\lambda) \wt{V}^* & R(\lambda\pm i0) Q^*
=   
\Gamma_0(\lambda) A_{0}T^*J^*R(\lambda\pm i0) Q^*
\\
& =   
\Gamma_0(\lambda) Q_{0}^* K ^* Q_{0}J^*R(\lambda\pm i0) Q^*
=  
Z_0(\lambda)  K ^*  G(\lambda\pm i0) .
\end{align*}
Therefore     representation \eqref{eq:Ga} can be rewritten in terms of bounded operators as
\begin{equation}
\wt\Gamma_\pm(\lambda)f
=
\lambda Z_0(\lambda)   M^*g -   Z_0(\lambda)  K ^*  G(\lambda\pm i0) g, \q f= Q^*g .
\label{eq:Gb}\end{equation}

Now we put 
$$
(\wt F_\pm f)(\lambda)
=
\wt \Gamma_\pm(\lambda)f,
\quad f\in\Ran Q^* .
$$
According to Theorem~5.6.1 of \cite{Yafaev} the operators 
$  \wt F_\pm $
extend to bounded operators from $\cal H$ to $L^2(\Delta,\goth h_0)$   and diagonalize the operator $A$: $ (\wt{F}_{\pm} Af)(\lambda) = \lambda (\wt{F}_{\pm}f)(\lambda)$.  They are related to the wave operators by the formula $ W_\pm(A,A_0;\wt J,\Delta)=\wt F_\pm^* F_0$. 
  
It remains to replace the identification $\wt J$ by $J$.
We again formally set
\begin{equation}
\Gamma_\pm(\lambda)f
=
\Gamma_0(\lambda)(J^*-   T^* J^* R(\lambda\pm i0))f
\label{eq:Gax}\end{equation}
for $f\in \Ran Q^*$. Since $\wt{J}=J A_{0}$ and $\wt{V}=J T A_{0}$, comparing \eqref{eq:Ga} and \eqref{eq:Gax} we see that $\wt\Gamma_\pm(\lambda)=\lambda \Gamma_\pm(\lambda)$. Therefore using
\eqref{eq:Gb},  we can rewrite \eqref{eq:Gax} in terms of bounded operators as
\begin{equation}
 \Gamma_\pm(\lambda)f
=
 Z_0(\lambda)   M^*g -  \lambda^{-1} Z_0(\lambda)  K ^*  G(\lambda\pm i0) g,
  \q f=Q^* g .
\label{eq:Gbx}\end{equation}
 Thus, the operators $\Gamma_\pm(\lambda): \calH\to\goth h_0$ are well defined on the dense set $\Ran Q^*$ and,  for $f\in\Ran Q^*$,  the vector valued functions $\Gamma_\pm(\lambda)f$  depend H\"older continuously on $\lambda\in\Delta\setminus \sigma_p(A)$.  Using \eqref{ccs29}, we can now rephrase the results about the wave operators $W_{\pm}(A,A_0;\wt J,\Delta)$ in terms of the wave operators $W_{\pm}(A,A_0;J,\Delta)$. This yields the following result.

\begin{theorem}\label{WOst}
    Let Assumption~$\ref{ass2}$  hold.  Define the operators $\Gamma_\pm(\lambda)$ by equation \eqref{eq:Gax} $($or, more precisely, by \eqref{eq:Gbx}$)$ and set
$$
(F_{\pm} f)(\lambda)=\Gamma_{\pm} (\lambda)f , \q f\in \Ran Q^*.
$$
Then $F_{\pm}$ extends to the partial isometry $F_{\pm}: {\cal H} \to L^2 (\Delta;  {\goth h}_{0})$ with the initial 
space $\Ran E^{\ac}(\Delta)$, and the intertwining property
$$
(F_{\pm} Af)(\lambda) = \lambda (F_{\pm}f)(\lambda)
$$ 
is satisfied. 
The operators $F_{\pm}$ and the wave operators are related by the equality
$$
 W_{\pm}(A,A_0;J,\Delta)= F_{\pm}^* F_0.
$$
Moreover, for $f_{0}\in \Ran Q_{0}^*$, $f \in \Ran Q ^*$, we have the representation
$$
( W_{\pm}(A,A_0;J,\Delta) f_{0}, f)
=     \int _{\Delta} {\pmb \langle} \Gamma_{0}(\lambda) f_{0}, \Gamma_{\pm}(\lambda) f    {\pmb \rangle} d\lambda
$$
   where $ {\pmb \langle} \cdot, \cdot    {\pmb \rangle}$ is the inner product in ${\goth h}_{0}$.
     \end{theorem}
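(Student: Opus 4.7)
The plan is to deduce Theorem~\ref{WOst} from the general stationary scheme of smooth scattering theory (Theorem~5.6.1 of \cite{Yafaev}) applied to the triple $A_{0}, A, \wt J$ with $\wt J = JA_{0}$, and then to convert the resulting assertions into the corresponding statements for the triple $A_{0}, A, J$ by exploiting the relation $\wt\Gamma_\pm(\lambda)=\lambda\,\Gamma_\pm(\lambda)$ already noted in the excerpt.

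First I would verify that Yafaev's stationary scheme applies to the triple $A_{0}, A, \wt J$. The required ingredients are all in place: the factorisation $\wt V = A\wt J -\wt J A_{0} = Q^{*}KQ_{0}$ has been recorded in \eqref{e7}; the operator $Q_{0}$ is $A_{0}$-smooth on every compact subinterval of $\Delta$ by Assumption~\ref{ass2}(iv) via Proposition~\ref{Privalov}; and the operator $Q=Q_{0}J^{*}$ is $A$-smooth on every compact subinterval of $\Delta\setminus\sigma_{p}(A)$ by Theorem~\ref{limabs}. Yafaev's scheme then produces a bounded operator $\wt F_\pm:\calH\to L^{2}(\Delta;\goth h_{0})$ which on the dense set $\Ran Q^{*}$ is given by $(\wt F_\pm f)(\lambda)=\wt\Gamma_\pm(\lambda)f$, satisfies the intertwining property $\wt F_\pm A=M_\lambda \wt F_\pm$ where $M_\lambda$ is the operator of multiplication by $\lambda$ on $L^{2}(\Delta;\goth h_{0})$, and verifies $W_\pm(A,A_{0};\wt J,\Delta)=\wt F_\pm^{*}F_{0}$. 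Combining this last identity with \eqref{ccs27} and \eqref{ccs28} yields the two crucial relations $\wt F_\pm^{*}\wt F_\pm = A^{2}E^{\ac}(\Delta)$ and $\wt F_\pm\wt F_\pm^{*}=M_{\lambda^{2}}$.

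Since $0\notin\overline\Delta$, the operator $M_\lambda$ is boundedly invertible on $L^{2}(\Delta;\goth h_{0})$, so I would define $F_\pm:=M_\lambda^{-1}\wt F_\pm$; the identity $\wt\Gamma_\pm(\lambda)=\lambda\Gamma_\pm(\lambda)$ shows that this agrees with the formula $(F_\pm f)(\lambda)=\Gamma_\pm(\lambda)f$ on $\Ran Q^{*}$. Cancelling the invertible $M_\lambda$ on the left in $\wt F_\pm A=M_\lambda\wt F_\pm$ gives the desired intertwining $F_\pm A=M_\lambda F_\pm$, while cancelling $M_\lambda$ twice in $\wt F_\pm\wt F_\pm^{*}=M_{\lambda^{2}}$ gives $F_\pm F_\pm^{*}=I$. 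For the partial isometry property, I observe first that $\wt F_\pm^{*}\wt F_\pm=A^{2}E^{\ac}(\Delta)$ forces $\wt F_\pm=\wt F_\pm E^{\ac}(\Delta)$ (by evaluating the norm of $\wt F_\pm$ on vectors in $\Ran (I-E^{\ac}(\Delta))$), and consequently $F_\pm=F_\pm E^{\ac}(\Delta)$, so $\Ran F_\pm^{*}\subset\Ran E^{\ac}(\Delta)$. Combining $F_\pm^{*}M_\lambda^{2}F_\pm=\wt F_\pm^{*}\wt F_\pm=A^{2}E^{\ac}(\Delta)$ with the intertwining identity $F_\pm^{*}M_\lambda^{2}=A^{2}F_\pm^{*}$ gives $A^{2}F_\pm^{*}F_\pm=A^{2}E^{\ac}(\Delta)$, and since $A^{2}$ is injective on $\Ran E^{\ac}(\Delta)$ (again because $0\notin\overline\Delta$) I conclude $F_\pm^{*}F_\pm=E^{\ac}(\Delta)$.

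Finally, the relation $W_\pm(A,A_{0};J,\Delta)=F_\pm^{*}F_{0}$ follows from \eqref{ccs29} and the diagonalisation identity $F_{0}A_{0}=M_\lambda F_{0}$: one has $W_\pm(A,A_{0};J,\Delta)A_{0}=W_\pm(A,A_{0};\wt J,\Delta)=\wt F_\pm^{*}F_{0}=F_\pm^{*}M_\lambda F_{0}=F_\pm^{*}F_{0}A_{0}$; both $W_\pm(A,A_{0};J,\Delta)$ and $F_\pm^{*}F_{0}$ annihilate $\Ker A_{0}$ (the latter because $F_{0}=F_{0}E_{0}(\Delta)$ and $E_{0}(\Delta)$ kills $\Ker A_{0}$ since $0\notin\Delta$), so the identity extends from $\overline{\Ran A_{0}}$ to all of $\calH_{0}$. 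The bilinear representation of $W_\pm$ is then an immediate unwrapping of the definitions of $F_{0}$ and $F_\pm$. The main obstacle I anticipate is the bookkeeping required in the partial-isometry step, where one has to reconcile the relation $\wt F_\pm^{*}\wt F_\pm=A^{2}E^{\ac}(\Delta)$ (involving $A^{2}$) with the cleaner target $F_\pm^{*}F_\pm=E^{\ac}(\Delta)$; verifying that $F_\pm$ acts trivially off $\Ran E^{\ac}(\Delta)$ is the key step that makes this reconciliation work.
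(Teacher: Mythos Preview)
Your proposal is correct and follows the same route as the paper: apply Theorem~5.6.1 of \cite{Yafaev} to the triple $A_{0},A,\wt J$, then use $\wt\Gamma_\pm(\lambda)=\lambda\Gamma_\pm(\lambda)$ together with \eqref{ccs29} to pass from $\wt J$ to $J$. You supply more detail on the partial-isometry bookkeeping than the paper (which leaves this step implicit, relying on Theorem~\ref{thm.scat}), but the substance is identical.
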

      
\subsection{The multichannel case}\label{sec.d5}

Of course under Assumption~\ref{assu} all the results of this section are true 
for the operators $A_{0}$, $A$ and $J$ defined by \eqref{1.2a}, \eqref{1.2b} and \eqref{1.2e}. 
Let $A_{j}$, $j=1,\ldots, N$,  be realized as the operator of multiplication by   independent variable $\lambda$ in the space $L^2 (\Delta)\otimes{\goth h}_{j}$ where $\dim {\goth h_j}$ is the multiplicity of the spectrum of the operator $A_{j}$ on the interval $\Delta$. Then the scattering matrix $S(\lambda)$ is given by the matrix $S_{\ell, j}(\lambda): {\goth h_j}\to  {\goth h_k}$. Theorem~\ref{thm.e1}, in particular, shows that the operators $S_{j,j}(\lambda)-I$ and $S_{\ell, j}(\lambda) $ for $\ell\neq j$ are compact.

\appendix{}

\section{Faddeev's equations}

Let us show that Faddeev's equations for three interacting quantum particles follow from 
the resolvent equation \eqref{bb3} for a particular choice of the operators $A_{0}$, $A$ and $J$. Actually, we consider a slightly more general situation. 

Let a self-adjoint operator $H$ in a Hilbert space $\cal H$ admit the representation
\begin{equation}
H=H_{0}+\sum_{j=1}^N V_{j}.
\label{eq:QM1}\end{equation} 
 We suppose that the   operator $H_{0}$ is self-adjoint and set ${\cal R}_{0}(z)= (H_{0}-z)^{-1}$. For simplicity, we assume that all operators $V_{j}$  (but not $H_{0}$) are bounded. Our main assumption is that  
\begin{equation}
  V_j   {\cal R}_{0} (z)  V_k   \in {\goth S}_{\infty}, \q j, k =1, \ldots, N, \;  j\neq k,\q\Im z \neq 0.
\label{eq:QMcomp}
\end{equation}
In the three-particle problem $H$ is the Schr\"odinger operator, $H_{0}$ is the operator of the kinetic energy of three particles with the center-of-mass motion removed; $V_j$, $j=1,2,3$,  are potential energies of pair interactions of  particles (for example, $V_{1}$ is  the potential energy of   interaction  of the second and third particles).  

 We introduce the Hilbert space 
$\cal H_{0}=  \calH^{N}$
as the direct sum 
of $N$ copies of the space $\calH$.
The elements of this space are   columns
${\bf f}=( f_1,\dots,f_N)^\top$.
We define the operator $A_0$ in this space as
\begin{equation}
A_0=\diag\{  H_{1},\ldots, H_{N}\} \q {\rm where} \q H_{j}=H_{0}+ V_{j}
\label{eq:diag}
\end{equation}
and the operator $J:\calH_0\to\calH$ by 
\begin{equation}
J {\bf f} =\sum_{j=1}^N f_{j}.
\label{eq:QMJ}
\end{equation}
We set $A=H$.

Since
$$
(AJ -J A_{0}) {\bf f} =\sum_{j,k=1; j\neq k}^N V_{k} f_{j}
$$
 factorization \eqref{bb2}  is now true with the operator
  $T:\calH_0\to\calH_0$ acting by the formula
\begin{equation}
T=  \begin{pmatrix}
0& V_1& V_1&\ldots& V_1
\\
V_2& 0& V_2&\ldots&  V_2
 \\
 \vdots&  \vdots&\vdots& \ddots&\vdots &
  \\
 V_{N}& V_{N} & V_N&\ldots  & 0
 \end{pmatrix}.
\label{eq:QMAA}\end{equation}

Let us check Assumption~\ref{ass2}(ii).
Comparing equations \eqref{eq:diag} and \eqref{eq:QMAA} we find that 
$$
A_0 + T=
\begin{pmatrix}
H_{1}& V_1& V_1&\ldots& V_1
\\
V_2& H_{2} & V_2&\ldots&  V_{2}
 \\
 \vdots&  \vdots&\vdots& \ddots&\vdots &
  \\
 V_{N}& V_{N} & V_N&\ldots  & H_{N} 
\end{pmatrix}
$$
and hence
$$
(A_0+T){\bf f}= (H_{0}f_{1}+ V_{1} J {\bf f}, H_{0}f_{2}+V_{2} J {\bf f} ,\ldots, 
H_{0}f_{N}+V_{N} J {\bf f} )^\top.
$$
Thus, if $J {\bf f}=0$, then
$$
(A_0+ T-z){\bf f}= ((H_{0}-z)f_{1} , (H_{0}-z)f_{2} ,\ldots, 
(H_{0}-z)f_{N} )^\top.
$$
Since the operator $H_{0}$ is self-adjoint, the equality $(A_0+T-z){\bf f}=0$ implies that
$ f_{j}=0$ for all $j=  1, 2,\ldots, N$.

Next, we check inclusion \eqref{ccs3} for $p=2$. Set ${\cal R}_{j}(z) =( H_{j}-z)^{-1}$. It follows from \eqref{eq:QMAA} that
$$
T R_{0}(z)=  \begin{pmatrix}
0& V_1 {\cal R}_2 (z)& V_1 {\cal R}_3 (z)&\ldots& V_1 {\cal R}_N (z)
\\
V_2 {\cal R}_1 (z)& 0& V_2 {\cal R}_3 (z)&\ldots&  V_2 {\cal R}_N (z)
\\
\vdots&  \vdots&\vdots& \ddots&\vdots &
\\
V_N {\cal R}_1 (z)& V_N {\cal R}_2 (z) & V_N {\cal R}_3 (z)&\ldots  & 0
\end{pmatrix}.
$$
Therefore the operator $(T R_{0}(z))^2$ is given by the $N\times N$ matrix with elements
$$
V_{j} {\cal R}_k (z) V_k {\cal R}_\ell (z), \q j\neq k.
$$
By the resolvent identity applied to the pair $H_{0}$, $H_{k}$, we have
$$
V_{j} {\cal R}_k  V_k    = ( V_{j} {\cal R}_0 V_k   )
(I-     {\cal R}_k V_k  ),
$$
and hence this operator is compact by assumption \eqref{eq:QMcomp}.

 Thus Lemma~\ref{FRed} implies the following result.

\begin{theorem}\label{FRF}
Let   the operators $A_{0}$ and $A=H$ be defined by formulas \eqref{eq:diag} and \eqref{eq:QM1}, and let the operator $J$ be given by formula \eqref{eq:QMJ}. Then under assumption \eqref{eq:QMcomp} the resolvent $R(z)=(H-z )^{-1}$ of the operator $H$ admits the representation  \eqref{ccs4}
where the inverse operator on the right exists and is bounded.
\end{theorem}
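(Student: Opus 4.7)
The plan is to apply Lemma~\ref{FRed} directly to the triple $(A_0,A,J)$ and the operator $T$ defined by \eqref{eq:diag}, \eqref{eq:QM1}, \eqref{eq:QMJ}, \eqref{eq:QMAA}. The factorisation \eqref{bb2} has already been checked in the discussion preceding the theorem, so all that remains is to verify Assumption~\ref{ass2}(ii) (for $\Im z\neq 0$, which is what Lemma~\ref{FR11} actually uses) together with the compactness condition \eqref{ccs3} for some $p$.

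For Assumption~\ref{ass2}(ii) I would compute the matrix $A_0+T$ explicitly: its diagonal blocks are $H_j=H_0+V_j$, while each off-diagonal entry in row $j$ is $V_j$. The $j$-th component of $(A_0+T)\mathbf{f}$ is therefore $H_0 f_j + V_j J\mathbf{f}$. Assuming $J\mathbf{f}=0$, the equation $(A_0+T-z)\mathbf{f}=0$ reduces componentwise to $(H_0-z)f_j=0$, and for $\Im z\neq 0$ the self-adjointness of $H_0$ forces $f_j=0$ for every $j$.

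For the compactness condition I would take $p=2$ and compute $(TR_0(z))^2$ block-by-block. Since the diagonal of $T$ vanishes, its $(j,\ell)$-entry is
$$
\sum_{k\neq j,\, k\neq \ell} V_j {\cal R}_k(z) V_k {\cal R}_\ell(z).
$$
The sandwich $V_j {\cal R}_k V_k$ is not directly covered by hypothesis \eqref{eq:QMcomp}, which only controls products involving the free resolvent ${\cal R}_0$. To bridge this gap I would invoke the second resolvent identity for the pair $(H_0,H_k)$, namely ${\cal R}_k={\cal R}_0(I-V_k{\cal R}_k)$, obtaining
$$
V_j {\cal R}_k V_k=(V_j {\cal R}_0 V_k)(I-{\cal R}_k V_k).
$$
Since $j\neq k$, the first factor lies in ${\goth S}_\infty$ by \eqref{eq:QMcomp}, the second factor is bounded, and right-multiplication by ${\cal R}_\ell(z)$ preserves compactness. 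Thus every entry of $(TR_0(z))^2$ is compact, and $(TR_0(z))^2\in{\goth S}_\infty$.

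With both hypotheses in hand, Lemma~\ref{FRed} immediately delivers the representation \eqref{ccs4} together with the boundedness of the inverse. The genuine conceptual obstacle is that $TR_0(z)$ itself need not be compact: one must iterate the operator precisely once so that, after applying the resolvent identity, every matrix entry exhibits the combination $V_j {\cal R}_0 V_k$ with $j\neq k$. This is exactly the compactness mechanism underlying Faddeev's original equations, which explains why the abstract resolvent identity \eqref{bb3} specialises to that classical setting.
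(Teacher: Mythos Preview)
Your proposal is correct and follows the paper's proof essentially verbatim: the paper verifies Assumption~\ref{ass2}(ii) by the same componentwise reduction to $(H_0-z)f_j=0$, and it checks \eqref{ccs3} for $p=2$ via exactly the same resolvent identity $V_j{\cal R}_kV_k=(V_j{\cal R}_0V_k)(I-{\cal R}_kV_k)$ that you use. In the paper these verifications are carried out in the text immediately preceding Theorem~\ref{FRF}, which is then stated as a direct consequence of Lemma~\ref{FRed}.
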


Note that equation \eqref{bb3} in the case considered is equivalent to the system of Faddeev's equations. Indeed, applying both sides of \eqref{bb3} to an element ${\bf f}=(f_{1},\ldots, f_{N})^\top$, we find that 
$$
  R (z)\big( \sum_{k=1}^N f_{k}+    \sum_{j,k=1; j \neq k}^N V_{j} {\cal R}_{k} (z) f_{k}\big)= \sum_{k=1}^N {\cal R}_{k} (z) f_{k}.
$$
Since elements $f_{k}$ are arbitrary, this leads to a system of $N$ equations
\begin{equation}
R (z)\big( I+    \sum_{j =1; j \neq k}^N V_{j} {\cal R}_{k} (z)  \big)=  {\cal R}_{k} (z)  ,\q k=1,\ldots, N,
\label{eq:QMAA5}
\end{equation}
for the same object $R(z)$. We note that each of  equations \eqref{eq:QMAA5} is simply  the resolvent equation for the pair $H_{k}$, $H$. It determines the resolvent uniquely, but the operators $V_{j} {\cal R}_{k} (z)$ are not of course compact. Nevertheless, considered together, equations \eqref{eq:QMAA5}  yield the   Fredholm system.

Applying to \eqref{eq:QMAA5} on the right the operator $V_{k}$ and setting $Y_{k}(z)=R(z) V_{k}$, we obtain the system of $N$ equations  
$$
  Y_{k} (z) +    \sum_{j =1; j \neq k}^N Y_{j}(z) {\cal R}_{k} (z)V_{k}  =   {\cal R}_{k} (z) V_{k} ,\q k=1,\ldots, N,
$$
for $N$ operators $Y_{k} (z) $. 
This system was derived and used by L.~D.~Faddeev \cite{Fadd}
in the study of three particle quantum systems.

\section*{Acknowledgements} 
Our collaboration has become possible through the hospitality and financial support 
of the Departments of Mathematics of the University of Rennes 1 and of KingÕs College London. The second author was partially supported by the projects NONa (ANR-08-BLANC-0228) and NOSEVOL (ANR-11-BS0101901).


\end{document}